\theoremstyle{plain}                    %
\newtheorem{theorem}{Theorem}           %
\newtheorem{lemma}{Lemma}               %
\theoremstyle{definition}               %
\newtheorem{problem}{Problem}           %
\theoremstyle{remark}                   %
\newtheorem{remark}{Remark}             %
\newif\ifFirstPar       \FirstParfalse                                     %
\DeclareRobustCommand\SMC{
  \ifx\@currsize\normalsize\small\else                                     %
   \ifx\@currsize\small\footnotesize\else                                  %
    \ifx\@currsize\footnotesize\scriptsize\else                            %
     \ifx\@currsize\large\normalsize\else                                  %
      \ifx\@currsize\Large\large\else                                      %
       \ifx\@currsize\LARGE\Large\else                                     %
        \ifx\@currsize\scriptsize\tiny\else                                %
         \ifx\@currsize\tiny\tiny\else                                     %
          \ifx\@currsize\huge\LARGE\else                                   %
           \ifx\@currsize\Huge\huge\else                                   %
            \small\SMC@unknown@warning                                     %
 \fi\fi\fi\fi\fi\fi\fi\fi\fi\fi                                            %
}                                                                          %
\newcommand\SMC@unknown@warning{\TBWarning{\string\SMC: unrecognised       %
    text font size command -- using \string\small}}                        %
\newcommand\textSMC[1]{{\SMC #1}}                                          %
\newcommand\acro[1]{\textSMC{#1}\@}                                        %
\def\nth#1{
    \def\reserved@a##1##2\@nil{\ifcat##1n
           0
   \let\reserved@b\ensuremath                                              %
      \else##1##2
   \let\reserved@b\relax                                                   %
      \fi}
    \TestCount=\reserved@a#1\@nil\relax                                    %
    \ifnum\TestCount <0 \multiply\TestCount by\m@ne \fi 
    \T@stCount=\TestCount                                                  %
    \divide\T@stCount by 100 \multiply\T@stCount by 100                    %
    \advance\TestCount by-\T@stCount     
    \ifnum\TestCount >20 \T@stCount=\TestCount                             %
      \divide\T@stCount by 10 \multiply\T@stCount by 10                    %
      \advance\TestCount by-\T@stCount   
    \fi                                                                    %
     \reserved@b{#1}
       \textsuperscript{\ifcase\TestCount th
                        \or   st
                        \or   nd
                        \or   rd
                        \else th
                        \fi}
     }                                                                     %
\newcommand{\nsd}[1]{\ensuremath{\mathrm{d}{#1}}}              %
\renewcommand{\d}[1]{\ensuremath{\,\mathrm{d}{#1}}}            %
\newcommand{\bv}[1]{\boldsymbol{#1}}                           %
\newcommand{\tensor}[1]{\mathsf{#1}}                           %
\newcommand{\trans}[1]{#1^{\mathrm{T}}}                        %
\newcommand{\invtrans}[1]{\ensuremath{#1^{-\mathrm{T}}}}       %
\newcommand{\f}{\mathrm{f}}                                    %
\newcommand{\s}{\mathrm{s}}                                    %
\newcommand{\sv}{\ensuremath{1}}                               %
\newcommand{\svs}{\ensuremath{\alpha}}                         %
\newcommand{\sq}{\ensuremath{2}}                               %
\newcommand{\sqs}{\ensuremath{\beta}}                          %
\newcommand{\sy}{\ensuremath{3}}                               %
\newcommand{\sys}{\ensuremath{\gamma}}                         %
\newcommand{\map}[1][Blank]{\ifthenelse{\equal{#1}{Blank}}
{\ensuremath{\bv{\zeta}(\bv{s},t)}}{\bv{s} + \bv{#1}(\bv{s})}} %
\newcommand{\ucdot}{\!\cdot\!}                                 %
\newcommand{\dualV}[1]{\prescript{}{\mathscr{V}^{*}}
{\bigl\langle} #1 \big\rangle_{\mathscr{V}}}                   %
\newcommand{\dualY}[1]{\prescript{}{\mathscr{H}_{Y}^{*}}
{\bigl\langle} #1 \big\rangle_{\mathscr{H}_{Y}}}               %
\DeclareMathOperator{\ldiv}{div}        %
\DeclareMathOperator{\grad}{grad}       %
\DeclareMathOperator{\trace}{tr}        %
\DeclareMathOperator{\vssp}{span}       %
\journal{Computer Methods in Applied Mechanics and Engineering}
\begin{document}
\begin{frontmatter}

\title{Variational Implementation of Immersed Finite Element Methods}

\author[sissa]{Luca Heltai\corref{cor}}
\ead{luca.heltai@sissa.it}
\cortext[cor]{Corresponding author}

\author[PSU]{Francesco Costanzo}
\ead{costanzo@engr.psu.edu}

\address[sissa]{Scuola Internazionale Superiore di Studi Avanzati, Via Bonomea 265, 34136 Trieste, Italy}

\address[PSU]{Department of Engineering Science and Mechanics, The Pennsylvania State University, 212 Earth and Engineering Sciences Building, University Park, PA 16802, USA}

\begin{abstract}
\emph{Dirac-$\delta$ distributions} are often crucial components of the solid-fluid coupling operators in immersed solution methods for fluid-structure interaction (\acro{FSI}) problems.  This is certainly so for methods like the \emph{Immersed Boundary Method} (\acro{IBM}) or the \emph{Immersed Finite Element Method} (\acro{IFEM}), where Dirac-$\delta$ distributions are approximated via smooth functions. By contrast, a truly variational formulation of immersed methods does not require the use of Dirac-$\delta$ distributions, either formally or practically. This has been shown in the \emph{Finite Element Immersed Boundary Method} (\acro{FEIBM}), where the variational structure of the problem is exploited to avoid Dirac-$\delta$ distributions at both the continuous and the discrete level.

In this paper, we generalize the \acro{FEIBM} to the case where an incompressible Newtonian fluid interacts with a general hyperelastic solid. Specifically, we allow (\emph{i}) the mass density to be different in the solid and the fluid, (\emph{ii}) the solid to be 
either viscoelastic of differential type or purely elastic, and (\emph{iii}) the solid to be either compressible or incompressible.  At the continuous level, our variational formulation combines the natural stability estimates of the fluid and elasticity problems.  In immersed methods, such stability estimates do not transfer to the discrete level automatically due to the non-matching nature of the finite dimensional spaces involved in the discretization.  After presenting our general mathematical framework for the solution of \acro{FSI} problems, we focus in detail on the construction of natural interpolation operators between the fluid and the solid discrete spaces, which guarantee semi-discrete stability estimates and strong consistency of our spatial discretization. 
\end{abstract}

\begin{keyword}
Fluid Strucutre Interaction; Immersed Boundary Methods; Immersed Finite Element Method; Finite Element Immersed Boundary Method
\end{keyword}

\end{frontmatter}

\allowdisplaybreaks{                             %

\section{Introduction}
There are several approaches to the solution of general fluid-structure interaction (\acro{FSI}) problems.  Among these we find a set of methods, which we will call \emph{immersed methods}, for which the discretization of the fluid domain is completely independent of that of the solid. These methods are more recent than and stand in contrast to established methods like the arbitrary Lagrangian-Eulerian (\acro{ALE}) ones (see, e.g., \citealp{HughesLiu_1981_Lagrangian-Eulerian_0}), where the topologies of the solution grids for the fluid and the solid are constrained.  When the flow can be modeled as a linear Stokes flow, reduced methods, such as the boundary element method (see, e.g., \citealp{AlougesDeSimoneLefebvre-2007-a,AlougesDeSimoneHeltai-2011-a}) can be very efficient. However, for more general situations immersed methods offer appealing features.

In immersed methods the solid domain is surrounded by the fluid.  When the fluid and solid do not slip relative to one another, these methods have three basic features:
\begin{enumerate}
\item
The support of the equations of motion of the fluid\footnote{These equations are typically taken to be the Navier-Stokes equations (see, e.g., \citealp{Peskin_1977_Numerical_0,BoffiGastaldi_2003_A-Finite_0,BoffiGastaldiHeltaiPeskin-2008-a,WangLiu_2004_Extended_0}).  However, formulations in which the fluid is modeled as slightly compressible have also been proposed \citep{LiuKim_2007_Mathematical_0,WangZhang_2009_On-Computational_0}.} is extended to the union of the physical fluid and solid domains.

\item
The equations of motion of the fluid have terms that, from a continuum mechanics viewpoint, are body forces ``informing'' the fluid of its interaction with the solid.

\item
The velocity field of the immersed solid is identified with the restriction to the solid domain of the velocity field in the equations of motion of the fluid.
\end{enumerate}
In many respects, immersed methods can be distinguished from one another depending on how these three elements are treated theoretically and/or are implemented practically.

Immersed methods were pioneered by Peskin and his co-workers (\citealp{Peskin_1977_Numerical_0}; see also \citealp{Peskin_2002_The-immersed_0}, for a comprehensive account) who proposed an approach known as the immersed boundary method (\acro{IBM}).  In the \acro{IBM}, the approximate solution of the extended fluid flow problem is obtained via a finite difference (\acro{FD}) scheme.  The body forces expressing the \acro{FSI} are determined by modeling the solid body as a network of elastic fibers with a contractile element.  As such, this system of forces has singular support (the \emph{boundary} in the method's name) and is implemented via Dirac-$\delta$ distributions.  From a numerical viewpoint, the configuration of the fiber network is identified with that of a discrete set of points. The motion of these points is then related to the motion of the fluid again via Dirac-$\delta$ distributions.  Hence, Peskin's approach relies on Dirac-$\delta$ distributions twice: first for the determination of the \acro{FSI} force system, and again for the determination of the motion of the fiber network.  In the method's numerical implementation, the Dirac-$\delta$ distributions are aproximated as \emph{functions}.  Both the use of \acro{FD} schemes and the approximation of the Dirac-$\delta$ distribution yield inconveniences that can be avoided by reformulating the problem in variational form and adopting corresponding approximation schemes such as finite element methods (\acro{FEM}).

The replacement of the \acro{FD} scheme with a finite element method (\acro{FEM}) was first proposed, almost simultaneously, by \cite{BoffiGastaldi_2003_A-Finite_0}, \cite{WangLiu_2004_Extended_0}, and \cite{ZhangGerstenberger_2004_Immersed_0}.  \cite{BoffiGastaldi_2003_A-Finite_0} show that a variational formulation of the problem presented by \cite{Peskin_1977_Numerical_0} does not necessitate the approximation of Dirac-$\delta$ distributions. The explicit presence of Dirac-$\delta$ distributions pertaining to the body force system ``disappears'' naturally in the weak formulation.  As far as the motion of the solid is concerned, the use of the Dirac-$\delta$ distribution is unnecessary because the finite element solution for the fluid velocity field can be evaluated over the solid domain on a pointwise basis.  The thrust of the work by \cite{WangLiu_2004_Extended_0} and \cite{ZhangGerstenberger_2004_Immersed_0} was instead that of removing the requirement that the immersed solid be a ``boundary.''  The methods proposed in \cite{WangLiu_2004_Extended_0} and \cite{ZhangGerstenberger_2004_Immersed_0} apply to solid bodies of general topological and constitutive characteristics.  However, these approaches still require an approximation of the Dirac-$\delta$ distribution as a function.  Specifically,  \cite{WangLiu_2004_Extended_0} and \cite{ZhangGerstenberger_2004_Immersed_0} rely on the reproducing kernel particle method (\acro{RKPM}) to approximate the Dirac-$\delta$ distribution both for the expression of the interaction forces and for the determination of the velocity of the immersed solid.  For future reference, we point out that the work by \cite{WangLiu_2004_Extended_0} and \cite{ZhangGerstenberger_2004_Immersed_0} pertains to systems consisting of a nearly incompressible solid body immersed in a Newtonian fluid.

The generalization of the approach proposed by \cite{BoffiGastaldi_2003_A-Finite_0} to include regular solid bodies, as opposed to boundaries, has been presented in various publications culminating in the works by \cite{Heltai_2006_The-Finite_0}, \cite{BoffiGastaldiHeltaiPeskin-2008-a}, and~\cite{Heltai-2008-a}, (see bibliographic references in these publications for details).  The constitutive behavior of the immersed solid is assumed to be visco-elastic with the viscous component of the solid stress response being identical to that of the fluid.  Another restrictive assumption of this work was the assumption that the fluid and the solid have the same mass density distribution. From the viewpoint of the treatment of the interaction forces and of the velocity equation for the solid body, these works show that the \acro{FEM} allows one to completely avoid dealing with Dirac-$\delta$ distributions and their approximation.  They also show that the velocity field of the solid domain can be determined variationally, i.e., in a way that is consistent with the \acro{FEM} as a whole.  However, the strength of this idea is not fully demonstrated by \cite{BoffiGastaldiHeltaiPeskin-2008-a}.  This is because they choose a finite element discretization of the solid domain for which the motion of the solid is determined by a direct evaluation the fluid's velocity at the vertices of the grid supporting the discretization in question.  The advantage of a fully variational formulation for immersed methods is the fact the \acro{FEM} machinery offering transparent stability results and error estimates becomes readily available along with the machinery developed for adaptivity.

A fully variational formulation of the immersed problem that does not rely on any approximation of the Dirac-$\delta$ distribution has also been formally discussed by \cite{LiuKim_2007_Mathematical_0} (see Eq.~(40) on p.~215).  However, it is not clear whether or not the variational formalism of \cite{LiuKim_2007_Mathematical_0} has been implemented in actual calculations.  Another fully variational formulation of an immersed method has been proposed by \cite{BlancoFeijo_2008_A-Variational_0}.  This formulation can also cope with a variety of constitutive assumptions for the fluid and solid domains.  While some numerical results have been published for the case of solid structures having incompatible kinematic assumptions (see \citealp{BlancoFeijo_2008_A-Variational_1}), no numerical results seems to have been published for \acro{FSI} problems.

Here we present the generalization of the approach discussed by \cite{BoffiGastaldiHeltaiPeskin-2008-a} so as to be applicable to the case of general visco-elastic compressible and incompressible bodies immersed in an incompressible fluid. The proposed scheme produces a discretization which is strongly consistent and stable, and can easily be extended to the case in which the fluid is also compressible.

As mentioned earlier, in immersed methods the velocity of the solid is set equal to the restriction to the solid domain of the velocity of the fluid.  When enforced variationally, this equality is weakened and transport theorems underlying the classical energy estimates typically obtained in continuum mechanics do not hold any longer.  While the classical transport theorems cannot be invoked directly, we show that energy estimates and corresponding stability results can be obtained for our proposed abstract weak formulation that are formally identical to the classical ones from continuum mechanics.

The proposed variational formulation produces a natural discretization scheme which differs from the one presented by \cite{BoffiGastaldiHeltaiPeskin-2008-a} in the determination of the motion of the solid. In \cite{BoffiGastaldiHeltaiPeskin-2008-a} the velocity field is evaluated  at the discrete level on the vertices of the solid mesh. This procedure renders semi-discrete and discrete stability estimates nontrivial for general approximating spaces of the solid displacement. By contrast, the stability results we prove in the abstract weak formulation are inherited naturally by the discretization scheme, provided that conforming approximating spaces are used for the velocity and displacement fields, thus removing the assumptions on the the triangulation of the solid that were present in \cite{BoffiGastaldiHeltaiPeskin-2008-a}.

Another original contribution of our formulation is that the treatment of the case of a compressible solid in an incompressible fluid is not taken as a limit case of some other set of constitutive assumptions.  This is an important detail in that, again to the best of the authors' knowledge, other approaches have dealt with solid/fluid kinematical incompatibilities indirectly, i.e., as limit cases corresponding to some particular value of a tunable parameter.

In Section~\ref{sec: Problem Formulation}, we present a formulation of the equations of motion of an immersed solid in the context of classical continuum mechanics (i.e., under strong smoothness assumptions).  We also offer a concise exposition of the transport theorems and associated energy estimates that are valid in the aforementioned classical context.  In Section~\ref{sec: Reformulation of the governing equations} we reformulate the problem in variational form and present a discussion of the formulation's underlying functional setting.  We then prove the that proposed formulation is stable.  In Section~\ref{sec: Discretization} we present the discrete formulation we derive from the proposed abstract variational formulation and show that the discrete formulation is strongly consistent and inherits the stability of the abstract formulation.  Some numerical results are presented in Section~\ref{sec: Numerics}.

\section{Classical Formulation}
\label{sec: Problem Formulation}

\subsection{Basic notation and governing equations}
\label{subsec: Basic notation and governing equations}
Referring to Fig.~\ref{fig: current_configuration}, 
\begin{figure}[htb]
    \centering
    \includegraphics{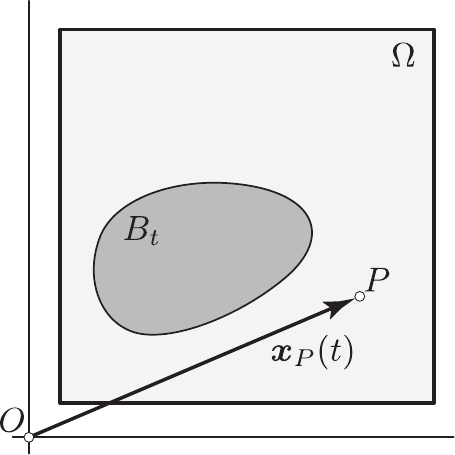}
    \caption{Current configuration $B_{t}$ of a body $\mathscr{B}$ immersed in a fluid occupying the domain $\Omega$.}
    \label{fig: current_configuration}
\end{figure}
$B_{t}$ represents the configuration of a solid body $\mathscr{B}$ at time $t$.  As a point set, $B_{t}$ is a (possibly multiply connected) proper subset of a fixed control volume $\Omega$.  The domain $\Omega\setminus  B_{t}$ is occupied by a fluid and we refer to $B_{t}$ as the \emph{immersed body}.  The boundaries of $\Omega$ and $B_{t}$, with outer unit normals $\bv{m}$ and $\bv{n}$, respectively, will be denoted by $\partial\Omega$ and $\partial B_{t}$.  For convenience, we select a configuration of $\mathscr{B}$ as a reference configuration and we denote it by $B$.  Both $B$ and $B_{t}$ are viewed as submanifolds of a same Euclidian manifold $\mathscr{E}^{d}$, of dimension $d$ equal to $2$ or $3$, covered by a single rectangular Cartesian coordinate system with origin at $O$.  We denote the position of points of $\mathscr{B}$ in $B$ by $\bv{s}$, whereas we denote the position at time $t$ of a generic point $P \in \Omega$ by $\bv{x}_{P}(t)$.  A motion of $\mathscr{B}$ is a diffeomorphism $\bv{\zeta}: B \to B_{t}$, $\bv{x} = \bv{\zeta}(\bv{s},t)$, with $\bv{s} \in B$, $\bv{x} \in \Omega$, and $t \in [0,T)$, with $T$ a positive real number.

We denote by $\rho(\bv{x}, t)$ the spatial (or Eulerian) description of the mass density at the location $\bv{x}$ at time $t$.  The function $\rho$ can be discontinuous across $\partial B_{t}$.  The local form of the balance of mass requires that, $\forall t \in (0,T)$,
\begin{equation}
\label{eq: Balance of mass}
\dot{\rho} + \rho \ldiv\bv{u} = 0,\quad \bv{x} \in \Omega \setminus (\partial\Omega \cup \partial B_{t}),
\end{equation}
where $\bv{u}(\bv{x},t) = \partial\bv{\zeta}(\bv{s},t)/\partial t \big|_{\bv{s} = \bv{\zeta}^{-1}(\bv{x},t)}$ is the spatial description of the material velocity field, a dot over a quantity denotes the material time derivative of that quantity,%
\footnote{\label{footnote: material time derivative}In continuum mechanics, a physical body is assumed to consist of \emph{material points}.  Each of these is considered an individual entity whose position is a function of time and at which physical quantities, such as mass density or momentum density, can be defined.  By definition, the material time derivative of a property of a material point (e.g., momentum), is the time rate of change of that property measured while following the material point in question.  The material time derivative of a (scalar-, vector-, or tensor-valued) field of the type $\phi = \phi(\bv{s},t)$, with $\bv{s} \in B$, is simply $\dot{\phi} = \partial \phi/\partial t$.  In the case of a scalar-valued function $\psi = \psi(\bv{x},t)$, with $\bv{x} \in \Omega$, $\dot{\psi} = \partial\psi/\partial t + (\grad \psi) \cdot \bv{u}$, where `$\grad$' is the gradient with respect to $\bv{x}$,  $\bv{u}(\bv{x},t)$ is the (material) velocity field, and `$\cdot$' denotes the standard inner product for vectors fields.  For a vector-valued function $\bv{w}(\bv{x},t)$, $\dot{\bv{w}} = \partial\bv{w}/\partial t + (\grad \bv{w}) \bv{u}$, where `$(\grad \bv{w}) \bv{u}$' denotes the action of the second order tensor $\grad \bv{w}$ on the velocity field $\bv{u}$.}
and where `$\ldiv$' represents the divergence operator with respect to $\bv{x}$.

We denote by $\tensor{T}(\bv{x},t)$ the spatial description of the Cauchy stress.  The local form of the momentum balance laws require that, $\forall t \in (0,T)$, $\tensor{T} = \trans{\tensor{T}}$ (the superscript $\mathrm{T}$ denotes the transpose) and
\begin{equation}
\label{eq: Cauchy theorem}
\ldiv \tensor{T} + \rho \bv{b} = \rho \dot{\bv{u}},
\quad \bv{x} \in \Omega \setminus (\partial\Omega \cup \partial B_{t}),
\end{equation}
where $\bv{b}(\bv{x},t)$ describes the external force density per unit mass acting on the system.

In addition to Eqs.~\eqref{eq: Balance of mass} and~\eqref{eq: Cauchy theorem}, we also require the satisfaction of some continuity conditions across $\partial B_{t}$.  Specifically, we demand that the velocity field be continuous (corresponding to a no slip condition between solid and fluid) and that the jump condition of the balance of linear momentum be satisfied across $\partial B_{t}$.  For all $t \in (0,T)$, these two conditions can be expressed as follows:
\begin{equation}
\label{eq: jump conditions}
\bv{u}(\check{\bv{x}}^{+},t)  = \bv{u}(\check{\bv{x}}^{-},t)
\quad \text{and} \quad
\tensor{T}(\check{\bv{x}}^{+},t) \bv{n} = \tensor{T}(\check{\bv{x}}^{-},t) \bv{n},
\quad \check{\bv{x}} \in \partial B_{t},
\end{equation}
where the superscripts $-$ and $+$ denote limits as $\bv{x} \to \check{\bv{x}}$ from within and without $B_{t}$, respectively.

We denote by $\partial\Omega_{D}$ and $\partial\Omega_{N}$ the subsets of $\partial\Omega$ where Dirichlet and Neumann boundary data are  prescribed, respectively. The domains $\partial\Omega_{D}$ and $\partial\Omega_{N}$ are such that
\begin{equation}
\label{eq: ND boundary}
\partial\Omega = \partial\Omega_{D} \cup \partial\Omega_{N}
\quad \text{and} \quad
\partial\Omega_{D} \cap \partial\Omega_{N} = \emptyset.
\end{equation}
We denote by $\bv{u}_{g}(\bv{x},t)$, with $\bv{x} \in \partial\Omega_{D}$, and by $\bv{\tau}_{g}(\bv{x},t)$, with $\bv{x}\in\partial\Omega_{N}$, the prescribed values of velocity (Dirichlet data) and traction (Neumann data), respectively, i.e.,
\begin{equation}
\label{eq: boundary conditions}
\bv{u}(\bv{x},t) = \bv{u}_{g}(\bv{x},t),\quad \text{for $\bv{x} \in \partial\Omega_{D}$,}
\quad \text{and} \quad
\tensor{T}(\bv{x},t) \bv{m}(\bv{x},t) = \bv{\tau}_{g}(\bv{x},t), \quad \text{for $\bv{x} \in \partial\Omega_{N}$,}
\end{equation}
where the subscript $g$ stands for `given.'

Using the principle of virtual work and letting $\bv{v}$ denote any admissible variation of the field $\bv{u}$, Eqs.~\eqref{eq: Cauchy theorem} and~\eqref{eq: jump conditions} can be written as follows:
\begin{equation}
\label{eq: Bmomentum weak}
\int_{\Omega}\rho (\dot{\bv{u}} - \bv{b}) \cdot \bv{v} \d{v}
+
\int_{\Omega}\tensor{T} \cdot \grad\bv{v} \d{v}
-
\int_{\partial\Omega_{N}} \bv{\tau}_{g} \cdot \bv{v} \d{a}
=
0,
\end{equation}
where $\d{a}$ and $\d{v}$ represent infinitesimal area and volume elements, respectively.  We can reformulate Eq.~\eqref{eq: Balance of mass} in variational form as follows:
\begin{equation}
\label{eq: Bmass weak}
\int_{\Omega} \biggl(\frac{\dot{\rho}}{\rho} + \ldiv\bv{u}\biggr) q \d{v} = 0,
\end{equation}
where, from a physical viewpoint, $q$ represents any admissible variation of the pressure in the system.  In the case of incompressible materials, $\dot{\rho} = 0$ and Eq.~\eqref{eq: Bmass weak} yields the traditional weak form of the incompressibility constraint, namely, $\int_{\Omega} q\ldiv \bv{u} \d{v} = 0$.

\subsection{Constitutive behavior}
\label{subsec: Constitute behavior}

\subsubsection{Constitutive response of the fluid.}
\label{subsec: Constitute response of the fluid}
We assume that the fluid is linear viscous and incompressible with uniform mass density $\rho_{\f}$.  Denoting by $\hat{\tensor{T}}_{\f}$ the constitutive response function of the Cauchy stress of the fluid, we have (see, e.g., \citealp{GurtinFried_2010_The-Mechanics_0})
\begin{equation}
\label{eq: incompressible NS fluid}
\hat{\tensor{T}}_{\f} = -p \tensor{I} + 2 \mu_{\f} \tensor{D},
\quad
\tensor{D} = \tfrac{1}{2} \bigl(\tensor{L} + \trans{\tensor{L}}\bigr),
\end{equation}
where $p$ is the pressure of the fluid, $\tensor{I}$ is the identity tensor, $\mu_{\f} > 0$ is the dynamic viscosity of the fluid, and $\tensor{L} = \grad \bv{u}$, and where a ``hat'' ($\hat{\tensor{T}}$) is used to distinguish the constitutive response function for $\tensor{T}$ from $\tensor{T}$ itself.  For convenience, we denote by $\hat{\tensor{T}}^{v}_{\f}$ the viscous component of $\hat{\tensor{T}}_{\f}$, i.e.,
\begin{equation}
\label{eq: viscous T fluid}
\hat{\tensor{T}}^{v}_{\f} = 2 \mu_{\f} \, \tensor{D} = \mu_{\f} \, \bigl(\tensor{L} + \trans{\tensor{L}}\bigr).
\end{equation}

Incompressibility demands that $\dot{\rho}_{\f} = 0$ so that Eq.~\eqref{eq: Balance of mass} yields the kinematic constraint
\begin{equation}
\label{eq: incompressibility constraint fluid}
\ldiv \bv{u} = 0 \quad \text{for $\bv{x} \in \Omega\setminus B_{t}$}.
\end{equation}
Under these conditions, $p$ is a Lagrange multiplier allowing us to enforce Eq.~\eqref{eq: incompressibility constraint fluid}.  In addition, Eq.~\eqref{eq: incompressibility constraint fluid} also implies that $\trace{\tensor{L}} = 0$ so that the term $\hat{\tensor{T}}^{v}_{\f}$ in Eqs.~\eqref{eq: incompressible NS fluid} is the deviatoric part of the Cauchy stress in the fluid.

\subsubsection{Constitutive response of the solid.}
\label{subsec: Constitute response of the solid}
We assume that the body $\mathscr{B}$ is viscoelastic of differential type.  The response function for the Cauchy stress of the solid is assumed to have the following form:
\begin{equation}
\label{eq: solid Cauchy Response Function}
\hat{\tensor{T}}_{\s} = \hat{\tensor{T}}^{e}_{\s} + \hat{\tensor{T}}^{v}_{\s},
\end{equation}
where $\hat{\tensor{T}}^{e}_{\s}$ and $\hat{\tensor{T}}^{v}_{\s}$ denote the elastic and viscous parts of $\hat{\tensor{T}}_{\s}$, respectively. The viscous part of the behavior is assumed to be of the same type as that of the fluid, that is,
\begin{equation}
\label{eq: viscous part solid}
\hat{\tensor{T}}^{v}_{\s} = 2 \mu_{\s} \, \tensor{D} = \mu_{\s} \, \bigl(\tensor{L} + \trans{\tensor{L}} \bigr),
\end{equation}
where $\mu_{\s} \geq 0$ is the dynamic viscosity of the solid.  We do include the possibility that $\mu_{\s}$ might be equal to zero, in which case the solid behaves in a purely elastic manner.  As far as $\hat{\tensor{T}}^{e}_{\s}$ is concerned, we assume that it is given by a strain energy potential.  To describe this part of the behavior in precise terms, we introduce the first Piola-Kirchhoff stress tensor, denoted by $\tensor{P}$ and defined as (see, e.g., \citealp{GurtinFried_2010_The-Mechanics_0}):
\begin{equation}
\label{eq: P defs}
\tensor{P} = J \tensor{T} \invtrans{\tensor{F}},
\end{equation}
where $J = \det\tensor{F}$, and the tensor $\tensor{F}$, called the deformation gradient, is defined as
\begin{equation}
\label{eq: F defs}
\tensor{F} = \frac{\partial \map}{\partial \bv{s}}.
\end{equation}
As is standard in continuum mechanics (see, e.g., \citealp{GurtinFried_2010_The-Mechanics_0}), we require $J$ to satisfy the following assumption:
\begin{equation}
\label{eq: J positive}
J(\bv{s},t) \geq J_m > 0
\quad
\text{$\forall \bv{s} \in B$ and $\forall t \in [0,t)$}.
\end{equation}
Therefore, $\tensor{F}$ always admits an inverse, as required for Eq.~\eqref{eq: P defs} to be meaningful.  Hence, letting $\hat{\tensor{P}}^{e}_{\s} = J \hat{\tensor{T}}^{e}_{\s}\invtrans{F}$ denote the constitutive response function for the elastic  part of the first Piola-Kirchhoff stress tensor, as is typical in  elasticity, we assume that there exists a function $\hat{W}^{e}_{\s}(\tensor{F})$ such that
\begin{equation}
\label{eq: Elastic 1stPK stress}
\hat{\tensor{P}}^{e}_{\s} = \frac{\partial\hat{W}^{e}_{\s}(\tensor{F})}{\partial{\tensor{F}}},
\end{equation}
where $\hat{W}^{e}_{\s}$ is the constitutive response function of the volume density of the elastic strain energy of the solid.  To satisfy invariance under changes of observer, $\hat{W}^{e}_{\s}$ must be a function of an objective strain measure such as $\tensor{C} = \trans{\tensor{F}}\tensor{F}$.  In addition, if the solid is isotropic, $\hat{W}^{e}_{\s}$ will be taken to be a function of the principal invariants of $\tensor{C}$.  Finally, if the solid is incompressible, then its stress response is determined by deformation only up to a hydrostatic component.  In this case, the constitutive response function for the solid has the form
\begin{equation}
\label{eq: Cauchy Response Function}
\hat{\tensor{T}}_{\s} = -p \tensor{I} + \hat{\tensor{T}}^{e}_{\s} + \hat{\tensor{T}}^{v}_{\s},
\end{equation}
where $p$ is the Lagrange multiplier needed to enforce incompressibility, $\hat{\tensor{T}}^{v}_{\s}$ is given by Eq.~\eqref{eq: viscous part solid}, and $\hat{\tensor{T}}^{e}_{\s}$ is obtained from Eq.~\eqref{eq: Elastic 1stPK stress} via Eq.~\eqref{eq: P defs}.

\subsubsection{Elastic strain energy and dissipation}
\label{subsubsection: Elastic strain energy and dissipation}
While more general cases can be considered, we assume that $\hat{W}_{\s}^{e}(\tensor{F})$ is a $C^{1}$ convex function over the set of second order tensor with positive determinant.  As far as the viscous part of the behavior is concerned, we have already assumed that $\mu_{\f} > 0$ and $\mu_{\s} \geq 0$.  These conditions imply that
\begin{equation}
\label{eq: dissipations inequality}
\hat{\tensor{T}}^{v}_{\f} \cdot \tensor{L} > 0
, \qquad
\hat{\tensor{T}}^{v}_{\s} \cdot \tensor{L} \geq 0
\end{equation}
for all $\tensor{L} \neq \tensor{0}$.  Equations~\eqref{eq: dissipations inequality} imply that the viscous part of the behavior is dissipative.

\subsubsection{Mass density distribution}
\label{subsec: Mass density distribution}
As a last aspect of the formulation related to constitutive behavior, we will denote by 
\begin{equation}
\label{eq: density solid}
\rho_{\s_{0}} = \rho_{\s_{0}}(\bv{s}), \quad \bv{s} \in B,
\end{equation}
the referential (or Lagrangian) description of the mass density of the solid.  While Eq.~\eqref{eq: Balance of mass} holds for the solid as well as the fluid, the local form of the balance of mass for a solid is typically expressed in Lagrangian form as follows:
\begin{equation}
\label{eq: BM solid Lagrangian}
\rho_{\s_{0}}(\bv{s}) = \rho_{\s}(\bv{x},t)\big|_{\bv{x} = \bv{\zeta}(\bv{s},t)} J(\bv{s},t),\quad \bv{s} \in B,
\end{equation}
where $\rho_{\s}(\bv{x},t)$ is the spatial description of the mass density of the solid.  We will indicate the general mass density of the system with $\rho = \rho(\bv{x},t)$, with the underlying assumption that
\begin{equation}
  \label{eq:definition of overall rho}
  \rho(\bv{x},t) = \begin{cases}
\rho_{\f}, & \text{for $\bv{x} \in \Omega\setminus B_{t}$},
\\
\rho_{\s}(\bv{x},t), & \text{for $\bv{x} \in B_{t}$},
\end{cases}
\end{equation}
where, as stated earlier, $\rho_{\f}$ is a constant.

\subsection{Transport theorems}
\label{subsec: Transport}
Transport theorems are kinematic results pertaining to the time differentiation of integrals over time-dependent domains.  These results are  useful in the discussion of energy estimates.
\begin{theorem}[Transport theorem for generic time dependent domains]
\label{th: GTT}
Let $\tilde{\Omega}(t) \in \mathscr{E}^{d}$, with $d = 2, 3$ and $t \in \mathbb{R}$, be a regular, possibly multiply-connected time-dependent domain with boundary $\partial\tilde{\Omega}(t)$.  Let $\bv{m}$ be the unit normal field orienting $\partial\tilde{\Omega}(t)$, outward with respect to $\tilde{\Omega}(t)$.  Let $\bv{\nu}$ be the velocity of $\partial\tilde{\Omega}(t)$ according to some convenient time-parametrization of $\partial\tilde{\Omega}(t)$.  Let $\phi(\bv{x},t)$, with $\bv{x} \in \tilde{\Omega}(t)$ be a smooth field defined over $\tilde{\Omega}(t)$.  Then we have
\begin{equation}
\label{eq: General transport theorem}
\frac{\nsd{}}{\nsd{t}} \int_{\tilde{\Omega}(t)} \phi(\bv{x},t) \d{v} = \int_{\tilde{\Omega}(t)} \frac{\partial \phi(\bv{x},t)}{\partial t} \d{v} + \int_{\partial\tilde{\Omega}(t)} \phi(\bv{x},t) \, \bv{\nu} \cdot \bv{m} \d{a}.
\end{equation}
\end{theorem}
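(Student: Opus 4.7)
The plan is to pull the moving domain back to a fixed reference via a smooth family of diffeomorphisms, differentiate under the integral sign where this is legitimate, and then push the result forward and invoke the divergence theorem. First, I would fix a reference configuration $\tilde{\Omega}_{0} = \tilde{\Omega}(t_{0})$ and construct a one-parameter family of diffeomorphisms $\bv{\chi}(\cdot,t) : \tilde{\Omega}_{0} \to \tilde{\Omega}(t)$ whose pointwise time derivative on the boundary agrees, after composition with $\bv{\chi}^{-1}$, with the prescribed velocity $\bv{\nu}$. Because only the boundary velocity is specified, the interior behaviour of $\bv{\chi}$ is chosen as any smooth extension that keeps $\det(\partial\bv{\chi}/\partial\bv{s}) > 0$; I will verify a posteriori that the final identity depends only on $\bv{\nu}\cdot\bv{m}$, which justifies this freedom. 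Let $\bv{\nu}_{e}(\bv{x},t)$ denote the resulting extended velocity field in the spatial description and let $J(\bv{s},t) = \det(\partial\bv{\chi}/\partial\bv{s})$.

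The change of variables $\bv{x} = \bv{\chi}(\bv{s},t)$ yields
\begin{equation*}
\int_{\tilde{\Omega}(t)} \phi(\bv{x},t) \d{v} = \int_{\tilde{\Omega}_{0}} \phi(\bv{\chi}(\bv{s},t),t)\, J(\bv{s},t) \d{v_{0}}.
\end{equation*}
The reference domain is independent of $t$, so differentiation and integration commute. Using the chain rule together with the classical Jacobian identity $\partial J/\partial t = J \, (\ldiv\bv{\nu}_{e})\circ\bv{\chi}$, and then pushing the integrand back to $\tilde{\Omega}(t)$, I obtain
\begin{equation*}
\frac{\nsd{}}{\nsd{t}} \int_{\tilde{\Omega}(t)} \phi \d{v} = \int_{\tilde{\Omega}(t)} \Bigl[\frac{\partial\phi}{\partial t} + (\grad\phi)\cdot\bv{\nu}_{e} + \phi \, \ldiv\bv{\nu}_{e}\Bigr] \d{v} = \int_{\tilde{\Omega}(t)} \frac{\partial\phi}{\partial t} \d{v} + \int_{\tilde{\Omega}(t)} \ldiv(\phi\,\bv{\nu}_{e}) \d{v}.
\end{equation*}

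A single application of the divergence theorem to the last integral produces the boundary term $\int_{\partial\tilde{\Omega}(t)} \phi\, \bv{\nu}_{e}\cdot\bv{m} \d{a}$. On $\partial\tilde{\Omega}(t)$ one has $\bv{\nu}_{e}\cdot\bv{m} = \bv{\nu}\cdot\bv{m}$, independently of the tangential component of the extension, so the announced formula follows and the result is simultaneously seen to be insensitive to the choice of $\bv{\chi}$ inside $\tilde{\Omega}(t)$. The principal technical obstacle is not the computation but this setup step: one must argue that a smooth diffeomorphic extension with $J > 0$ exists for every $t$ in a neighbourhood of the instant of interest given the assumed regularity of $\partial\tilde{\Omega}(t)$, and that every such extension yields the same right-hand side. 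Once this is secured, all remaining ingredients --- the chain rule, the Jacobian time-derivative formula, and the divergence theorem --- are entirely classical.
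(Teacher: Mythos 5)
Your argument is correct and is essentially the classical proof: the paper itself does not prove Theorem~\ref{th: GTT} but defers to standard references (Truesdell--Toupin, Gurtin--Fried--Anand), and the pull-back to a fixed reference domain, the Jacobian identity $\partial J/\partial t = J(\ldiv\bv{\nu}_{e})\circ\bv{\chi}$, and a final application of the divergence theorem is precisely the proof given there. The one step you rightly flag --- existence of a smooth diffeomorphic extension of the boundary motion with $J>0$, and independence of the result from its tangential/interior part since only $\bv{\nu}\cdot\bv{m}$ survives --- is exactly what the hypothesis of a ``regular'' time-dependent domain with a chosen time-parametrization of $\partial\tilde{\Omega}(t)$ is meant to supply, so no genuine gap remains.
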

Theorem~\ref{th: GTT} is is a well-known result whose proof is available in various textbooks (see, e.g., \citealp{TruesdellToupin-CFTEP-1960-1,GurtinFried_2010_The-Mechanics_0}).  The following form of the transport theorem is a simple but new result that is particularly suited for the analysis of the motion of immersed bodies.  The proof of the theorem below can be found in the Appendix.

\begin{theorem}[Transport theorem for a control volume containing an immersed domain]
\label{th: immersed in control volume}
Let $\Omega$ and $B_{t}$ be the domains defined in Section~\ref{subsec: Basic notation and governing equations}.  That is, let $B_{t}$ be the current configuration of a body immersed in the control volume $\Omega$.  Let $\psi(\bv{x},t)$ denote the Eulerian description of a density per unit mass defined over $\Omega \supset B_{t}$, smooth over the interiors of $\Omega\setminus B_{t}$ and $B_{t}$ but not necessarily continuous across $\partial B_{t}$. Also let $\rho(\bv{x},t)$ be the Eulerian description of the mass density distribution, which need not be continuous across $\partial B_{t}$.  Then
\begin{equation}
\label{eq: TT BM Omega and Bt}
\frac{\nsd{}}{\nsd{t}} \int_{\Omega} \rho \psi \d{v} + \int_{\partial\Omega} \rho \psi \, \bv{u} \cdot \bv{m} \d{a} = \int_{\Omega} \rho \dot{\psi} \d{v}.
\end{equation}
\end{theorem}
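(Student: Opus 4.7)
My plan is to reduce Theorem~\ref{th: immersed in control volume} to the classical transport theorem (Theorem~\ref{th: GTT}) by splitting $\Omega$ into the two regions $B_{t}$ and $\Omega\setminus B_{t}$, in whose interiors both $\rho$ and $\psi$ are smooth, and then to show that the surface contributions generated at the interface $\partial B_{t}$ cancel. Concretely, I first apply Theorem~\ref{th: GTT} to $B_{t}$, whose boundary moves with the material velocity $\bv{u}$ (well defined on $\partial B_{t}$ thanks to the no-slip part of~\eqref{eq: jump conditions}) and carries outward unit normal $\bv{n}$; I then apply the same theorem to $\Omega\setminus B_{t}$, whose boundary is the disjoint union of the fixed portion $\partial\Omega$ (where $\bv{\nu}=\bv{0}$) and of $\partial B_{t}$ with outward normal $-\bv{n}$ and velocity $\bv{u}$. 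Summing the two identities yields
\begin{equation*}
\frac{\nsd{}}{\nsd{t}}\int_{\Omega}\rho\psi\d{v}
=
\int_{\Omega}\frac{\partial(\rho\psi)}{\partial t}\d{v}
+
\int_{\partial B_{t}}\bigl[(\rho\psi)^{-}-(\rho\psi)^{+}\bigr]\bv{u}\cdot\bv{n}\d{a},
\end{equation*}
with the interface residual surviving precisely because $\rho\psi$ is allowed to jump across $\partial B_{t}$.

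Next, I use the local balance of mass~\eqref{eq: Balance of mass}, rewritten as $\partial\rho/\partial t=-\ldiv(\rho\bv{u})$, together with the definition of the material time derivative and the product rule for the divergence, to obtain in each of the two smooth regions the pointwise identity
\begin{equation*}
\frac{\partial(\rho\psi)}{\partial t}=\rho\dot{\psi}-\ldiv(\rho\psi\bv{u}).
\end{equation*}
Applying the divergence theorem separately in $B_{t}$ and in $\Omega\setminus B_{t}$ then generates a flux contribution on $\partial\Omega$ equal to $-\int_{\partial\Omega}\rho\psi\bv{u}\cdot\bv{m}\d{a}$, and a second interface contribution on $\partial B_{t}$ equal to $-\int_{\partial B_{t}}[(\rho\psi)^{-}-(\rho\psi)^{+}]\bv{u}\cdot\bv{n}\d{a}$, which is exactly the opposite of the one appearing in the previous display.

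Substituting this second identity into the first, the two surface integrals over $\partial B_{t}$ cancel and what remains is precisely~\eqref{eq: TT BM Omega and Bt}. The delicate step, and the one I would spell out most carefully, is exactly this cancellation of interface contributions despite the jumps of $\rho$ and $\psi$: it works only because $\bv{u}$ itself is continuous across $\partial B_{t}$, so that the common trace of $\bv{u}\cdot\bv{n}$ multiplies the jump of $\rho\psi$ in both contributions with opposite signs. Everything else is a routine use of the product rule and of the divergence theorem, once enough regularity is assumed on $\partial B_{t}$ and on the restrictions of $\rho$ and $\psi$ to the two subdomains to legitimize the applications of Theorem~\ref{th: GTT}.
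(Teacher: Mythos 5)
Your proof is correct and takes essentially the same route as the paper's appendix: split $\Omega$ into $B_{t}$ and $\Omega\setminus B_{t}$, apply Theorem~\ref{th: GTT} on each piece with $\bv{\nu}=\bv{u}$ on $\partial B_{t}$ and $\bv{\nu}=\bv{0}$ on $\partial\Omega$, rewrite $\partial(\rho\psi)/\partial t = \rho\dot{\psi} - \ldiv(\rho\psi\bv{u})$ via the balance of mass, and use the divergence theorem on each subdomain. The only difference is bookkeeping: the paper cancels the $\partial B_{t}$ contributions subdomain by subdomain through the intermediate Lemmas of the appendix, whereas you sum first and cancel the jump bracket $[(\rho\psi)^{-}-(\rho\psi)^{+}]\,\bv{u}\cdot\bv{n}$ using the common trace of $\bv{u}\cdot\bv{n}$, which amounts to the same computation.
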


\begin{remark}[Generality of Theorem~\ref{th: immersed in control volume}]
Theorem~\ref{th: immersed in control volume} is a straightforward but nontrivial result implied by the combined application of Theorem~\ref{th: GTT} and the balance of mass.  One crucial element of Theorem~\ref{th: immersed in control volume} is that no special assumption on the behavior of the mass density was necessary.  That is, Theorem~\ref{th: immersed in control volume} is valid whether or not $\rho$ is constant or the fluid flow is steady.
\end{remark}

\subsection{Theorem of power expended}
\label{subsec: Theorem of power expended}
In this paper we propose an immersed method for the numerical solution of the problem governed by Eqs.~\eqref{eq: Bmomentum weak} and~\eqref{eq: Bmass weak} under standard physical assumptions concerning the constitutive behavior of the fluid and of the immersed solid.  We will discuss energy estimates and associated stability properties for the proposed method.  To facilitate this discussion, it is useful to relate the power supplied to the system and the system's time rate of change of kinetic energy.  Such a relationship is typically referred to as the theorem of power expended (see, e.g., \citealp{GurtinFried_2010_The-Mechanics_0}).  Here we derive a form of the theorem of power expended that fits our purposes.  Before doing so we introduce the following definitions:
\begin{equation}
\label{eq: TPE defs}
\kappa(\bv{x},t) := \tfrac{1}{2} \rho(\bv{x},t) \bv{u}^{2}(\bv{x},t)
\quad \text{and} \quad
\hat{\tensor{T}}^{v}(\bv{x},t) = 
\begin{cases}
\hat{\tensor{T}}^{v}_{\f}, & \text{for $\bv{x} \in \Omega\setminus B_{t}$},
\\
\hat{\tensor{T}}^{v}_{\s}, & \text{for $\bv{x} \in B_{t}$},
\end{cases}
\end{equation}
where $\kappa$ is the kinetic energy density per unit volume and $\bv{u}^{2} := \bv{u} \cdot \bv{u}$.

\begin{theorem}[Theorem of power expended for a control volume with an immersed domain]
\label{th: TPE}
Let $\Omega$ and $B_{t}$ be the domains defined in Section~\ref{subsec: Basic notation and governing equations}.  That is, let $B_{t}$ be the current configuration of a body immersed in the control volume $\Omega$.  Let the motion of the system be governed by Eqs.~\eqref{eq: Bmomentum weak} and~\eqref{eq: Bmass weak}.  Then
\begin{equation}
\label{eq: TPE rel}
\int_{\Omega} \rho \bv{b} \cdot \bv{u} \d{v} + \int_{\partial\Omega_{N}} \bv{\tau}_{g} \cdot \bv{u} \d{a} = \frac{\nsd{}}{\nsd{t}} \int_{\Omega} \kappa \d{v} + \int_{\partial\Omega} \kappa \, \bv{u} \cdot \bv{m} \d{a} + \frac{\nsd{}}{\nsd{t}} \int_{B} W^{e}_{\s} \d{V} + \int_{\Omega} \hat{\tensor{T}}^{v} \cdot \tensor{L}  \d{v}.
\end{equation}
\end{theorem}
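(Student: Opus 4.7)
The plan is to substitute the admissible variation $\bv{v} = \bv{u}$ into the weak momentum equation~\eqref{eq: Bmomentum weak} and then rewrite each term using the transport theorem of Section~\ref{subsec: Transport} together with the constitutive prescriptions of Section~\ref{subsec: Constitute behavior}. After the substitution, the equation reads
\begin{equation*}
\int_{\Omega}\rho\dot{\bv{u}}\cdot\bv{u}\d{v}+\int_{\Omega}\tensor{T}\cdot\grad\bv{u}\d{v}=\int_{\Omega}\rho\bv{b}\cdot\bv{u}\d{v}+\int_{\partial\Omega_{N}}\bv{\tau}_{g}\cdot\bv{u}\d{a},
\end{equation*}
so the task reduces to recognizing the two left-hand integrals as, respectively, the kinetic-energy transport contribution and the sum of elastic-storage and viscous-dissipation terms on the right-hand side of~\eqref{eq: TPE rel}.

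For the inertial integral, first I would note that $\dot{\bv{u}}\cdot\bv{u}=\dot{\psi}$ with $\psi:=\tfrac12\bv{u}^{2}$, which is a density per unit mass smooth in the interiors of $B_{t}$ and $\Omega\setminus B_{t}$ even though $\bv{u}$ may have non-smooth gradient across $\partial B_{t}$. Theorem~\ref{th: immersed in control volume} applied to this $\psi$ together with the definition $\kappa=\rho\psi$ from~\eqref{eq: TPE defs} then yields
\begin{equation*}
\int_{\Omega}\rho\dot{\bv{u}}\cdot\bv{u}\d{v}=\frac{\nsd{}}{\nsd{t}}\int_{\Omega}\kappa\d{v}+\int_{\partial\Omega}\kappa\,\bv{u}\cdot\bv{m}\d{a},
\end{equation*}
reproducing the first two terms on the right of~\eqref{eq: TPE rel} without any assumption on the continuity of $\rho$ across $\partial B_{t}$.

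For the stress-power integral, I would split the integration domain into $\Omega\setminus B_{t}$ and $B_{t}$ and apply the constitutive splits of Sections~\ref{subsec: Constitute response of the fluid} and~\ref{subsec: Constitute response of the solid}. Using the symmetry of $\tensor{T}$ gives $\tensor{T}\cdot\grad\bv{u}=\tensor{T}\cdot\tensor{D}$, so on $\Omega\setminus B_{t}$ the pressure contribution $-p\tensor{I}\cdot\tensor{D}=-p\ldiv\bv{u}$ vanishes by~\eqref{eq: incompressibility constraint fluid}, leaving $\hat{\tensor{T}}_{\f}^{v}\cdot\tensor{L}$; the same pressure cancellation takes care of the Lagrange-multiplier term in the incompressible solid case~\eqref{eq: Cauchy Response Function}. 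Collecting the viscous pieces with the definition of $\hat{\tensor{T}}^{v}$ in~\eqref{eq: TPE defs} produces $\int_{\Omega}\hat{\tensor{T}}^{v}\cdot\tensor{L}\d{v}$, which is the last term of~\eqref{eq: TPE rel}.

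The only remaining piece, and in my judgment the main obstacle, is identifying the elastic stress power with $\tfrac{\nsd{}}{\nsd{t}}\int_{B}\hat{W}_{\s}^{e}\d{V}$. I would proceed by pulling back to the reference configuration: using $\d{v}=J\d{V}$, $\tensor{L}=\dot{\tensor{F}}\tensor{F}^{-1}$, and the definition~\eqref{eq: P defs}, one obtains
\begin{equation*}
\int_{B_{t}}\hat{\tensor{T}}_{\s}^{e}\cdot\grad\bv{u}\d{v}=\int_{B}(J\hat{\tensor{T}}_{\s}^{e}\invtrans{\tensor{F}})\cdot\dot{\tensor{F}}\d{V}=\int_{B}\hat{\tensor{P}}_{\s}^{e}\cdot\dot{\tensor{F}}\d{V}.
\end{equation*}
Invoking the hyperelastic relation~\eqref{eq: Elastic 1stPK stress} and the chain rule yields $\hat{\tensor{P}}_{\s}^{e}\cdot\dot{\tensor{F}}=\tfrac{\nsd{}}{\nsd{t}}\hat{W}_{\s}^{e}(\tensor{F})$, and because $B$ is time-independent the time derivative commutes with the integral, producing the third term of~\eqref{eq: TPE rel}. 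The delicate point here is ensuring that the invertibility assumption~\eqref{eq: J positive} holds throughout the motion so that both the pull-back and the definition of $\hat{\tensor{P}}_{\s}^{e}$ are well-defined; once this is established, combining the three contributions gives~\eqref{eq: TPE rel} and completes the proof.
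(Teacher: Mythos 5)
Your proposal is correct and follows essentially the same route as the paper's proof: setting $\bv{v}=\bv{u}$ in Eq.~\eqref{eq: Bmomentum weak}, invoking Theorem~\ref{th: immersed in control volume} for the kinetic-energy terms, cancelling the pressure via incompressibility in both fluid and (incompressible) solid, and pulling the elastic stress power back to $B$ through $\tensor{P}=J\tensor{T}\invtrans{\tensor{F}}$, $\tensor{L}=\dot{\tensor{F}}\tensor{F}^{-1}$ and the hyperelastic relation~\eqref{eq: Elastic 1stPK stress} before exchanging the time derivative with the integral over the fixed domain $B$. The only cosmetic difference is your intermediate use of the symmetry of $\tensor{T}$ to pass through $\tensor{D}$, which the paper does not need.
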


\begin{proof}[Proof of Theorem~\ref{th: TPE}]
Replacing $\bv{v}$ with $\bv{u}$ in Eq.~\eqref{eq: Bmomentum weak} and rearranging, we obtain
\begin{equation}
\label{eq: TPE directly from PVW}
\int_{\Omega} \rho \bv{b} \cdot \bv{u} \d{v} + \int_{\Omega_{N}} \bv{\tau}_{g} \cdot \bv{u} \d{a} = \int_{\Omega} \rho \dot{\bv{u}} \cdot \bv{u} \d{v} + \int_{\Omega} \tensor{T} \cdot \tensor{L} \d{v},
\end{equation}
where we have used the fact that $\tensor{L} = \grad \bv{u}$.  We now observe that
\begin{equation}
\label{eq: TPE kin energy}
\rho\dot{\bv{u}} \cdot \bv{u} = \tfrac{1}{2} \rho \, \dot{\overline{\bv{u}^{2}}},
\end{equation}
where the line over $\bv{u}^{2}$ simply denotes the fact that the material time derivative (denoted by the dot over the line) must be applied to the quantity under the line, namely, $\bv{u}^{2}$.  Therefore, recalling that, by the first of Eqs.~\eqref{eq: TPE defs}, $\kappa = \tfrac{1}{2} \rho \bv{u}^{2}$,  we have that
\begin{equation}
\label{eq: TPE KE int first}
\int_{\Omega} \rho \dot{\bv{u}} \cdot \bv{u} \d{v}
= 
\int_{\Omega} \tfrac{1}{2} \rho \, \dot{\overline{\bv{u}^{2}}} \d{v}
\quad \Rightarrow \quad
\int_{\Omega} \rho \dot{\bv{u}} \cdot \bv{u} \d{v}
= 
\frac{\nsd{}}{\nsd{t}} \int_{\Omega} \kappa \d{v} + \int_{\partial\Omega} \kappa \bv{u} \cdot \bv{m} \d{a},
\end{equation}
where, to obtain this last expression, we have used Theorem~\ref{th: immersed in control volume}.  Next, using the constitutive equations in Section~\ref{subsec: Constitute behavior}, for $\bv{x} \in \Omega\setminus B_{t}$, i.e., in the fluid, we have that
\begin{equation}
\label{eq: TPE fluid stress power}
\tensor{T} \cdot \tensor{L} = -p \tensor{I} \cdot \tensor{L} + \hat{\tensor{T}}^{v}_{\f} \cdot \tensor{L}
\quad \Rightarrow \quad
\tensor{T} \cdot \tensor{L} = -p \ldiv \bv{u} + \hat{\tensor{T}}^{v}_{\f} \cdot \tensor{L}
\quad \Rightarrow \quad
\tensor{T} \cdot \tensor{L} = \hat{\tensor{T}}^{v}_{\f} \cdot \tensor{L},
\end{equation}
where we have used the fact that, in the fluid, $\ldiv \bv{u} = 0$ due to incompressibility.  For $\bv{x} \in B_{t}$, i.e., in the solid, we would normally have to distinguish between the compressible and incompressible cases.  However, the final result is the same due to the fact that, in the incompressible case, the Lagrange multiplier $p$ does not contribute to the stress power as was shown in Eqs.~\eqref{eq: TPE fluid stress power}.  Therefore in the solid we have
\begin{equation}
\label{eq: TPE solid stress power}
\tensor{T} \cdot \tensor{L} = \hat{\tensor{T}}^{v}_{\s} \cdot \tensor{L} + \hat{\tensor{T}}^{e}_{\s} \cdot \tensor{L}.
\end{equation}
Using Eqs.~\eqref{eq: TPE fluid stress power} and~\eqref{eq: TPE solid stress power} along with the definition in the second of Eqs.~\eqref{eq: TPE defs}, whether the solid is compressible or not, we have
\begin{equation}
\label{eq: TPE overall stress power}
\begin{multlined}
\int_{\Omega} \tensor{T} \cdot \tensor{L} \d{v} = \int_{\Omega\setminus B_{t}} \hat{\tensor{T}}^{v}_{\f} \cdot \tensor{L} \d{v}
+ 
\int_{B_{t}} \hat{\tensor{T}}^{v}_{\s} \cdot \tensor{L} \d{v}
+ \int_{B_{t}} \hat{\tensor{T}}^{e}_{\s} \cdot \tensor{L} \d{v}
\\
\Rightarrow \quad
\int_{\Omega} \tensor{T} \cdot \tensor{L} \d{v} = \int_{\Omega} \hat{\tensor{T}}^{v} \cdot \tensor{L} \d{v}
+ \int_{B_{t}} \hat{\tensor{T}}^{e}_{\s} \cdot \tensor{L} \d{v}.
\end{multlined}
\end{equation}
Next, recalling that $\tensor{L} = \dot{\tensor{F}}\tensor{F}^{-1}$, we recall that
\begin{equation}
\label{eq: TPE pull back elastic work}
\int_{B_{t}} \hat{\tensor{T}}^{e}_{\s} \cdot \tensor{L} \d{v}
= 
\int_{B} J \hat{\tensor{T}}^{e}_{\s} \cdot \dot{\tensor{F}} \tensor{F}^{-1} \d{V}
\quad \Rightarrow \quad
\int_{B_{t}} \hat{\tensor{T}}^{e}_{\s} \cdot \tensor{L} \d{v}
= 
\int_{B} \hat{\tensor{P}}^{e}_{\s} \cdot \dot{\tensor{F}} \d{V},
\end{equation}
where we have used Eq.~\eqref{eq: P defs} along with the tensor identity $\tensor{A} \cdot \tensor{B} \tensor{C} = \tensor{A} \trans{\tensor{C}} \cdot \tensor{B}$.  Using Eq.~\eqref{eq: Elastic 1stPK stress} we see that $\hat{\tensor{P}}^{e}_{\s} \cdot \dot{\tensor{F}} = \dot{\hat{W}}^{e}_{\s}$, so that, combining the results in the last of Eqs.~\eqref{eq: TPE overall stress power} and~\eqref{eq: TPE pull back elastic work}, we can write
\begin{equation}
\label{eq: TPE almost done stress power}
\int_{\Omega} \tensor{T} \cdot \tensor{L} \d{v} = \int_{\Omega} \hat{\tensor{T}}^{v} \cdot \tensor{L} \d{v}
+
\int_{B} \dot{\hat{W}}^{e}_{\s} \d{V}.
\end{equation}
We recall that $\hat{W}^{e}_{\s} = \hat{W}^{e}_{\s}(\bv{s},t)$, $\bv{s} \in B$, so that $\dot{\hat{W}}^{e}_{\s} = \partial\hat{W}^{e}_{\s}/\partial t$.  Therefore, observing that $B$ is a fixed domain (with fixed boundary), using Theorem~\ref{th: GTT} with the identification $\tilde{\Omega} \to B$, we can express Eq.~\eqref{eq: TPE almost done stress power} as follows:
\begin{equation}
\label{eq: TPE almost done stress power two}
\int_{\Omega} \tensor{T} \cdot \tensor{L} \d{v} = \int_{\Omega} \hat{\tensor{T}}^{v} \cdot \tensor{L} \d{v}
+
\frac{\nsd{}}{\nsd{t}} \int_{B} \hat{W}^{e}_{\s} \d{V}.
\end{equation}
The proof can be now concluded by substituting the last of Eqs.~\eqref{eq: TPE KE int first} and~\eqref{eq: TPE almost done stress power two} into Eq.~\eqref{eq: TPE directly from PVW}, which yields Eq.~\eqref{eq: TPE rel}.
\end{proof}

\begin{lemma}[Dissipation inequality]
\label{lemma: DI}
Referring to Theorem~\ref{th: TPE}, if the system is provided no power input, i.e., if
\begin{equation}
\label{eq: TPE no power input}
\bv{u}_{g} = \bv{0},\quad
\bv{\tau}_{g} = \bv{0},
\quad \text{and} \quad
\bv{b} = \bv{0},
\end{equation}
then, for all admissible motions of the system,
\begin{equation}
\label{eq: TPE dissipation inequality}
\frac{\nsd{}}{\nsd{t}} \int_{\Omega} \kappa \d{v} + \int_{\partial\Omega_{N}} \kappa \, \bv{u} \cdot \bv{m} \d{a} + \frac{\nsd{}}{\nsd{t}} \int_{B} W^{e}_{\s} \d{v} \leq 0.
\end{equation}
\end{lemma}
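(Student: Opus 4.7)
The plan is to obtain the dissipation inequality directly from the theorem of power expended (Theorem~\ref{th: TPE}) by evaluating the right-hand side under the stated no-power-input hypotheses and then discarding a term of definite sign.

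First I would substitute $\bv{b} = \bv{0}$ and $\bv{\tau}_{g} = \bv{0}$ into Eq.~\eqref{eq: TPE rel}. The entire left-hand side of that identity vanishes, which yields
\begin{equation*}
0 = \frac{\nsd{}}{\nsd{t}} \int_{\Omega} \kappa \d{v} + \int_{\partial\Omega} \kappa \, \bv{u} \cdot \bv{m} \d{a} + \frac{\nsd{}}{\nsd{t}} \int_{B} W^{e}_{\s} \d{V} + \int_{\Omega} \hat{\tensor{T}}^{v} \cdot \tensor{L}  \d{v}.
\end{equation*}

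Next I would reduce the integral over $\partial\Omega$ to one over $\partial\Omega_{N}$. Using the decomposition in Eq.~\eqref{eq: ND boundary} together with the Dirichlet condition in Eq.~\eqref{eq: boundary conditions} and the hypothesis $\bv{u}_{g} = \bv{0}$, we have $\bv{u}(\bv{x},t) = \bv{0}$ for $\bv{x} \in \partial\Omega_{D}$. By the first of Eqs.~\eqref{eq: TPE defs}, this forces $\kappa \equiv 0$ on $\partial\Omega_{D}$, so that the contribution to the boundary integral from $\partial\Omega_{D}$ vanishes and $\int_{\partial\Omega} \kappa \, \bv{u} \cdot \bv{m} \d{a} = \int_{\partial\Omega_{N}} \kappa \, \bv{u} \cdot \bv{m} \d{a}$.

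Finally, I would invoke the constitutive dissipation inequalities in Eq.~\eqref{eq: dissipations inequality}. Splitting the integral of $\hat{\tensor{T}}^{v} \cdot \tensor{L}$ over $\Omega$ into contributions from $\Omega\setminus B_{t}$ and $B_{t}$ and using the piecewise definition of $\hat{\tensor{T}}^{v}$ in Eqs.~\eqref{eq: TPE defs}, both pieces are non-negative pointwise (and the fluid piece is strictly positive wherever $\tensor{L} \neq \tensor{0}$). Hence $\int_{\Omega} \hat{\tensor{T}}^{v} \cdot \tensor{L} \d{v} \geq 0$, and moving this non-negative term to the other side of the identity above yields exactly Eq.~\eqref{eq: TPE dissipation inequality}.

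There is no real obstacle here: the argument is a direct application of Theorem~\ref{th: TPE} combined with the sign of the viscous dissipation. The only subtlety worth highlighting in the write-up is the reduction of the boundary flux to $\partial\Omega_{N}$, which relies on the homogeneous Dirichlet data and not on any cancellation inside $\Omega$; this is what allows the inequality to retain a genuine outflux term on $\partial\Omega_{N}$ rather than on all of $\partial\Omega$.
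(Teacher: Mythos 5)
Your proof is correct and follows essentially the same route as the paper, which simply cites Theorem~\ref{th: TPE} together with the sign conditions in Eqs.~\eqref{eq: dissipations inequality}. Your only addition is to spell out the reduction of the boundary flux from $\partial\Omega$ to $\partial\Omega_{N}$ via the homogeneous Dirichlet data $\bv{u}_{g}=\bv{0}$, a detail the paper leaves implicit.
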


\begin{proof}[Proof of lemma~\ref{lemma: DI}]
Inequality~\eqref{eq: TPE dissipation inequality} is a direct consequence of Theorem~\ref{th: TPE} and Eqs.~\eqref{eq: dissipations inequality}.
\end{proof}

\begin{remark}[Energy estimates]
\label{remark: energy estimates}
Lemma~\ref{lemma: DI}, plays an important role in that it provides the form of the energy estimates and corresponding stability condition we strive to satisfy in the proposed numerical scheme.
\end{remark}

\section{Abstract Variational Formulation}
\label{sec: Reformulation of the governing equations}
We now reformulate the governing equations as a problem to be solved via a generalization of the approach proposed by \cite{BoffiGastaldiHeltaiPeskin-2008-a}.  For simplicity, we first consider the case with $\mathscr{B}$ incompressible and then the case with $\mathscr{B}$ compressible.  In either case, the principal unknown describing the motion of the solid is the displacement field, denoted by $\bv{w}$ and defined as
\begin{equation}
\label{eq: S disp def}
\bv{w}(\bv{s},t) := \bv{\zeta}(\bv{s},t) - \bv{s},
\quad \bv{s} \in B.
\end{equation}
The displacement gradient relative to the position in $B$ is denoted by $\tensor{H}$:
\begin{equation}
\label{eq: disp grad def}
\tensor{H} := \frac{\partial \bv{w}}{\partial \bv{s}}
\quad \Rightarrow \quad
\tensor{H} = \tensor{F} - \tensor{I}.
\end{equation}
Equation~\eqref{eq: S disp def} implies
\begin{equation}
\label{eq: w u rel}
\dot{\bv{w}}(\bv{s},t) = \bv{u}(\bv{x},t)\big|_{\bv{x} = \bv{\zeta}(\bv{s},t)}.
\end{equation}

\begin{remark}[Eulerian-Lagrangian information exchange and numerical approximation]
\label{remark: peskin delta use}
On the one hand, $\bv{u}(\bv{x},t)$ and $\dot{\bv{w}}(\bv{s},t)$ can be said to carry the same information in that both represent velocity.  On the other, the information carried by $\bv{u}(\bv{x},t)$ and $\dot{\bv{w}}(\bv{s},t)$ is ``packaged'' in fundamentally different ways in that $\bv{u}(\bv{x},t)$ is Eulerian and $\dot{\bv{w}}(\bv{s},t)$ is Lagrangian.  Equation~\eqref{eq: w u rel} ``regulates'' how the information exchange occurs.  As long as pointwise values of $\bv{u}(\bv{x},t)$ are available, given an $\bv{s} \in B$ and having a full-field representation of $\bv{\zeta}(\bv{s},t)$, then the evaluation of Eq.~\eqref{eq: w u rel} is straightforward.  The evaluation of $\dot{\bv{w}}(\bv{s},t)$ is not straightforward when the field $\bv{u}(\bv{x},t)$ is not available \emph{as a field}.  As stated by Peskin (see the beginning of Section~6 in \citealp{Peskin_1977_Numerical_0}),\footnote{In the cited passage, $\bv{x}_{k}$ is a discrete set of points on the immersed boundary at which the forces $\bv{f}_{k}$ responsible for expressing the \acro{FSI} are defined.}
\begin{quote}
The Lagrangian mesh upon which the boundary forces $\bv{f}_{k}$, and the boundary configuration $\bv{x}_{k}$ are stored as points which do not coincide with fluid mesh points. We therefore have the problem of interpolating	the velocity field from the fluid mesh to the boundary points and spreading the boundary forces from the boundary points to the nearly mesh points of the fluid.
\end{quote}
To understand the quote, it is important to recall that Peskin is solving the problem via \acro{FD}.  Therefore $\bv{u}(\bv{x},t)$ is available only as a set of discrete values at the mesh point defining the \acro{FD} solution domain.  To interpolate the discrete values of $\bv{u}$ at a point $\bv{x}_{k}$ on the immersed boundary (not coinciding with the mesh points for the \acro{FD} grid), Peskin presents what is Eq.~\eqref{eq: w u rel} in this paper in terms of the Dirac-$\delta$ distribution (see Eq.~(2.9) in \citealp{Peskin_1977_Numerical_0}):
\begin{equation}
\label{eq: eq 2.9 peskin}
\nsd{\bv{x}}_{k}/\nsd{t} = \bv{u}(\bv{x}_{k},t) = \int_{\bv{x}\in\Omega} \bv{u}(\bv{x},t) \delta(\bv{x} - \bv{x}_{k}) \d{v},
\end{equation}
where $\nsd{\bv{x}}_{k}/\nsd{t}$ corresponds to what we would denote by $\dot{\bv{w}}(\bv{x}_{k},t)$, and where the integral \emph{defines} the action of the Dirac-$\delta$ distribution on the function $\bv{u}(\bv{x},t)$.  In Section~6 of \cite{Peskin_1977_Numerical_0}, the $\delta$ in Eq.~\eqref{eq: eq 2.9 peskin} is replaced by an actual function whose purpose is to \emph{approximate} the behavior of the $\delta$ and allow one to carry out the convolution integral explicitly.  This strategy allows one to interpolate the discrete velocity field information at points that are not on the \acro{FD} grid.  What is important to notice here is that, formally, Eq.~\eqref{eq: eq 2.9 peskin} is Eq.~\eqref{eq: w u rel}, i.e., they serve the same purpose of transferring Eulerian information into Lagrangian information.  Peskin's rationale for choosing to work with Eq.~\eqref{eq: eq 2.9 peskin} vs.\ Eq.~\eqref{eq: w u rel} is due to the nature of his numerical scheme.  We  therefore maintain that any numerical approximation scheme for which the fields $\bv{u}(\bv{x},t)$ and $\bv{\zeta}(\bv{x},t)$ are known \emph{as fields}, does not need to confront the issue of introducing and, \emph{a fortiori}, approximating Dirac-$\delta$ distributions.  In this paper, the immersed problem is solved by \acro{FEM} and therefore it does not require the introduction of the Dirac-$\delta$ distribution either at a formal or at a practical level.  In our proposed approach we enforce Eq.~\eqref{eq: w u rel} weakly, consistently with the variational nature of the solution method we adopt.
\end{remark}

\subsection{Functional setting}
\label{subsec: Functional setting}
The principal unknowns of our fluid-structure interaction problem are the fields
\begin{equation}
\label{eq: unknowns}
\bv{u}(\bv{x},t), \quad
p(\bv{x},t), \quad \text{and} \quad
\bv{w}(\bv{s},t),
\quad\text{with $\bv{x} \in \Omega$, $\bv{s} \in B$, and $t \in [0,T)$.}
\end{equation}
The functional spaces for these fields are selected as follows:
\begin{gather}
\label{eq: functional space u}
\bv{u} \in \mathscr{V} = H_{D}^{1}(\Omega)^{d} := \Bigl\{ \bv{u} \in L^{2}(\Omega)^{d} \,\big|\, \nabla_{\bv{x}} \bv{u} \in L^{2} (\Omega)^{d \times d},  \bv{u}|_{\partial\Omega_{D}} = \bv{u}_{g} \Bigr\},
\\
\label{eq: functional space p}
p \in \mathscr{Q} := L^{2}(\Omega), \\
\label{eq: functional space w}
\bv{w} \in \mathscr{Y} := \Bigl\{ \bv{w} \in L^{2}(B)^{d} \,\big|\, \nabla_{\bv{s}} \bv{w} \in L^{\infty} (B)^{d \times d}  \Bigr\},
\end{gather}
where $\nabla_{\bv{x}}$ and $\nabla_{\bv{s}}$ denote the gradient operators relative to $\bv{x}$ and $\bv{s}$, respectively.

For convenience, we will use a prime to denote partial differentiation with respect to time:
\begin{equation}
\label{eq: prime notation}
\bv{u}'(\bv{x},t) := \frac{\partial \bv{u}(\bv{x},t)}{\partial t}
\quad \text{and} \quad
\bv{w}'(\bv{s},t) := \frac{\partial \bv{w}(\bv{s},t)}{\partial t}.
\end{equation}
Hence, in view of the discussion in the footnote on page~\pageref{footnote: material time derivative},  we have
\begin{equation}
\label{eq: material time derivatives and primes}
\dot{\bv{u}}(\bv{x},t) = \bv{u}'(\bv{x},t) + \bigl(\nabla_{\bv{x}}\bv{u}(\bv{x},t)\bigr) \bv{u}(\bv{x},t)
\quad \text{and} \quad
\dot{\bv{w}}(\bv{s},t) = \bv{w}'(\bv{s},t).
\end{equation}

\begin{remark}[Domains of definition of the fluid's behavior]
\label{rem: domains of definition}
As in every immersed method, a crucial element of our formulation is the extension of the domain of definition of the fluid's behavior to $\Omega$ as a whole.  The definitions in Eqs.~\eqref{eq: functional
  space u} and~\eqref{eq: functional space p} imply that the fields
$\bv{u}$ and $p$ are defined everywhere in $\Omega$.  Because $\bv{u}$
is defined everywhere in $\Omega$, the function
$\hat{\tensor{T}}^{v}_{\f}$ is defined everywhere in $\Omega$ as
well. For consistency, we must also extend the domain of definition of the mass density of the fluid.  Hence, we formally assume that
\begin{equation}
\label{eq: rho f domain of definition}
\rho_{\f} \in L^{\infty}(\Omega).
\end{equation}
\end{remark}

\begin{remark}[Space of test functions for the velocity]
Referring to Eq.~\eqref{eq: functional space u}, we will denote the function space containing the test functions for the velocity field by $\mathscr{V}_{0}$ and define it as:
\begin{equation}
\label{eq: space of test functions v}
\mathscr{V}_{0} = H_{0}^{1}(\Omega)^{d} := \Bigl\{ \bv{v} \in L^{2}(\Omega)^{d} \,\big|\, \nabla_{\bv{x}} \bv{v} \in L^{2} (\Omega)^{d \times d},  \bv{v}|_{\partial\Omega_{D}} = \bv{0} \Bigr\}.
\end{equation}
\end{remark}

\begin{remark}[Functional spaces for time derivatives]
The functions $\bv{u}'$ and $\bv{w}'$ are generally not expected to be elements of $\mathscr{V}$ and $\mathscr{Y}$, respectively.  Referring to the first term in Eq.~\eqref{eq: Bmomentum weak} and the first of Eq.~\eqref{eq: material time derivatives and primes}, the regularity of $\bv{u}'$ is related to the regularity of the given field $\bv{b}$ and of the boundary conditions.  The field $\bv{b}$ is often assumed to be an element of $H^{-1}(\Omega)$.  The latter can therefore be viewed as a baseline in terms of the minimum regularity that $\bv{u}'$ could have.  However, since the regularity of $\bv{b}$ is not the only factor at play, here we limit ourselves to state that $\bv{u}'$ is an element of a pivot space $\mathscr{H}_{V}$ such that
\begin{equation}
\label{eq: functional space u'}
\mathscr{V}  \subseteq \mathscr{H}_{V} \subseteq \mathscr{H}_{V}^{*} \subseteq \mathscr{V}^{*},
\end{equation}
where $\mathscr{H}_{V}^{*}$ and $\mathscr{V}^{*}$ are the dual spaces of $\mathscr{H}_{V}$ and $\mathscr{V}$, respectively.  We can be more specific in the case of $\bv{w}'$.  We start with saying that $\bv{w}'$ is an element of a pivot space $\mathscr{H}_{Y}$ such that
\begin{equation}
\label{eq: functional space w'}
\mathscr{Y}  \subseteq \mathscr{H}_{Y} \subseteq \mathscr{H}_{Y}^{*} \subseteq \mathscr{Y}^{*},
\end{equation}
where $\mathscr{H}_{Y}^{*}$ and $\mathscr{Y}^{*}$ are the dual spaces of $\mathscr{H}_{Y}$ and $\mathscr{Y}$, respectively.  Then, if Eq.~(\ref{eq: J positive}) is satisfied, using Eq.~\eqref{eq: w u rel} and standard Sobolev inequalities (see, e.g., \citealp{Evans_2010_Partial_0}), we have that, for $\bv{w} \in \mathscr{Y}$ and $\bv{u} \in \mathscr{V}$,
\begin{equation}
\label{eq: identification of HY}
\mathscr{Y}  \subseteq \mathscr{H}_{Y} \subseteq H^{1}(B)^{d}.
\end{equation}
In fact the  $H^1(B)^d$ norm of the displacement velocity can be controlled by
\begin{equation}
  \label{eq: estimate of w' norm in H^1}
  \begin{split} 
    \| \bv{w}' \|^2_{H^1(B)^d} := & \int_B \bv{w}'\cdot\bv{w}' \d V + 
    \int_B \nabla_{\bv{s}}\bv{w}'\cdot \nabla_{\bv{s}}\bv{w}' \d V \\
    = &\int_B (\bv{u}\circ\zeta)^2 \d V + 
    \int_B \Bigl(\bigl((\nabla_{\bv{x}}\bv{u})\circ\zeta\bigr) \tensor{F}\Bigr)^2\d V \\ 
    = &\int_{B_t} (\bv{u})^2 \bigl(J\circ\zeta^{-1}\bigr)^{-2}\d v + 
    \int_{B_t} \Bigl(\nabla_{\bv{x}}\bv{u} \bigl(\tensor{F}\circ\zeta^{-1}\bigr)\Bigr)^2
    \bigl(J\circ\zeta^{-1}\bigr)^{-2}\d v \\
    \leq & J_m^{-2}\left(\|\bv{u}\|^2_{L^2(B_t)^d} + 
      \|\tensor{I}+\nabla_{\bv{s}}\bv{w}\|^2_{L^\infty(B)^{d\times d}} \|\nabla_{\bv{x}}\bv{u}\|^2_{L^2(B_t)^d} 
    \right)
    \\
    \leq & J_m^{-2}\bigl(1+ \|\tensor{I}+\nabla_{\bv{s}}\bv{w}\|^2_{L^\infty(B)^{d\times d}}\bigr)
    \|\bv{u}\|^2_{H^1(B_t)^d} \\
    \leq & J_m^{-2}\bigl(1+\|\bv{w}\|^2_{\mathscr{Y}}\bigr) \| \bv{u} \|^2_{\mathscr{V}}, 
  \end{split}
\end{equation}
and we therefore take the pivot space $\mathscr{H}_Y$ to be ${H^1(B)^d}$.

\end{remark}

\subsection{Governing equations: incompressible solid}
\label{subsec: Governing equations: incompressible solid}
When the solid is incompressible, the mass density of both the fluid and the solid are constant so that $\dot{\rho} = 0$ (almost) everywhere in $\Omega$.  Cognizant of Remark~\ref{rem: domains of definition}, referring to Eqs.~\eqref{eq: boundary conditions}, Eqs.~\eqref{eq: functional space u}--\eqref{eq: functional space w}, and the constitutive response functions of both the fluid and the solid, Eqs.~\eqref{eq: Bmomentum weak} and~\eqref{eq: Bmass weak} can be written as
\begin{gather}
\label{eq: Bmomentum weak partitioned first}
\begin{multlined}[b]
\int_{\Omega} \rho_{\f}(\dot{\bv{u}} - \bv{b}) \cdot \bv{v} \d{v}
+ 
\int_{B_{t}} (\rho_{\s} - \rho_{\f}) (\dot{\bv{u}} - \bv{b}) \cdot \bv{v} \d{v}
\\
+
\int_{\Omega} \hat{\tensor{T}}_{\f} \cdot \nabla_{\bv{x}}\bv{v} \d{v}
+
\int_{B_{t}} \bigr(\hat{\tensor{T}}_{\s} - \hat{\tensor{T}}_{\f}\bigl)\cdot \nabla_{\bv{x}}\bv{v} \d{v} - \int_{\partial\Omega_{N}} \bv{\tau}_{g} \cdot \bv{v} \d{a} = 0
\quad \forall \bv{v} \in \mathscr{V}_{0}
\end{multlined}
\shortintertext{and}
\label{eq: Bmass weak partitioned}
\int_{\Omega} q \ldiv \bv{u} \d{v} = 0
\quad \forall q \in \mathscr{Q}.
\end{gather}
In addition to the momentum and mass balance laws, we need to enforce Eq.~\eqref{eq: w u rel}.   We do so weakly as follows:
\begin{equation}
\label{eq: w u rel weak}
\Phi_{B}
\int_{B} \Bigl[\dot{\bv{w}}(\bv{s},t) - \bv{u}(\bv{x},t)\big|_{\bv{x} = \map}\Bigr] \cdot \bv{y}(\bv{s}) \d{V} = 0
\quad
\forall \bv{y} \in \mathscr{H}_Y,
\end{equation}
where $\nsd{V}$ denotes the volume of an infinitesimal element of $B$, and where $\Phi_{B}$ is a constant with dimensions of mass over time divided by length cubed, i.e., dimensions such that, in 3D, the volume integral of the quantity $\Phi_{B} \dot{\bv{w}}$ has the same dimensions as a force. 

\begin{remark}[Equation~\eqref{eq: w u rel weak} and comparison with other formulations]
As discussed in the introduction, a key element of any fully variational formulation of immersed methods is (the variational formulation of) the equation enabling the tracking of the motion of the solid.  Equation~\eqref{eq: w u rel weak} is the equation in question. When discretized, it yields a set of ordinary differential equations (\acro{ODE}) relating the degrees of freedom of the extended fluid domain with the degrees of freedom of the immersed domain.  In practical applications, this relation is as general as the choice of the finite-dimensional functional subspaces approximating $\mathscr{V}$ and $\mathscr{Y}$.  Equation~\eqref{eq: w u rel weak} plays a crucial role in ensuring that the proposed finite element formulation is stable.  Equations similar to Eq.~\eqref{eq: w u rel weak} have appeared in other variational formulation of immersed methods.  With this in mind, it is important to remark that the set of \acro{ODE} for tracking the motion of the immersed solid in \cite{BoffiGastaldi_2003_A-Finite_0} was not obtained via a variational formulation.  Rather, it was obtained by setting the value of $\dot{\bv{w}}$ equal to that of $\bv{u}$ at the vertices of the triangulation discretizing the solid domain.  The first fully variational formulation of the equations of motion of the solid domain was presented almost simultaneously by \cite{Heltai_2006_The-Finite_0} and \cite{LiuKim_2007_Mathematical_0}.  However, the present authors could not find in the literature evidence pertaining to the practical implementation of the the work by \cite{LiuKim_2007_Mathematical_0}.  In \cite{BoffiGastaldiHeltaiPeskin-2008-a} the solid and the fluid mass densities are the same and are equal to one (a restriction which was removed recently in~\citealp{BoffiCavalliniGastaldi-2011-a}).  In addition, $\mathscr{Y}$ is chosen as the space of globally continuous piecewise affine functions over triangles in two-dimensions and over tetrahedrons in three dimensions (see Eq.(52) on p.~2218 in \citealp{BoffiGastaldiHeltaiPeskin-2008-a}).  Under these assumptions, Eq.~\eqref{eq: w u rel weak} yields a system of \acro{ODE} of the type $\dot{\bv{w}}_{k}(t) = \bv{u}(\bv{x}_{k},t)$, where $k$ ranges over the index set of the vertices of the triangulation of the solid domain.  That is, for the purpose of tracking the motion of the solid, the formulation by \cite{BoffiGastaldiHeltaiPeskin-2008-a} yields the same equations as those in \cite{BoffiGastaldi_2003_A-Finite_0}.  Finally, again as indicated in the introduction, to the best of the authors' knowledge, the approach to the determination of the motion of the solid expressed via Eq.~\eqref{eq: w u rel weak} has only been explicitly discussed by \cite{Heltai_2006_The-Finite_0}, \cite{LiuKim_2007_Mathematical_0}, and~\cite{BlancoFeijo_2008_A-Variational_0}.  However, no general numerical implementations have been demonstrated.  This particular aspect of the current formulation is one of the thrusts of this paper.  A discussion of how Eq.~\eqref{eq: w u rel weak} is practically implemented in a \acro{FEM} code is presented later.
\end{remark}

Going back to the discussion of the problem's governing equations, we now anticipate that our proposed numerical approximation of Eqs.~\eqref{eq: Bmomentum weak partitioned first}--\eqref{eq: w u rel weak} is based on the use of two independent triangulations, namely, one of $\Omega$ and one of $B$.  The fields $\bv{u}$ and $p$, as well as their corresponding test functions, will be expressed via finite element spaces supported by the triangulation of $\Omega$.  By contrast, the field $\bv{w}$ will be expressed via a finite element space supported by the triangulation of $B$.  Motivated by this fact, we now reformulate every integral over $B_{t}$ as a corresponding integral over $B$.  Such a reformulation affects only Eq.~\eqref{eq: Bmomentum weak partitioned first}, which can be rewritten as
\begin{multline}
\label{eq: Bmomentum weak partitioned last}
\int_{\Omega} \rho_{\f} (\dot{\bv{u}} - \bv{b}) \cdot \bv{v} \d{v}
- \int_{\Omega} p \ldiv \bv{v} \d{v}
+
\int_{\Omega} \hat{\tensor{T}}^{v}_{\f} \cdot \nabla_{\bv{x}}\bv{v} \d{v}
-\int_{\partial\Omega_{N}} \bv{\tau}_{g} \cdot \bv{v} \d{a}
\\
+ 
\int_{B} \bigl\{[\rho_{\s_{0}}(\bv{s}) - \rho_{\f} J(\bv{s},t)] [\dot{\bv{u}}(\bv{x},t) - \bv{b}(\bv{x},t)]\cdot \bv{v}(\bv{x})\bigr|_{\bv{x} = \map} \d{V}
\\
+
\int_{B} J(\bv{s},t) \bigl(\hat{\tensor{T}}^{v}_{\s} - \hat{\tensor{T}}^{v}_{\f}\bigr) \cdot \nabla_{\bv{x}}\bv{v}(\bv{x})\bigr|_{\bv{x} = \map} \d{V}
\\
+
\int_{B} \hat{\tensor{P}}^{e}_{\s} \, \trans{\tensor{F}}(\bv{s},t) \cdot \nabla_{\bv{x}}\bv{v}(\bv{x})\bigr|_{\bv{x} = \map} \d{V}
=
0
\quad \forall \bv{v} \in \mathscr{V}_{0}.
\end{multline}
The last three terms in Eq.~\eqref{eq: Bmomentum weak partitioned last} have been written so as to explicitly express their evaluation process.  While it is true that, for an incompressible solid $J(\bv{s},t) = 1$ for all $\bv{s} \in B$ and for all $t \in [0,T)$, this occurrence may not be satisfied in an approximate formulation of the problem.  Therefore, we prefer to retain the term $J(\bv{s},t)$ in our formulation to contribute to its stability.

\begin{remark}[Dirac-$\delta$s are not intrinsic to immersed methods]
As eloquently stated by \cite{BoffiGastaldi_2003_A-Finite_0} in their introduction, ``The \acro{IB} method is at the same time a mathematical formulation and a numerical scheme.''  As such, and as argued in Remark~\ref{remark: peskin delta use}, the use of the Dirac-$\delta$ distribution was justified by convenience and a preference for a specific solution method rather by a necessity intrinsic to the physics of the problem.  One of the main thrusts of the works by \cite{BoffiGastaldi_2003_A-Finite_0,Heltai_2006_The-Finite_0,BoffiGastaldiHeltaiPeskin-2008-a,LiuKim_2007_Mathematical_0,BlancoFeijo_2008_A-Variational_0} is precisely that of showing that an immersed method can be formulated without any reference whatsoever to the use of Dirac-$\delta$ distributions.  Again, we wish to point out that one of the objectives of the present work is precisely that of demonstrating an implementation technique that does not rely on the approximation of the Dirac-$\delta$ distribution.  This fact is one of the distinguishing features of our work when compared to other approaches currently in the literature (see, e.g., \citealp{WangZhang_2009_On-Computational_0}).
\end{remark}

We now define various operators that will be used to state our finite element formulation.  These definitions rely on the concept of duality.  To make explicit the declaration of the spaces in duality, we will use the following notation:
\begin{equation}
\label{eq: duality notation}
\prescript{}{V^{*}}{\bigl\langle} \psi, \phi \big\rangle_{V},
\end{equation}
in which, given a vector space $V$ and its dual $V^{*}$, $\psi$ and $\phi$ are elements of the vector spaces $V^{*}$ and $V$, respectively, and where $\prescript{}{V^{*}}{\bigl\langle} \bullet, \bullet \big\rangle_{V}$ identifies the duality product between $V^{*}$ and  $V$.  Also, to be explicit on how certain terms depend on the selected unknown fields, we introduce the following shorthand notation
\begin{align}
\label{eq: fv stress abbreviated}
\hat{\tensor{T}}^{v}_{\f}[\bv{u}] &= \mu_{\f} \bigl[\nabla_{\bv{x}}\bv{u}(\bv{x},t) + \trans{(\nabla_{\bv{x}}\bv{u}(\bv{x},t))} \bigr],
\\
\label{eq: sv stress abbreviated}
\hat{\tensor{T}}^{v}_{\s}[\bv{u}] &= \mu_{\s} \bigl[\nabla_{\bv{x}}\bv{u}(\bv{x},t) + \trans{(\nabla_{\bv{x}}\bv{u}(\bv{x},t))} \bigr],
\\
\label{eq: Fw abbreviated}
\tensor{F}[\bv{w}] &= \tensor{I} + \nabla_{s}\bv{w}(\bv{s},t),
\\
\label{eq: Jw abbreviated}
J[\bv{w}] &= \det \tensor{F}[\bv{w}],
\\
\label{eq: se stress abbreviated}
\hat{\tensor{P}}^{e}_{\s}[\bv{w}] &= \frac{\partial\hat{W}^{e}_{\s}(\tensor{F})}{\partial{F}}\bigg|_{\tensor{F} = \tensor{F}[\bv{w}]}.
\end{align}
Finally, to help identify the domain and range of these operators, we establish the following convention.  We will use the numbers $1$, $2$, and $3$ to identify the spaces $\mathscr{V}$, $\mathscr{Q}$, and $\mathscr{Y}$, respectively.  We will use the Greek letter $\alpha$, $\beta$, and $\gamma$ to identify the spaces $\mathscr{V}^{*}$, $\mathscr{Q}^{*}$, and $\mathscr{Y}^{*}$, respectively.  Then, a Greek letter followed by a number will identify an operator whose domain is the space corresponding to the number, and whose co-domain is in the space corresponding to the Greek letter.  For example, the notations
\begin{equation}
\label{eq: space convention}
\mathcal{E}_{\alpha 2}
\quad \text{and} \quad
\mathcal{E}_{\alpha 2} \, p
\end{equation}
will identify a map ($\mathcal{E}_{\alpha 2}$) from $\mathscr{Q}$ into  $\mathscr{V}^{*}$ and the action of this map ($\mathcal{E}_{\alpha 2}\, p \in \mathscr{V}^{*}$) on the field $p \in \mathscr{Q}$, respectively.  If an operators has only one subscript, that subscript identifies the space containing the range of the operator. For simplicity, the pivot spaces $\mathscr{H}_{V}$ and $\mathscr{H}_{Y}$ and their duals will inherit the same notation as $\mathscr{V}$ and $\mathscr{Y}$. With this in mind, let
\begin{alignat}{3}
\label{eq: MOmega def}
\mathcal{M}_{\svs\sv} &: \mathscr{H}_{V} \to \mathscr{V}^{*},
&\quad
\prescript{}{\mathscr{V}^{*}}{\bigl\langle}
\mathcal{M}_{\svs\sv}\bv{u},\bv{v}
\big\rangle_{\mathscr{V}} 
&:= 
\int_{\Omega} \rho_{\f} \, \bv{u} \cdot \bv{v} \d{v}
&\quad
&\forall \bv{u} \in \mathscr{H}_{V}, \forall \bv{v} \in \mathscr{V}_{0},
\\
\label{eq: NOmega def}
\mathcal{N}_{\svs\sv}(\bv{u})
&:
\mathscr{V} \to \mathscr{V}^{*},
&\quad
\prescript{}{\mathscr{V}^{*}}{\bigl\langle}
\mathcal{N}_{\svs\sv}(\bv{u}) \bv{w} , \bv{v}
\big\rangle_{\mathscr{V}} 
&:=
\int_{\Omega} \rho_{\f} (\nabla_{\bv{x}} \bv{w})\bv{u} \cdot \bv{v} \d{v}
&\quad
&\forall \bv{u},\bv{w} \in \mathscr{V}, \forall \bv{v} \in \mathscr{V}_{0},
\\
\label{eq: AOmega def}
\mathcal{D}_{\svs\sv} &: \mathscr{V} \to \mathscr{V}^{*},
&\quad
\prescript{}{\mathscr{V}^{*}}{\bigl\langle}
\mathcal{D}_{\svs\sv}\bv{u},\bv{v}
\big\rangle_{\mathscr{V}} 
&:= 
\int_{\Omega} \hat{\tensor{T}}^{v}_{\f}[\bv{u}] \cdot \nabla_{\bv{x}}\bv{v} \d{v}
&\quad
&\forall \bv{u} \in \mathscr{V}, \forall\bv{v} \in \mathscr{V}_{0},
\\
\label{eq: BBeta1 def}
\mathcal{B}_{\sqs\sv} &: \mathscr{V} \to \mathscr{Q}^{*},
&\quad
\prescript{}{\mathscr{Q}^{*}}{\bigl\langle}
\mathcal{B}_{\sqs\sv} \bv{u}, q
\big\rangle_{\mathscr{Q}} &:= -\int_{\Omega} q \ldiv \bv{u} \d{v}
&\quad
&\forall q \in \mathscr{Q}, \forall \bv{u} \in \mathscr{V},
\\
\label{eq: BBeta1T def}
\trans{\mathcal{B}_{\sqs\sv}} &: \mathscr{Q} \to \mathscr{V}^{*},
&\quad
\prescript{}{\mathscr{V}^{*}}{\bigl\langle}
\trans{\mathcal{B}}_{\sqs\sv} q, \bv{u}
\big\rangle_{\mathscr{V}} &:= -\int_{\Omega} q \ldiv \bv{u} \d{v}
&\quad
&\forall q \in \mathscr{Q}, \forall \bv{u} \in \mathscr{V}.
\end{alignat}
The operators defined in Eqs.~\eqref{eq: MOmega def}--\eqref{eq: BBeta1T def} concern terms that are typical of the Navier-Stokes equations and will be referred to as the Navier-Stokes component of the problem.  As in other immersed methods, these operators have their support in $\Omega$ as a whole.

We now define those operators in our formulation that have their support over $B$ but do not contain prescribed body forces or boundary terms.
\begin{align}
\label{eq: pseudo mass BOmega def}
\begin{split}
&\delta\mathcal{M}_{\svs\sv}(\bv{w}) : \mathscr{H}_{V} \to \mathscr{V}^{*},~\forall \bv{w} \in \mathscr{Y}, \forall\bv{u} \in \mathscr{H}_{V}, \forall\bv{v} \in \mathscr{V}_{0},
\\
&\qquad
\prescript{}{\mathscr{V}^{*}}{\bigl\langle}
\delta\mathcal{M}_{\svs\sv}(\bv{w}) \bv{u}, \bv{v}
\big\rangle_{\mathscr{V}} := \int_{B} \bigl\{\bigl(\rho_{\s_{0}}(\bv{s}) - \rho_{\f}J[\bv{w}] \bigr) \bv{u}(\bv{x}) \cdot \bv{v}(\bv{x})\bigl\}_{\bv{x}=\map[w]} \d{V},
\end{split}
\\
\label{eq: pseudo trilinear BOmega def}
\begin{split}
&\delta\mathcal{N}_{\svs\sv}(\bv{w},\bv{\ell},\bv{z}) : \mathscr{V} \to \mathscr{V}^{*},~\forall \bv{w}, \bv{\ell} \in \mathscr{Y}, \forall \bv{u},\bv{z} \in \mathscr{V}, \forall\bv{v}\in\mathscr{V}_{0},
\\
&\qquad
\begin{aligned}
\prescript{}{\mathscr{V}^{*}}{\bigl\langle}
\delta\mathcal{N}_{\svs\sv}(\bv{w},\bv{\ell},\bv{z})\bv{u}, \bv{v}
\big\rangle_{\mathscr{V}} &:= \int_{B} 
\bigl\{\bigl[
(\rho_{\s_{0}}(\bv{s}) \nabla_{\bv{x}}\bv{u}(\bv{x})\bv{\ell}(\bv{s})
\\
&\qquad\quad\quad -\rho_{\f} J[\bv{w}] \nabla_{\bv{x}}\bv{u}(\bv{x})\bv{z}(\bv{x})\bigr] \cdot \bv{v}(\bv{x})\bigr\}_{\bv{x}=\map[w]} \d{V}.
\end{aligned}
\end{split}
\\
\label{eq: A BOmega def}
\begin{split}
&\delta\mathcal{D}_{\svs\sv}(\bv{w}) : \mathscr{V} \to \mathscr{V}^{*},~\forall \bv{w} \in \mathscr{Y}, \forall \bv{u}\in\mathscr{V}, \forall\bv{v} \in \mathscr{V}_{0},
\\
&\qquad
\begin{aligned}[b]
\prescript{}{\mathscr{V}^{*}}{\bigl\langle}
\delta\mathcal{D}_{\svs\sv}(\bv{w}) \bv{u}, \bv{v}
\big\rangle_{\mathscr{V}} &:= 
\int_{B}
\Bigl[ J[\bv{w}]
\bigl(
\hat{\tensor{T}}^{v}_{\s}[\bv{u}] - \hat{\tensor{T}}^{v}_{\f}[\bv{u}] \bigr)
\cdot \nabla_{\bv{x}} \bv{v}(\bv{x})\Bigr]_{\bv{x}=\map[w]} \d{V},
\end{aligned}
\end{split}
\\
\label{eq: pseudo stiffness BOmega def}
\begin{split}
&\mathcal{A}_{\svs}(\bv{w},\bv{h}) \in \mathscr{V}^{*},~\forall \bv{w}, \bv{h} \in \mathscr{Y},
\forall\bv{v} \in \mathscr{V}_{0}
\\
&\qquad
\begin{aligned}[b]
\prescript{}{\mathscr{V}^{*}}{\bigl\langle}
\mathcal{A}_{\svs}(\bv{w},\bv{h}), \bv{v}
\big\rangle_{\mathscr{V}} &:= 
\int_{B}
\bigl[
\hat{\tensor{P}}^{e}_{\s}[\bv{w}] \trans{\tensor{F}}[\bv{h}]
\cdot \nabla_{\bv{x}} \bv{v}(\bv{x})\bigr]_{\bv{x}=\map[h]} \d{V}.
\end{aligned}
\end{split}
\end{align}

We now define operators with support in $B$ that express the coupling of the velocity fields defined over $\Omega$ and over $B$.  Specifically, we have
\begin{align}
\label{eq: MB def}
\begin{split}
&\mathcal{M}_{\sys\sy} : \mathscr{H}_{Y} \to \mathscr{H}_{Y}^{*},~\forall \bv{w},\bv{y} \in \mathscr{H}_{Y},
\\
&\qquad
\prescript{}{\mathscr{H}_{Y}^{*}}{\bigl\langle}
\mathcal{M}_{\sys\sy}\bv{w}, \bv{y}
\big\rangle_{\mathscr{H}_Y} := \Phi_{B} \int_{B} \bv{w} \cdot \bv{y}(\bv{s}) \d{V},
\end{split}
\\
\label{eq: MGamma def}
\begin{split}
&\mathcal{M}_{\sys\sv}(\bv{w}) : \mathscr{V} \to \mathscr{H}_{Y}^{*},~\forall \bv{u} \in \mathscr{V}, \forall \bv{w} \in \mathscr{Y}, \forall \bv{y} \in \mathscr{H}_{Y},
\\
&\qquad
\prescript{}{\mathscr{H}_{Y}^{*}}{\bigl\langle}
\mathcal{M}_{\sys\sv}(\bv{w}) \bv{u}, \bv{y}
\big\rangle_{\mathscr{H}_{Y}} := \Phi_{B} \int_{B} \bv{u}(\bv{x})\big|_{\bv{x} = \map[w]} \cdot \bv{y}(\bv{s}) \d{V}
\end{split}
\\
\label{eq: Mgamma1T def}
\begin{split}
&\trans{\mathcal{M}}_{\sys\sv}(\bv{w}) : \mathscr{H}_{Y} \to \mathscr{V}^{*},~\forall \bv{u} \in \mathscr{V}, \forall \bv{w} \in \mathscr{Y}, \forall \bv{y} \in  \mathscr{H}_{Y} 
\\
&\qquad
\prescript{}{\mathscr{V}^{*}}{\bigl\langle}
\trans{\mathcal{M}}_{\sys\sv}(\bv{w}) \bv{y}, \bv{u}
\big\rangle_{\mathscr{V}} :=  \Phi_{B} \int_{B} \bv{u}(\bv{x})\big|_{\bv{x} = \map[w]} \cdot \bv{y}(\bv{s}) \d{V}
\end{split}
\end{align}

Finally, we define the operators that express the action of prescribed body and surface forces.
\begin{align}
\label{eq: Forcing Omega def}
\begin{split}
&\mathcal{F}_{\svs} \in \mathscr{V}^{*},~\forall \bv{b} \in H^{-1}(\Omega), \forall \bv{\tau}_{g} \in H^{-\frac 1 2}(\partial \Omega_N), \forall \bv{v} \in \mathscr{V}_{0}
\\
&\qquad\prescript{}{\mathscr{V}^{*}}{\bigl\langle}
\mathcal{F}_{\svs}, \bv{v}
\big\rangle_{\mathscr{V}} :=
\int_{\Omega} \rho_{\f} \, \bv{b} \cdot \bv{v} \d{v} + \int_{\partial\Omega_{N}} \bv{\tau}_g \cdot \bv{v} \d{a}
\end{split}
\\
\label{eq: Forcing B def}
\begin{split}
&\mathcal{G}_{\svs}(\bv{w}) \in \mathscr{V}^{*},~\forall \bv{w} \in \mathscr{Y}, \forall \bv{b} \in H^{-1}(\Omega), \forall \bv{v} \in \mathscr{V}_{0}
\\
&\qquad\prescript{}{\mathscr{V}^{*}}{\bigl\langle}
\mathcal{G}_{\svs}(\bv{w}), \bv{v}
\big\rangle_{\mathscr{V}} :=
\int_{B} \bigl(\rho_{\s_{0}}(\bv{s}) - \rho_{\f} J[\bv{w}] \bigr) \bv{b} \cdot \bv{v}(\bv{x})\bigr|_{\bv{x} = \map[w]} \d{v}.
\end{split}
\end{align}

\begin{remark}[Dependence on the motion of the solid]
In defining the operators in Eqs.~\eqref{eq: MOmega def}--\eqref{eq: Forcing B def}, we have used a notation meant to point out explicitly the role played by the field $\bv{w}$ in the evaluation of integrals over $B$.  For the operator $\mathcal{A}_{\svs}$ in Eq.~\eqref{eq: pseudo stiffness BOmega def}, the motion of the solid plays a double role, one pertaining to the elastic response of the solid (through $\bv{w}$) and the other pertaining to the map (through $\bv{h}$) functioning as a change of variables of integration.
\end{remark}

It is convenient to explicitly separate the double role of the displacement $\bv{w}$ in the elastic operator $\mathcal{A}_{\svs}(\bv{w},\bv{w})$, by reformulating it in terms of a \emph{change of variable} operator and in terms of a purely Lagrangian elastic operator:
\begin{align}
\label{eq: S def}
\begin{split}
&\mathcal{S}_{\svs\sys}(\bv{h}): \mathscr{H}_{Y}^{*} \to \mathscr{V}^{*},~\forall \bv{y}^{*} \in \mathscr{H}_{Y}^{*}, \forall \bv{h} \in \mathscr{Y}, \forall \bv{v} \in \mathscr{V}_{0}
\\
&\qquad
\prescript{}{\mathscr{V}^{*}}{\bigl\langle}
\mathcal{S}_{\svs\sys}(\bv{h}) \bv{y}^{*},\bv{v}
\big\rangle_{\mathscr{V}} 
:= 
\prescript{}{\mathscr{H}_{Y}^{*}}{\bigl\langle}
\bv{y}^{*},\bv{v}(\bv{x})\big|_{\bv{x} = \bv{s} + \bv{h}(\bv{s})}
\big\rangle_{\mathscr{H}_{Y}},
\end{split}
\\
\label{eq: Agamma def}
\begin{split}
&\mathcal{A}_{\sys}(\bv{w}) \in \mathscr{H}_{Y}^{*},~\forall \bv{w}\in \mathscr{Y}, \forall \bv{y} \in \mathscr{H}_{Y}
\\
&\qquad
\prescript{}{\mathscr{H}_{Y}^{*}}{\bigl\langle}
\mathcal{A}_{\sys}(\bv{w}), \bv{y}
\big\rangle_{\mathscr{H}_{Y}} := \int_{B}
\hat{\tensor{P}}_{\s}^{e}[\bv{w}] \cdot \nabla_{\bv{s}}\bv{y} \d{V}.
\end{split}
\end{align}

The operator $\mathcal{S}_{\svs\sys}(\bv{h})$ is the map that allows us to express the duality over $\mathscr{H}_{Y}$ in terms of that over $\mathscr{V}$ through the deformation $\bv{h}$.  As such, $\mathcal{S}_{\svs\sys}(\bv{h})$ puts into communication the Lagrangian and Eulerian descriptions of the motion of the immersed domain. The operator in Eq.~\eqref{eq: Agamma def} is a typical component of classical \acro{FEM} approaches to elasticity and is the (fully Lagrangian form of the) stiffness operator of the immersed solid.  

One of the crucial components of any solutions method for \acro{FSI} problems is the communication between the Lagrangian and Eulerian descriptions of the physics of the solid domain.  In this context, the operator $\mathcal{A}_{\svs}(\bv{w},\bv{h})$, defined in Eq.~\eqref{eq: pseudo stiffness BOmega def}, can be said to be the Eulerian counterpart of the operator $\mathcal{A}_{\sys}(\bv{w})$ as is shown by the the following result.
\begin{theorem}[Eulerian and Lagrangian elastic stiffness operators of the immersed domain]
\label{th: eulerian vs lagrangian stiffness}
With reference to the definitions in Eqs.~\eqref{eq: pseudo stiffness BOmega def}, \eqref{eq: S def}, and~\eqref{eq: Agamma def}, we have
\begin{equation}
\label{eq: EulerianLagrangianElasticity}
\mathcal{A}_{\svs}(\bv{w},\bv{h}) = \mathcal{S}_{\svs\sys}(\bv{h}) \mathcal{A}_{\sys}(\bv{w})
\quad \text{and} \quad 
\mathcal{S}_{\svs\sys}(\bv{h}) = \trans{\mathcal{M}}_{\sys\sv}(\bv{h}) \mathcal{M}_{\sys\sy}^{-1},
\end{equation}
where $\mathcal{S}_{\svs\sys}(\bv{h}) \mathcal{A}_{\sys}(\bv{w})$ and $\trans{\mathcal{M}}_{\sys\sv}(\bv{h}) \mathcal{M}_{\sys\sy}^{-1}$ indicate the composition of the operators $\mathcal{S}_{\svs\sys}(\bv{h})$ and $\mathcal{A}_{\sys}(\bv{w})$ and of the operators $\trans{\mathcal{M}}_{\sys\sv}(\bv{h})$ and $\mathcal{M}_{\sys\sy}^{-1}$, respectively.
\end{theorem}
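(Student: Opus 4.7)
The statement is an identity between operators, so the plan is to test both sides against an arbitrary $\bv{v}\in\mathscr{V}_{0}$ and reduce each equality to the definitions in Eqs.~\eqref{eq: pseudo stiffness BOmega def}, \eqref{eq: S def}, \eqref{eq: Agamma def}, \eqref{eq: MB def}, and \eqref{eq: Mgamma1T def}, together with the chain rule. Throughout, the estimate in Eq.~\eqref{eq: estimate of w' norm in H^1} is what guarantees that the pullback $\bv{s}\mapsto \bv{v}(\bv{s}+\bv{h}(\bv{s}))$ belongs to $\mathscr{H}_{Y}=H^{1}(B)^{d}$, so the dualities on the right-hand sides are well defined; I would invoke that fact once at the beginning.

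\textbf{First identity.} I would start from the right-hand side: for any $\bv{v}\in\mathscr{V}_{0}$,
\begin{equation*}
\dualV{\mathcal{S}_{\svs\sys}(\bv{h})\mathcal{A}_{\sys}(\bv{w}),\bv{v}}
= \dualY{\mathcal{A}_{\sys}(\bv{w}),\bv{v}\circ(\bv{s}+\bv{h})}
= \int_{B} \hat{\tensor{P}}^{e}_{\s}[\bv{w}] \cdot \nabla_{\bv{s}}\bigl(\bv{v}(\bv{s}+\bv{h}(\bv{s}))\bigr)\d{V},
\end{equation*}
where the first equality uses the definition of $\mathcal{S}_{\svs\sys}(\bv{h})$ and the second the definition of $\mathcal{A}_{\sys}(\bv{w})$ with test function $\bv{y}(\bv{s})=\bv{v}(\bv{s}+\bv{h}(\bv{s}))$. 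The chain rule gives $\nabla_{\bv{s}}\bigl(\bv{v}\circ(\bv{s}+\bv{h})\bigr) = (\nabla_{\bv{x}}\bv{v})|_{\bv{x}=\bv{s}+\bv{h}(\bv{s})}\,\tensor{F}[\bv{h}]$, and the tensor identity $\tensor{A}\cdot\tensor{B}\tensor{C}=\tensor{A}\trans{\tensor{C}}\cdot\tensor{B}$ already used in Eq.~\eqref{eq: TPE pull back elastic work} converts the integrand into $\hat{\tensor{P}}^{e}_{\s}[\bv{w}]\trans{\tensor{F}}[\bv{h}]\cdot(\nabla_{\bv{x}}\bv{v})|_{\bv{x}=\bv{s}+\bv{h}(\bv{s})}$, which is exactly the integrand defining $\mathcal{A}_{\svs}(\bv{w},\bv{h})$ in Eq.~\eqref{eq: pseudo stiffness BOmega def}. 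Since $\bv{v}\in\mathscr{V}_{0}$ is arbitrary, the two elements of $\mathscr{V}^{*}$ coincide.

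\textbf{Second identity.} Pick any $\bv{y}^{*}\in\mathscr{H}_{Y}^{*}$ and, using that $\mathcal{M}_{\sys\sy}:\mathscr{H}_{Y}\to\mathscr{H}_{Y}^{*}$ is an isomorphism (it is $\Phi_{B}$ times the Riesz map associated to the $L^{2}(B)^{d}$ inner product), write $\bv{y}:=\mathcal{M}_{\sys\sy}^{-1}\bv{y}^{*}\in\mathscr{H}_{Y}$. The defining relation $\mathcal{M}_{\sys\sy}\bv{y}=\bv{y}^{*}$ expands to
\begin{equation*}
\dualY{\bv{y}^{*},\bv{z}} = \Phi_{B}\int_{B}\bv{y}\cdot\bv{z}(\bv{s})\d{V}\qquad\forall\,\bv{z}\in\mathscr{H}_{Y}.
\end{equation*}
Specializing to $\bv{z}(\bv{s})=\bv{v}(\bv{s}+\bv{h}(\bv{s}))$ (which is in $\mathscr{H}_{Y}$ by Eq.~\eqref{eq: estimate of w' norm in H^1}) and applying the definition of $\mathcal{S}_{\svs\sys}(\bv{h})$ gives
\begin{equation*}
\dualV{\mathcal{S}_{\svs\sys}(\bv{h})\bv{y}^{*},\bv{v}}
= \Phi_{B}\int_{B}\bv{y}\cdot\bv{v}(\bv{s}+\bv{h}(\bv{s}))\d{V}
= \dualV{\trans{\mathcal{M}}_{\sys\sv}(\bv{h})\bv{y},\bv{v}},
\end{equation*}
where the last equality is the definition of $\trans{\mathcal{M}}_{\sys\sv}(\bv{h})$ in Eq.~\eqref{eq: Mgamma1T def}. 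Replacing $\bv{y}$ by $\mathcal{M}_{\sys\sy}^{-1}\bv{y}^{*}$ concludes the identification, once again because $\bv{v}\in\mathscr{V}_{0}$ and $\bv{y}^{*}\in\mathscr{H}_{Y}^{*}$ are arbitrary.

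\textbf{Expected difficulty.} There is no analytical obstacle: both identities are essentially change-of-variable and definition-chasing computations. The one subtle point is the admissibility of the pullback $\bv{v}\circ(\bv{s}+\bv{h})$ as an element of $\mathscr{H}_{Y}$, needed for the duality products on the right-hand sides to make sense and for the chain rule to apply in the $H^{1}$ sense; this is precisely what Eq.~\eqref{eq: estimate of w' norm in H^1} (together with $J[\bv{h}]\ge J_{m}>0$ from Eq.~\eqref{eq: J positive}) is designed to supply, so I would cite it explicitly at the outset rather than reprove it.
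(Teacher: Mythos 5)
Your proposal is correct and follows essentially the same route as the paper: the first identity via the definition of $\mathcal{S}_{\svs\sys}(\bv{h})$, the chain rule, and the tensor identity $\tensor{A}\cdot\tensor{B}\tensor{C}=\tensor{A}\trans{\tensor{C}}\cdot\tensor{B}$, and the second by testing against the pulled-back function and invoking the Riesz-map invertibility of $\mathcal{M}_{\sys\sy}$ (the paper equivalently shows $\mathcal{S}_{\svs\sys}(\bv{h})\mathcal{M}_{\sys\sy}=\trans{\mathcal{M}}_{\sys\sv}(\bv{h})$ and then inverts). Your explicit remark on the admissibility of the pullback $\bv{v}\circ(\bv{s}+\bv{h})$ in $\mathscr{H}_{Y}$ is a welcome clarification but does not change the argument.
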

\begin{proof}
By the definitions in Eqs.~\eqref{eq: S def} and~\eqref{eq: Agamma def}, $\forall \bv{w},\bv{h} \in \mathscr{Y}$ and $\forall \bv{v} \in \mathscr{V}_{0}$, we have
\begin{equation}
\prescript{}{\mathscr{V}^{*}}{\bigl\langle}
\mathcal{S}_{\svs\sys}(\bv{h}) \mathcal{A}_{\sys}(\bv{w}), \bv{v}
\big\rangle_{\mathscr{V}}  = \prescript{}{\mathscr{H}_{Y}^{*}}{\bigl\langle}
\mathcal{A}_{\sys}(\bv{w}), \bv{v}(\bv{x})\big|_{\bv{x} = \bv{s} + \bv{h}(\bv{s})}
\big\rangle_{\mathscr{H}_{Y}},
\end{equation}
which, using again the definition in Eq.~\eqref{eq: Agamma def}, gives
\begin{equation}
\label{eq: SAGamma}
\begin{aligned}[b]
\prescript{}{\mathscr{V}^{*}}{\bigl\langle}
\mathcal{S}_{\svs\sys}(\bv{h}) \mathcal{A}_{\sys}(\bv{w}), \bv{v}
\big\rangle_{\mathscr{V}} &= \int_{B}
\hat{\tensor{P}}_{\s}^{e}[\bv{w}] \cdot \nabla_{\bv{s}}\bv{v}(\bv{x})\big|_{\bv{x} = \map[h]} \d{V}
\\
&= \int_{B}
\hat{\tensor{P}}_{\s}^{e}[\bv{w}] \cdot \nabla_{\bv{x}}\bv{v}(\bv{x})\big|_{\bv{x} = \map[h]} \tensor{F}[\bv{h}] \d{V}
\\
&= \int_{B}
\hat{\tensor{P}}_{\s}^{e}[\bv{w}] \trans{\tensor{F}}[\bv{h}] \cdot \nabla_{\bv{x}}\bv{v}(\bv{x})\big|_{\bv{x} = \map[h]} \d{V},
\end{aligned}
\end{equation}
where the second line of the above equation was obtained by a standard application of the chain rule.  Comparing the result in Eq.~\eqref{eq: SAGamma} with the definition in Eq.~\eqref{eq: pseudo stiffness BOmega def}, the first of Eqs.~\eqref{eq: EulerianLagrangianElasticity} follows.  Next, again applying the definition in Eq.~\eqref{eq: S def}, $\forall \bv{w},\bv{h} \in \mathscr{Y}$ and $\forall \bv{v} \in \mathscr{V}_{0}$, we have
\begin{equation}
\prescript{}{\mathscr{V}^{*}}{\bigl\langle}
\mathcal{S}_{\svs\sys}(\bv{h}) \mathcal{M}_{\sys\sy}\bv{w}, \bv{v}
\big\rangle_{\mathscr{V}}  = \prescript{}{\mathscr{H}_{Y}^{*}}{\bigl\langle}
\mathcal{M}_{\sys\sy}\bv{w}, \bv{v}(\bv{x})\big|_{\bv{x} = \bv{s} + \bv{h}(\bv{s})}
\big\rangle_{\mathscr{H}_{Y}},
\end{equation}
which, by the definitions in Eq.~\eqref{eq: MB def} and Eq.~\eqref{eq: Mgamma1T def}, gives
\begin{equation}
\begin{aligned}[b]
\prescript{}{\mathscr{V}^{*}}{\bigl\langle}
\mathcal{S}_{\svs\sys}(\bv{h}) \mathcal{M}_{\sys\sy}\bv{w}, \bv{v}
\big\rangle_{\mathscr{V}} &= \Phi_{B} \int_{B} \bv{w} \cdot \bv{v}(\bv{x})\big|_{\bv{x} = \bv{s} + \bv{h}(\bv{s})} \d{V}
\\
&= 
\prescript{}{\mathscr{V}^{*}}{\bigl\langle}
\trans{\mathcal{M}}_{\sys\sv}(\bv{h}) \bv{w}, \bv{v}
\big\rangle_{\mathscr{V}},
\end{aligned}
\end{equation}
from which we deduce that 
\begin{equation}
\mathcal{S}_{\svs\sys}(\bv{h}) \mathcal{M}_{\sys\sy} = \trans{\mathcal{M}}_{\sys\sv}(\bv{h}).
\end{equation}
Since the operator $\mathcal{M}_{\sys\sy}$ is the Riesz identity between $\mathscr{H}_Y$ and $\mathscr{H}^*_Y$,  it is invertible and the second of Eqs.~\eqref{eq: EulerianLagrangianElasticity} follows.

\end{proof}
The operators defined above allow us to formally restate the overall problem described by Eqs.~\eqref{eq: Bmomentum weak partitioned last}, \eqref{eq: Bmass weak partitioned}, and~\eqref{eq: w u rel weak} as follows:
\begin{problem}[Incompressible fluid, incompressible solid: dual formulation]
\label{prob: IFIS}
Given initial conditions $\bv{u}_{0} \in \mathscr{V}$ and $\bv{w}_{0} \in \mathscr{Y}$, for all $t \in (0,T)$ find $\bv{u}(\bv{x},t) \in \mathscr{V}$, $p(\bv{x},t) \in \mathscr{Q}$, and $\bv{w}(\bv{s},t) \in \mathscr{Y}$ such that
\begin{align}
\label{eq: BLM Formal dual}
\begin{aligned}[b]
&\mathcal{M}_{\svs\sv}\bv{u}' +  \mathcal{N}_{\svs\sv}(\bv{u})\bv{u} + 
\mathcal{D}_{\svs\sv}\bv{u} + \trans{(\mathcal{B}_{\sqs\sv})}p
\\
&\quad+ \delta\mathcal{M}_{\svs\sv}(\bv{w})\bv{u}'
+ \delta\mathcal{N}_{\svs\sv}(\bv{w},\bv{w}',\bv{u})\bv{u}
+ \delta\mathcal{D}_{\svs\sv}(\bv{w})\bv{u}
+ \mathcal{S}_{\svs\sys}(\bv{w})\mathcal{A}_{\sys}(\bv{w})
\end{aligned}
&= \mathcal{F}_{\svs} + \mathcal{G}_{\svs}(\bv{w}),
\\
\label{eq: incompressibility Formal dual}
\mathcal{B}_{\sqs\sv}\bv{u} &= 0,
\\
\label{eq: velocity coupling dual}
\mathcal{M}_{\sys\sy}\bv{w}' - \mathcal{M}_{\sys\sv}(\bv{w})\bv{u} &= \bv{0}.
\end{align}
\end{problem}
\begin{remark}[Eulerian vs.\ Lagrangian elastic operators]
\label{rem:eulerian vs lagrangian elasticity}
Referring to Eq.~\eqref{eq: BLM Formal dual}, Theorem~\ref{th: eulerian vs lagrangian stiffness} shows that we could have formulated Problem~\ref{prob: IFIS} using the Eulerian elastic operator $\mathcal{A}_{\svs}(\bv{w},\bv{w})$ instead of the composition $\mathcal{S}_{\svs\sys}(\bv{w})\mathcal{A}_{\sys}(\bv{w})$.  This is because, in the infinite dimensional context of our abstract variational formulation, the operators $\mathcal{A}_{\svs}(\bv{w},\bv{w})$ and $\mathcal{S}_{\svs\sys}(\bv{w})\mathcal{A}_{\sys}(\bv{w})$ are equivalent.  However, as will be shown in Section~\ref{sec: Stability of the continuum problem}, the use of $\mathcal{S}_{\svs\sys}(\bv{w})\mathcal{A}_{\sys}(\bv{w})$ is justified by the fact that this operator lends itself more naturally to the derivation of stability estimates that rely solely on the \emph{weak form} of the velocity coupling in Eq.~\eqref{eq: velocity coupling dual}.  Moreover, anticipating a result discussed in Section~\ref{sec:semi discrete stability}, it turns out that (\emph{i}) the equivalence between $\mathcal{A}_{\svs}(\bv{w},\bv{w})$ and $\mathcal{S}_{\svs\sys}(\bv{w})\mathcal{A}_{\sys}(\bv{w})$ fails to hold for the discrete version of these operators, and (\emph{ii}) only the discrete version of $\mathcal{S}_{\svs\sys}(\bv{w})\mathcal{A}_{\sys}(\bv{w})$ can be shown to yield a satisfactory semi-discrete stability estimate.
\end{remark}

\subsection{Governing equations: compressible solid}
\label{subsec: Governing equations: compressible solid}
When the solid is compressible, incompressibility must be restricted to the physical (as opposed to the extended) fluid domain.  In addition, since the stress response in the solid is completely determined by the solid's stress constitutive response function, the field $p$ contributes to the balance of momentum equation only over the domain $\Omega\setminus B_{t}$.  Therefore, for the balance of linear momentum, we write
\begin{multline}
\label{eq: Bmomentum weak partitioned last compressible solid}
\int_{\Omega} \rho_{\f} (\dot{\bv{u}} - \bv{b}) \cdot \bv{v} \d{v}
- \int_{\Omega} p \ldiv \bv{v} \d{v}
+
\int_{\Omega} \hat{\tensor{T}}^{v}_{\f} \cdot \nabla_{\bv{x}}\bv{v} \d{v}
-\int_{\partial\Omega_{N}} \bv{\tau}_{g} \cdot \bv{v} \d{a}
\\
+ 
\int_{B} \bigl\{[\rho_{\s_{0}}(\bv{s}) - \rho_{\f_0}] [\dot{\bv{u}}(\bv{x},t) - \bv{b}(\bv{x},t)]\cdot \bv{v}(\bv{x})\bigr|_{\bv{x} = \map} \d{V}
\\
+ \int_{B} J(\bv{s},t) p(\bv{x},t) \ldiv \bv{v}(\bv{x})\bigr|_{\bv{x} = \map} \d{V}
\\
+
\int_{B} J(\bv{s},t) \bigl(\hat{\tensor{T}}^{v}_{\s} - \hat{\tensor{T}}^{v}_{\f}\bigr) \cdot \nabla_{\bv{x}}\bv{v}(\bv{x})\bigr|_{\bv{x} = \map} \d{V}
\\
+
\int_{B} \hat{\tensor{P}}^{e}_{\s} \, \trans{\tensor{F}}(\bv{s},t) \cdot \nabla_{\bv{x}}\bv{v}(\bv{x})\bigr|_{\bv{x} = \map} \d{V}
=
0
\quad \forall \bv{v} \in \mathscr{V}_{0}.
\end{multline}
Equation~\eqref{eq: Bmomentum weak partitioned last compressible solid}   is identical to Eq.~\eqref{eq: Bmomentum weak partitioned last} except for the term appearing as the third line of Eq.~\eqref{eq: Bmomentum weak partitioned last compressible solid}.  This term can be viewed as a correction to the second term on the first line that restricts the contribution of the field $p$ to $\Omega\setminus B_{t}$.

The restriction of the balance of mass equation to the domain $\Omega\setminus B_{t}$ can be written as follows:
\begin{equation}
\label{eq: balance of mass restricted}
\int_{\Omega} q \ldiv \bv{u} \d{v} - \int_{B_{t}} q \ldiv \bv{u} \d{v} = 0.
\end{equation}
To determine the motion of the solid domain, we adopt the same equation presented in the case of incompressible solids:
\begin{equation}
\label{eq: w u rel weak compressible solid}
\Phi_{B} \int_{B} \Bigl[\dot{\bv{w}}(\bv{s},t) - \bv{u}(\bv{x},t)\big|_{\bv{x} = \map}\Bigr] \cdot \bv{y}(\bv{s}) \d{V} = 0
\quad
\forall \bv{y} \in \mathscr{H}_{Y}.
\end{equation}

Equations~\eqref{eq: Bmomentum weak partitioned last compressible solid}--\eqref{eq: w u rel weak compressible solid} would allow us to determine a unique solution if the field $p$ were restricted to the domain $\Omega\setminus B_{t}$.  However, our numerical scheme still requires that $p$ be defined everywhere in $\Omega$.  To formulate a problem admitting a unique solution for the field $p \in \mathscr{Q}$, we must sufficiently constraint the behavior of $p$ over $B_{t}$.  The strategy to enforce such a constraint is not unique.  In some sense, $p$ can be restricted to $\Omega\setminus B_{t}$ by requiring that $p = 0$ over $B_{t}$.  Another, and perhaps more physically motivated, approach is to observe that, for a Newtonian fluid, $p$ represents the mean normal stress in the fluid.  Therefore, one may choose to constraint the field $p$ in a such a way that it represents the mean normal stress everywhere in $\Omega$.  Since the solid is compressible, its mean normal stress is completely determined by the solid's stress constitutive response functions.  Specifically, letting $\hat{p}_{\s}[\bv{u},\bv{w}]$ denote the constitutive response function for the mean normal stress in the solid, we have
\begin{equation}
\label{eq: Mean normal stress def}
\hat{p}_{\s}[\bv{u},\bv{w}] = -\frac{1}{\trace{\tensor{I}}}
\Bigr[ \hat{\tensor{T}}_{\s}^{v}[\bv{u}] \cdot \tensor{I} + J^{-1}[\bv{w}] \hat{\tensor{P}}_{\s}^{e}[\bv{w}] \cdot \tensor{F}[\bv{w}]\Bigl].
\end{equation}
Therefore, in addition to enforcing Eq.~\eqref{eq: balance of mass restricted}, we can enforce the requirement that $p - \hat{p}_{\s}[\bv{u},\bv{w}] = 0$ over $B_{t}$.  With this in mind, we replace Eq.~\eqref{eq: balance of mass restricted} with the following equation:
\begin{multline}
\label{eq: Bmass weak partitioned compressible solid}
-\int_{\Omega} q \ldiv \bv{u} \d{v} 
+ \int_{B} J(\bv{s},t) q(\bv{x}) \ldiv \bv{u}(\bv{x},t)\bigr|_{\bv{x} = \map} \d{V} 
\\
+ \int_{B} c_{1}
J(\bv{s},t) \bigr[ p(\bv{x},t) - c_{2} \hat{p}_{\s}[\bv{u},\bv{w}] \bigl]
q(\bv{x})\bigr|_{\bv{x} = \map} \d{V} = 0
\quad \forall q \in \mathscr{Q},
\end{multline}
where $c_{1} > 0$ is a constant parameter with dimensions of length times mass over time, and where $c_{2}$ is a dimensionless constant that can take on only the values $0$ or $1$.  For $c_{2} = 0$, the last term on the left-hand side of Eq.~\eqref{eq: Bmass weak partitioned compressible solid} is a (weak) requirement that $p = 0$ over $B_{t}$, whereas for $c_{2} = 1$, the field $p$ is (weakly) constrained to be equal to the mean normal stress in the solid domain and therefore everywhere in $\Omega$.

\begin{remark}[True incompressibility vs.\ near incompressibility]
When a compressible solid is immersed in an incompressible fluid, the classes of motions for the solid and the fluid, respectively, are not necessarily the same.  In this case, problems are typically formulated in such a way that the incompatibility is removed by assuming that both the fluid and the solid are compressible and then tuning the constitutive parameters of the fluid to approximate a nearly incompressible behavior (see, e.g., \citealp{BlancoFeijo_2008_A-Variational_0,WangZhang_2009_On-Computational_0}).  From elasticity it is known that nearly-incompressible models with the same incompressible limit behavior may behave differently from one another (see, e.g., \citealp{LevinsonBurgess_1971_Comparison_0}).  For this reasons, the present authors feel that it is important to offer a numerical approach to the solution of the problem of a compressible solid in an incompressible fluid as its own individual case.
\end{remark}

As was done in the incompressible case, we now reformulate our equations in terms of operators defined via duality.  Most of the operators defined in the incompressible case appear in the formulation of the compressible solid case.  Hence, we now define only those operators that did not appear in the previous case.  Specifically, we define the following three operators:
\begin{align}
\label{eq: Cpdiff def}
\begin{split}
&\delta\mathcal{B}_{\sqs\sv}(\bv{w}) : \mathscr{V} \to \mathscr{Q}^{*},~\forall \bv{w},\in \mathscr{Y},\forall \bv{u} \in \mathscr{V},\forall q \in \mathscr{Q}
\\
&\qquad
\prescript{}{\mathscr{Q}^{*}}{\bigl\langle}
\delta\mathcal{B}_{\sqs\sv}(\bv{w}) \bv{u}, q
\big\rangle_{\mathscr{Q}} := \int_{B} J[\bv{w}] q(\bv{x}) \ldiv\bv{u}(\bv{x})\bigr|_{\bv{x} = \map[w]} \d{V},
\end{split}
\\
\label{eq: Cpdiff transpose def}
\begin{split}
&\trans{\delta\mathcal{B}}_{\sqs\sv}(\bv{w}) : \mathscr{Q} \to \mathscr{V}^{*},~\forall \bv{w},\in \mathscr{Y}, \forall p \in \mathscr{Q}, \forall \bv{v} \in \mathscr{V}_{0}
\\
&\qquad
\prescript{}{\mathscr{V}^{*}}{\bigl\langle}
\trans{\delta\mathcal{B}}_{\sqs\sv}(\bv{w}) p, \bv{v}
\big\rangle_{\mathscr{V}} := \int_{B} J[\bv{w}] p(\bv{x}) \ldiv\bv{v}(\bv{x})\bigr|_{\bv{x} = \map[w]} \d{V},
\end{split}
\\
\label{eq: CPdet def}
\begin{split}
&\delta\mathcal{P}_{\sqs\sq}(\bv{w}) : \mathscr{Q} \to \mathscr{Q}^{*},~\forall p,q \in \mathscr{Q}, \forall \bv{w} \in \mathscr{Y}
\\
&\qquad
\prescript{}{\mathscr{Q}^{*}}{\bigl\langle}
\delta\mathcal{P}_{\sqs\sq}(\bv{w}) p, q
\big\rangle_{\mathscr{Q}} := \int_{B} J[\bv{w}] p(\bv{x}) q(\bv{x}) \big|_{\bv{x} = \map[w]} \d{V},
\end{split}
\\
\label{eq: CPextradet def}
\begin{split}
&\delta\mathcal{E}_{\sqs}(\bv{u},\bv{w},\bv{h}) \in \mathscr{Q}^{*},~\forall \bv{u} \in \mathscr{V}, \forall \bv{w},\bv{h} \in \mathscr{Y}
\\
&\qquad
\prescript{}{\mathscr{Q}^{*}}{\bigl\langle}
\delta\mathcal{E}_{\sqs}(\bv{u},\bv{w},\bv{h}), q
\big\rangle_{\mathscr{Q}} := -\int_{B}
\frac{1}{\trace{\tensor{I}}}
\Bigr[J[\bv{h}] \hat{\tensor{T}}_{\s}^{v}[\bv{u}] \cdot \tensor{I} + \hat{\tensor{P}}_{\s}^{e}[\bv{w}] \cdot \tensor{F}[\bv{h}]\Bigl]
q(\bv{x}) \big|_{\bv{x} = \map[h]} \d{V}.
\end{split}
\end{align}

These operators, along with those defined earlier, allow us to formally restate the overall problem described by Eqs.~\eqref{eq: Bmomentum weak partitioned last compressible solid}, \eqref{eq: Bmass weak partitioned compressible solid}, and~\eqref{eq: w u rel weak compressible solid} as follows:
\begin{problem}[Incompressible fluid, compressible solid: dual formulation]
\label{prob: IFCS}
Given constant coefficients $c_{1} > 0$ and $c_{2} = 0 \lor 1$, and given initial conditions $\bv{u}_{0} \in \mathscr{V}$ and $\bv{w}_{0} \in \mathscr{Y}$, for all $t \in (0,T)$ find $\bv{u}(\bv{x},t) \in \mathscr{V}$, $p(\bv{x},t) \in \mathscr{Q}$, and $\bv{w}(\bv{s},t) \in \mathscr{Y}$ such that
\begin{align}
\label{eq: BLM Formal dual compressible}
\begin{aligned}[b]
&\mathcal{M}_{\svs\sv}\bv{u}' +  \mathcal{N}_{\svs\sv}(\bv{u})\bv{u} + 
\mathcal{D}_{\svs\sv}\bv{u} + \bigl[\trans{\mathcal{B}}_{\sqs\sv} + \trans{\delta\mathcal{B}}_{\sqs\sv}(\bv{w})\bigr] p
\\
&\quad+ \delta\mathcal{M}_{\svs\sv}(\bv{w})\bv{u}'
+ \delta\mathcal{N}_{\svs\sv}(\bv{w},\bv{w}',\bv{u})\bv{u}
+ \delta\mathcal{D}_{\svs\sv}(\bv{w})\bv{u}
+ \mathcal{S}_{\svs\sys}(\bv{w})\mathcal{A}_{\sys}(\bv{w})
\end{aligned}
&= \mathcal{F}_{\svs} + \mathcal{G}_{\svs}(\bv{w}),
\\
\label{eq: compressibility Formal dual compressible}
\bigl[ \mathcal{B}_{\sqs\sv} + \delta\mathcal{B}_{\sqs\sv}(\bv{w}) \bigr]\bv{u} + c_{1} \bigl[\delta\mathcal{P}_{\sqs\sq}(\bv{w}) p - c_{2} \delta\mathcal{E}_{\sqs}(\bv{u},\bv{w},\bv{w}) \bigr] &= 0,
\\
\label{eq: velocity coupling dual compressible}
\mathcal{M}_{\sys\sy}\bv{w}' - \mathcal{M}_{\sys\sv}(\bv{w})\bv{u} &= \bv{0}.
\end{align}
\end{problem}

\begin{remark}[Complementarity of operators in $\mathscr{Q}^{*}$]
In Eq.~\eqref{eq: compressibility Formal dual compressible}, the supports of the terms $\bigl[\mathcal{B}_{\sqs\sv} + \delta\mathcal{B}_{\sqs\sv}(\bv{w})\bigr]$ and $\bigl[\delta\mathcal{P}_{\sqs\sq}(\bv{w}) + c_{2} \delta\mathcal{E}_{\sqs}(\bv{u},\bv{w},\bv{w})\bigr]$ are  $\Omega\setminus B_{t}$ and $B_{t}$, respectively.  That is, the supports of the terms in question are complementary subsets of $\Omega$. Consequently, the terms $\bigl[\mathcal{B}_{\sqs\sv} + \delta\mathcal{B}_{\sqs\sv}(\bv{w})\bigr]$ and $\bigl[\delta\mathcal{P}_{\sqs\sq}(\bv{w}) + c_{2} \delta\mathcal{E}_{\sqs}(\bv{u},\bv{w},\bv{w})\bigr]$ are equal to zero individually: 
\begin{equation} 
\label{eq: zero divergence in fluid compressible and condition on pressure in solid compressible}
\bigl[ \mathcal{B}_{\sqs\sv} + \delta\mathcal{B}_{\sqs\sv}(\bv{w}) \bigr]\bv{u} = 0
\quad \text{and} \quad
c_{1}
\bigl[\delta\mathcal{P}_{\sqs\sq}(\bv{w}) p - c_{2}
\delta\mathcal{E}_{\sqs}(\bv{u},\bv{w},\bv{w}) \bigr] = 0.
\end{equation}
This also implies that the constant $c_{1}$ in Eqs.~\eqref{eq: BLM Formal dual compressible} and the second of Eqs.~\eqref{eq: zero divergence in fluid compressible and condition on pressure in solid compressible} should not be interpreted as a penalization parameter but as a way to ensure that the equations are dimensionally correct.
\end{remark}

Problems~\ref{prob: IFIS} and~\ref{prob: IFCS} can be formally presented in terms of the Hilbert space $\mathscr{Z} := \mathscr{V}\times \mathscr{Q}\times \mathscr{Y}$, and $\mathscr{Z}_{0} := \mathscr{V}_{0} \times \mathscr{Q} \times \mathscr{H}_{Y}$ with inner product given by the sum of the inner products of the generating spaces. Defining $\mathscr{Z} \ni \xi := \trans{[\bv{u}, p, \bv{w}]}$ and $\mathscr{Z}_{0} \ni \psi := \trans{[\bv{v}, q, \bv{y}]}$, then Problems~\ref{prob: IFIS} and~\ref{prob: IFCS} can be compactly stated as
\begin{problem}[Grouped dual formulation]
\label{prob: IFG}
Given an initial condition $\xi_0 \in \mathscr{Z}$, for all $t \in (0,T)$ find $\xi(t) \in \mathscr{Z}$, such that
\begin{equation}
\label{eq:formal grouped dual}
\langle \mathcal{F}(t, \xi, \xi') , \psi \rangle =0, \quad \forall \psi \in \mathscr{Z}_0,
\end{equation}
where the full expression of $\mathcal{F} : \mathscr{Z} \mapsto \mathscr{Z}_0^*$ is defined as in Problem~\ref{prob: IFIS} or Problem~\ref{prob: IFCS}.
\end{problem}
\begin{remark}[Initial condition for the pressure]
  In Problem~\ref{prob: IFG}, an initial condition for the triple $\xi_0 = \trans{[\bv{u}_0, p_0, \bv{w}_0]}$ is required, just as a matter of compact representation of the problem. However, only the initial conditions  $\bv{u}_0$ and $\bv{w}_0$ are used, since we have no time derivative for the pressure which is a Lagrange multiplier for the incompressible part of the problem, or completely determined by the solution in the compressible part.  
\end{remark}

\subsection{Stability of the abstract variational formulation}
\label{sec: Stability of the continuum problem}
The definition of the operators $\mathcal{M}_{\svs\sv}$ and $\mathcal{N}_{\svs\sv}(\bv{u})$, along with the concept of material time derivative for Eulerian fields, yield the following result:
\begin{equation}
  \label{eq:missing term in seimidiscrete}
  \dualV{ \mathcal{M}_{\svs\sv} \bv{u}',\bv{u}} + \dualV{\mathcal{N}_{\svs\sv}(\bv{u}) \bv{u}, \bv{u}} = \int_{\Omega} \tfrac{1}{2} \rho_{\f} \, \dot{\overline{\bv{u}^{2}}} \d{v}.
\end{equation}
Keeping in mind that, for $\bv{x} = \bv{s} + \bv{w}(\bv{s},t)$, we have
\begin{equation}
\label{eq: MTD of u on B}
\dot{\bv{u}} = \frac{\partial}{\partial t} \bv{u}(\bv{s}+\bv{w}(\bv{s},t),t) = \frac{\partial\bv{u}(\bv{x},t)}{\partial t} \bigg|_{\bv{x} = \bv{s} + \bv{w}(\bv{s},t)} + \nabla_{\bv{x}}\bv{u}(\bv{x},t)\big|_{\bv{x}=\bv{s}+\bv{w}(\bv{s},t)} \frac{\partial \bv{w}(\bv{s},t)}{\partial t},
\end{equation}
and as a straightforward application of Eq.~\eqref{eq: General transport theorem classic control volume} in Lemma~\ref{lemma: transport th for control volumes and bodies} (with $\Omega$ replaced by $B$), we have that our definition of the operators $\delta \mathcal{M}_{\svs\sv}(\bv{w})$ and~$\delta\mathcal{N}_{\svs\sv}(\bv{w},\bv{w}',\bv{u})$ is such that
\begin{multline}
  \label{eq:approx delta k}
  \dualV{\delta \mathcal{M}_{\svs\sv}(\bv{w}) \bv{u},\bv{u}} + \dualV{\delta \mathcal{N}_{\svs\sv}(\bv{w}, \bv{w}',\bv{u}) \bv{u}, \bv{u}}
\\
= \frac{\nsd{}}{\nsd{t}} \int_{B} \tfrac{1}{2} \rho_{s_0}(\bv{s}) \bv{u}^2\big|_{\bv{x}=\map[\bv{w}]} \d{V} - \int_{B_{t}} \tfrac{1}{2} \rho_{\f} \, \dot{\overline{\bv{u}^{2}}} \, \d{v},
\end{multline}
where the above result is due to the fact that we selected $\bv{w}'$ instead of $\bv{u}$ in the nonlinear advection term of the acceleration of the solid.  Therefore, from Eqs.~\eqref{eq:missing term in seimidiscrete} and~\eqref{eq:approx delta k}, our formulation is such that
\begin{multline}
  \label{eq:approx kin e complete}
  \dualV{\mathcal{M}_{\svs\sv} \bv{u}',\bv{u}}+ \dualV{ \mathcal{N}_{\svs\sv}(\bv{u}) \bv{u},\bv{u}}
\\
+ \dualV{\delta \mathcal{M}_{\svs\sv}(\bv{w}) \bv{u},\bv{u}} +  \dualV{\delta \mathcal{N}_{\svs\sv}(\bv{w}, \bv{w}',\bv{u}) \bv{u}, \bv{u}}
\\
= 
\int_{\Omega\setminus B_{t}} \tfrac{1}{2} \rho_{\f} \, \dot{\overline{\bv{u}^{2}}} \d{v}  + \frac{\nsd{}}{\nsd{t}} \int_{B} \tfrac{1}{2} \rho_{s_0}(\bv{s}) \bv{u}^2\big|_{\bv{x}=\map[\bv{w}]} \d{V}.
\end{multline}
Invoking Eq.~\eqref{eq: TTMB ID} in Lemma~\eqref{lemma: transport with mass densities}, we obtain
\begin{equation}
\label{Eq: Navier-Stokes wish}
\int_{\Omega\setminus B_{t}} \tfrac{1}{2} \rho_{\f} \, \dot{\overline{\bv{u}^{2}}} \d{v}
=
\frac{\nsd{}}{\nsd{t}} \int_{\Omega\setminus B_{t}} \tfrac{1}{2} \rho_{\f} \, \bv{u}^{2} \d{v} + \int_{\partial\Omega_N} \tfrac{1}{2} \rho_{\f} \, \bv{u}^{2} \,  \bv{u} \cdot \bv{m} \d{a}.
\end{equation}
Equations~\eqref{eq:missing term in seimidiscrete} and~\eqref{eq:approx delta k} taken together can be written as follows:
\begin{multline}
\label{eq: kin energy estimate}
\dualV{ \mathcal{M}_{\svs\sv} \bv{u}',\bv{u}} + \dualV{\mathcal{N}_{\svs\sv}(\bv{u}) \bv{u}, \bv{u}} 
\\
+ \dualV{\delta \mathcal{M}_{\svs\sv}(\bv{w}) \bv{u},\bv{u}} +  \dualV{\delta \mathcal{N}_{\svs\sv}(\bv{w}, \bv{w}',\bv{u}) \bv{u}, \bv{u}}
=
\frac{\nsd{}}{\nsd{t}} \int_{\Omega} \kappa \d{v} + \int_{\partial\Omega_N}  \kappa \bv{u} \cdot \bv{m} \d{a},
\end{multline}
where $\kappa = \tfrac{1}{2} \rho \bv{u}^{2}$.  The remaining terms in the stability estimates are the viscous dissipative terms
\begin{equation}
  \label{eq:viscous dissipative term}
  \dualV{ \mathcal{D}_{\svs\sv} \bv{u},\bv{u}} = \int_\Omega \hat{\tensor{T}}^{v}_{\f}[ \bv{u}]\cdot \nabla \bv{u} \d{v}, 
\end{equation}
and
\begin{equation}
  \label{eq:delta viscous dissipative term}
  \dualV{\delta \mathcal{D}_{\svs\sv}(\bv{w}) \bv{u},\bv{u}} = \int_{B} \bigl\{ J[\bv{w}]\bigl( \tensor{T}^{v}_{\s}[ \bv{u}] - \tensor{T}^v_f[ \bv{u}] \bigr) \cdot \nabla \bv{u} \bigr\}_{\bv{x}=\map[\bv{w}]}\d{v}, 
\end{equation}
where the combination of the two yields the dissipative term
\begin{equation}
  \label{eq:total viscous dissipative}
 \dualV{\mathcal{D}_{\svs\sv}\bv{u},\bv{u}} + \dualV{\delta \mathcal{D}_{\svs\sv}(\bv{w}) \bv{u},\bv{u}} = \int_\Omega \hat{\tensor{T}}^{v}[ \bv{u}]\cdot \nabla \bv{u} \d{v}.
\end{equation}

The final term needed for the derivation of the full energy estimate pertains to the time rate of change of the elastic energy.  In the proposed immersed method, the coupling between the Eulerian and Lagrangian frameworks is embodied by a variety of operators.  These operators take different forms depending on whether the velocity coupling between $\bv{u}$ and $\bv{w}'$ is used in its strong form, as in Eq.~\eqref{eq: w u rel}, or in its weak form, as in Eq.~\eqref{eq: velocity coupling dual} or~\eqref{eq: velocity coupling dual compressible}.  If we could use directly the strong form of the velocity coupling, namely Eq.~\eqref{eq: w u rel}, we would have that $\bv{u}(\bv{s}+\bv{w}(\bv{s},t),t) = \bv{w}'(\bv{s},t)$, and since the chain rule gives
\begin{equation}
  \label{eq:F grad u = Fdot}
  \tensor{F}[\bv{w}'] = \nabla_{\bv{s}} \bv{w}' = \Bigl( \nabla_{\bv{x}} \bv{u}(\bv{x})\big|_{\bv{x}=\map[w]} \Bigr) \tensor{F}[\bv{w}],
\end{equation}
we would then obtain the usual elastic energy estimates from the definition of the operator $\mathcal{A}_\alpha$:
\begin{equation}
  \label{eq:elastic energy estimate}
  \prescript{}{\mathscr{V}^{*}}{\bigl\langle}
  \mathcal{A}_{\svs}(\bv{w},\bv{w}), \bv{u}
  \big\rangle_{\mathscr{V}} = \frac{\d{}}{\d{t}} \int_B W^{e}_{\s}(\tensor{F}[\bv{w}]) \d{V}. 
\end{equation}
However, for solutions $\bv{u}$ and $\bv{w}'$ of Problem~\ref{prob: IFIS} or~\ref{prob: IFCS}, Eq.~(\ref{eq: w u rel}) holds only in $\mathscr{H}^*_Y$, that is, in its weak form and Eq.~\eqref{eq:elastic energy estimate} can no longer be obtained as just illustrated.  We must therefore proceed in a different way.  Our starting point is the standard estimate in the $\mathscr{H}_{Y}$ space for the (fully Lagrangian form of the) stiffness operator $\mathcal{A}_{\sys}(\bv{w})$: 
\begin{equation}
  \label{eq:lagrangian elastic energy estimate}
  \dualY{\mathcal{A}_{\sys}(\bv{w}), \bv{w}'} = \frac{\d{}}{\d{t}} \int_B W_{\s}^{e}(\tensor{F}[\bv{w}]) \d{V},
\end{equation}
which is valid for $\bv{w}$ in $\mathscr{Y}$ and $\bv{w}'$ in $\mathscr{H}_{Y}$. Using Eq.~(\ref{eq:lagrangian elastic energy estimate}) and Theorem~\ref{th: eulerian vs lagrangian stiffness}, we can prove the following Lemma:
\begin{lemma}[Energy estimate for the immersed elastic operator]
  \label{th: IEO abstract variational formulation}
  Given an elasticity operator $\mathcal{A}_{\sys}(\bv{w})$, then the operator 
$\mathcal{S}_{\svs\sys}(\bv{h})\mathcal{A}_{\sys}(\bv{w})$ 
  satisfies the following energy estimate whenever Eq.~\eqref{eq: velocity coupling dual} or Eq.~\eqref{eq: velocity coupling dual compressible} are satisfied:
  \begin{equation}
    \label{eq:coupled energy estimate}
    \dualV{\mathcal{S}_{\svs\sys}(\bv{w})\mathcal{A}_{\sys}(\bv{w}),\bv{u}}  = \frac{\d{}}{\d{t}} \int_B W^{e}_{\s}(\tensor{F}[\bv{w}]) \d{V}.
  \end{equation}
\end{lemma}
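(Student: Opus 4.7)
The strategy is to reduce the Eulerian-looking composition $\mathcal{S}_{\svs\sys}(\bv{w})\mathcal{A}_{\sys}(\bv{w})$ to a purely Lagrangian duality pairing for which the standard energy identity~\eqref{eq:lagrangian elastic energy estimate} is directly applicable. The key bridge is the second identity of Theorem~\ref{th: eulerian vs lagrangian stiffness}, namely $\mathcal{S}_{\svs\sys}(\bv{w}) = \trans{\mathcal{M}}_{\sys\sv}(\bv{w})\mathcal{M}_{\sys\sy}^{-1}$, which turns the change-of-variable operator $\mathcal{S}_{\svs\sys}$ into a combination of the coupling mass operator $\trans{\mathcal{M}}_{\sys\sv}(\bv{w})$ and the Riesz identity $\mathcal{M}_{\sys\sy}$. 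Once in that form, the weak velocity coupling~\eqref{eq: velocity coupling dual} (or~\eqref{eq: velocity coupling dual compressible}) slots in naturally as an equality in $\mathscr{H}_{Y}^{*}$, allowing us to trade the immersed velocity $\bv{u}\big|_{\bv{x}=\map[w]}$ for the Lagrangian velocity $\bv{w}'$.

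Concretely, I would first introduce the auxiliary element $\bv{z} := \mathcal{M}_{\sys\sy}^{-1} \mathcal{A}_{\sys}(\bv{w}) \in \mathscr{H}_{Y}$, which is well defined because $\mathcal{M}_{\sys\sy}$ is the Riesz isomorphism between $\mathscr{H}_{Y}$ and $\mathscr{H}_{Y}^{*}$ (as used at the end of the proof of Theorem~\ref{th: eulerian vs lagrangian stiffness}). Then I would unfold
\begin{equation*}
\dualV{\mathcal{S}_{\svs\sys}(\bv{w})\mathcal{A}_{\sys}(\bv{w}),\bv{u}}
= \dualV{\trans{\mathcal{M}}_{\sys\sv}(\bv{w})\bv{z},\bv{u}}
= \dualY{\mathcal{M}_{\sys\sv}(\bv{w})\bv{u}, \bv{z}},
\end{equation*}
using the definitions~\eqref{eq: MGamma def} and~\eqref{eq: Mgamma1T def}, each of which writes both quantities as the same volume integral $\Phi_{B}\int_{B}\bv{u}(\bv{x})|_{\bv{x}=\map[w]}\cdot \bv{z}(\bv{s})\d{V}$.

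The crucial step is now to invoke the weak velocity coupling. Since $\mathcal{M}_{\sys\sv}(\bv{w})\bv{u} = \mathcal{M}_{\sys\sy}\bv{w}'$ holds as an identity in $\mathscr{H}_{Y}^{*}$, the pairing against the admissible test $\bv{z}\in\mathscr{H}_{Y}$ gives $\dualY{\mathcal{M}_{\sys\sv}(\bv{w})\bv{u},\bv{z}} = \dualY{\mathcal{M}_{\sys\sy}\bv{w}',\bv{z}}$. The symmetry of $\mathcal{M}_{\sys\sy}$ (it corresponds to the symmetric $L^{2}(B)^{d}$ pairing scaled by $\Phi_{B}$) then yields $\dualY{\mathcal{M}_{\sys\sy}\bv{w}',\bv{z}} = \dualY{\mathcal{M}_{\sys\sy}\bv{z}, \bv{w}'} = \dualY{\mathcal{A}_{\sys}(\bv{w}),\bv{w}'}$, the last equality being the defining property of $\bv{z}$. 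Applying~\eqref{eq:lagrangian elastic energy estimate} closes the chain and delivers~\eqref{eq:coupled energy estimate}.

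The main obstacle, which is really a well-posedness subtlety rather than a computational one, is to justify that the auxiliary element $\bv{z}$ is legitimate: one needs $\mathcal{A}_{\sys}(\bv{w}) \in \mathscr{H}_{Y}^{*}$ so that $\mathcal{M}_{\sys\sy}^{-1}\mathcal{A}_{\sys}(\bv{w})\in\mathscr{H}_{Y}$, and one needs that $\bv{u}|_{\map[w]}$ indeed defines an element of $\mathscr{H}_{Y}$ so that the pairing $\dualY{\cdot,\bv{u}|_{\map[w]}}$ makes sense. Both of these facts are exactly what the analysis around~\eqref{eq: estimate of w' norm in H^1} and the constitutive assumptions on $\hat{W}^{e}_{\s}$ are designed to provide (in particular $J\ge J_{m}>0$ keeps $\tensor{F}[\bv{w}]$ invertible and the change of variables valid). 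Once these admissibility checks are dispatched, the remaining manipulations are pure applications of the definitions~\eqref{eq: MB def}, \eqref{eq: MGamma def}, \eqref{eq: Mgamma1T def}, \eqref{eq: S def}, and~\eqref{eq: Agamma def}.
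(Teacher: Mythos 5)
Your proposal is correct and follows essentially the same route as the paper's proof: both rest on the identity $\mathcal{S}_{\svs\sys}(\bv{w}) = \trans{\mathcal{M}}_{\sys\sv}(\bv{w})\mathcal{M}_{\sys\sy}^{-1}$ from Theorem~\ref{th: eulerian vs lagrangian stiffness}, the weak velocity coupling, the symmetry and invertibility of the Riesz operator $\mathcal{M}_{\sys\sy}$, and the Lagrangian estimate~\eqref{eq:lagrangian elastic energy estimate}. The only difference is cosmetic: you unfold the left-hand side of~\eqref{eq:coupled energy estimate} and work toward~\eqref{eq:lagrangian elastic energy estimate}, whereas the paper substitutes $\bv{w}' = \mathcal{M}_{\sys\sy}^{-1}\mathcal{M}_{\sys\sv}(\bv{w})\bv{u}$ into that estimate and manipulates the duality in the opposite direction.
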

\begin{proof}
Using Eq.~\eqref{eq: velocity coupling dual} or Eq.~\eqref{eq: velocity coupling dual compressible} along with the invertibility of $\mathcal{M}_{\sys\sy}$, the Riesz identity on $\mathscr{H}_{Y}$, we can write
  \begin{equation}
    \label{eq:w' rel u dual}
    \bv{w}' = \mathcal{M}_{\sys\sy}^{-1}\mathcal{M}_{\sys\sv}(\bv{w}) \bv{u}.
  \end{equation}
Substituting Eq.~\eqref{eq:w' rel u dual} into Eq.~\eqref{eq:lagrangian elastic energy estimate}, we have
  \begin{equation}
    \label{eq:proof dual elastic energy estimate starting point}
    \dualY{\mathcal{A}_{\sys}(\bv{w}), \mathcal{M}_{\sys\sy}^{-1}\mathcal{M}_{\sys\sv}(\bv{w}) \bv{u}} =  
\frac{\d{}}{\d{t}} \int_B W^{e}_{\s}(\tensor{F}[\bv{w}]) \d{V}.
  \end{equation}
Focusing on the left-hand side of Eq.~\eqref{eq:proof dual elastic energy estimate starting point}, we can write\footnote{%
Referring to Eq.~\eqref{eq: MB def}, $\mathcal{M}_{\sys\sy}$ is such that $\prescript{}{\mathscr{H}_{Y}^{*}}{\bigl\langle}
\mathcal{M}_{\sys\sy}\bv{w}, \bv{y}
\big\rangle_{\mathscr{H}_Y} = \prescript{}{\mathscr{H}_{Y}^{*}}{\bigl\langle}
\mathcal{M}_{\sys\sy}\bv{y}, \bv{w}
\big\rangle_{\mathscr{H}_Y}$ for all $\bv{w},\bv{y} \in \mathscr{H}_{Y}$.  As a consequence, $\mathcal{M}_{\sys\sy}^{-1}$ is such that $\prescript{}{\mathscr{H}_{Y}^{*}}{\bigl\langle} \bv{y}^{*},
\mathcal{M}_{\sys\sy}^{-1}\bv{w}^{*}
\big\rangle_{\mathscr{H}_Y} = \prescript{}{\mathscr{H}_{Y}^{*}}{\bigl\langle} \bv{w}^{*}, \mathcal{M}_{\sys\sy}^{-1}\bv{y}^{*}, 
\big\rangle_{\mathscr{H}_Y}$, for all $\bv{w}^{*},\bv{y}^{*} \in \mathscr{H}_{Y}^{*}$.}
  \begin{multline}
    \label{eq:proof dual elastic energy estimate}
    \dualY{\mathcal{A}_{\sys}(\bv{w}), \mathcal{M}_{\sys\sy}^{-1}\mathcal{M}_{\sys\sv}(\bv{w}) \bv{u}} =  
    \dualY{\mathcal{M}_{\sys\sv}(\bv{w}) \bv{u}, \mathcal{M}_{\sys\sy}^{-1} \mathcal{A}_{\sys}(\bv{w})}
    \\
     = \dualV{\trans{\mathcal{M}}_{\sys\sv}(\bv{w}) \mathcal{M}_{\sys\sy}^{-1} \mathcal{A}_{\sys}(\bv{w}),\bv{u}},
  \end{multline}
where we have applied the definitions in Eqs.~\eqref{eq: MGamma def} and~\eqref{eq: Mgamma1T def} to obtain the last of the above expressions.  Using the result in Eq.~\eqref{eq:proof dual elastic energy estimate}, Eq.~\eqref{eq:proof dual elastic energy estimate starting point} can be rewritten as
  \begin{equation}
    \label{eq:proof dual elastic energy estimate arrival point}
    \dualV{\trans{\mathcal{M}}_{\sys\sv}(\bv{w}) \mathcal{M}_{\sys\sy}^{-1} \mathcal{A}_{\sys}(\bv{w}),\bv{u}} =  
\frac{\d{}}{\d{t}} \int_B W^{e}_{\s}(\tensor{F}[\bv{w}]) \d{V},
  \end{equation}
and Eq.~\eqref{eq:coupled energy estimate} follows from the application of Theorem~\ref{th: eulerian vs lagrangian stiffness}.
 \end{proof}

Combining the results in Eqs.~\eqref{eq: kin energy estimate}, \eqref{eq:total viscous dissipative}, and \eqref{eq:coupled energy estimate} allows us to state the following theorem:
\begin{theorem}
[Energy estimate for the abstract variational formulation]
\label{th: Continuous energy estimate}
  Let $\bv{u}$, $p$ and $\bv{w}$, be the solutions of either Problem~\ref{prob: IFIS} or Problem~\ref{prob: IFCS}. Then the following energy estimate is satisfied
\begin{multline}
\label{eq: TPE rel continuous}
  \int_{\Omega} \rho \bv{b} \cdot \bv{u} \d{v} +
  \int_{\partial\Omega_{N}} \bv{\tau}_{g} \cdot \bv{u} \d{a}
  = 
  \frac{\nsd{}}{\nsd{t}} \int_{\Omega} \kappa \d{v} + \int_{\partial\Omega_N}  \kappa \bv{u} \cdot \bv{m} \d{a}
  \\
  + \int_{\Omega} \hat{\tensor{T}}^{v}[\bv{u}] \cdot \nabla\bv{u} \d{v} + \frac{\nsd{}}{\nsd{t}} \int_B W^{e}_{\s}(\tensor{F}[\bv{w}]) \d{V},
\end{multline}
where $\kappa = \tfrac{1}{2} \rho \bv{u}^{2}$.
\end{theorem}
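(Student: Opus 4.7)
The plan is to derive the estimate by testing the momentum equation in Problem~\ref{prob: IFIS} or Problem~\ref{prob: IFCS} with $\bv{v}=\bv{u}$ (admissible under the standing assumption that $\bv{u}_{g}=\bv{0}$ on $\partial\Omega_{D}$, so that $\bv{u}\in\mathscr{V}_{0}$), and then to collect the resulting duality pairings by invoking the three block identities already established in this section. First I would group the inertial terms $\mathcal{M}_{\svs\sv}\bv{u}' + \mathcal{N}_{\svs\sv}(\bv{u})\bv{u} + \delta\mathcal{M}_{\svs\sv}(\bv{w})\bv{u}' + \delta\mathcal{N}_{\svs\sv}(\bv{w},\bv{w}',\bv{u})\bv{u}$ and apply Eq.~\eqref{eq: kin energy estimate} to rewrite their duality with $\bv{u}$ as $\frac{\nsd{}}{\nsd{t}}\int_{\Omega}\kappa\d{v}+\int_{\partial\Omega_{N}}\kappa\,\bv{u}\cdot\bv{m}\d{a}$. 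Next, the two viscous operators $\mathcal{D}_{\svs\sv}\bv{u}$ and $\delta\mathcal{D}_{\svs\sv}(\bv{w})\bv{u}$ combine, via Eq.~\eqref{eq:total viscous dissipative}, into the single dissipation integral $\int_{\Omega}\hat{\tensor{T}}^{v}[\bv{u}]\cdot\nabla\bv{u}\d{v}$.

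The elastic term is where the weak velocity coupling is essential: the paired expression $\dualV{\mathcal{S}_{\svs\sys}(\bv{w})\mathcal{A}_{\sys}(\bv{w}),\bv{u}}$ is handled directly by Lemma~\ref{th: IEO abstract variational formulation}, which converts it into $\frac{\d{}}{\d{t}}\int_{B}W_{\s}^{e}(\tensor{F}[\bv{w}])\d{V}$. For the forcing, pairing $\mathcal{F}_{\svs}+\mathcal{G}_{\svs}(\bv{w})$ with $\bv{u}$ produces $\int_{\Omega}\rho_{\f}\bv{b}\cdot\bv{u}\d{v}+\int_{\partial\Omega_{N}}\bv{\tau}_{g}\cdot\bv{u}\d{a}+\int_{B}\bigl(\rho_{\s_{0}}(\bv{s})-\rho_{\f}J[\bv{w}]\bigr)\bv{b}\cdot\bv{u}(\bv{x})\bigr|_{\bv{x}=\map[w]}\d{V}$. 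Transforming the last integral back to $B_{t}$ via $\d{v}=J\d{V}$ and using the Lagrangian mass balance $\rho_{\s_{0}}=\rho_{\s}J$ (Eq.~\eqref{eq: BM solid Lagrangian}), it becomes $\int_{B_{t}}(\rho_{\s}-\rho_{\f})\bv{b}\cdot\bv{u}\d{v}$, which together with the $\rho_{\f}$ term and the piecewise definition of $\rho$ in Eq.~\eqref{eq:definition of overall rho} collapses exactly to $\int_{\Omega}\rho\bv{b}\cdot\bv{u}\d{v}$.

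It remains to dispose of the pressure contributions. In the incompressible case, $\dualV{\trans{\mathcal{B}}_{\sqs\sv}p,\bv{u}}=\prescript{}{\mathscr{Q}^{*}}{\bigl\langle}\mathcal{B}_{\sqs\sv}\bv{u},p\big\rangle_{\mathscr{Q}}$ vanishes identically because Eq.~\eqref{eq: incompressibility Formal dual} enforces $\mathcal{B}_{\sqs\sv}\bv{u}=0$ in $\mathscr{Q}^{*}$. In the compressible case the analogous pairing is $\prescript{}{\mathscr{Q}^{*}}{\bigl\langle}\bigl[\mathcal{B}_{\sqs\sv}+\delta\mathcal{B}_{\sqs\sv}(\bv{w})\bigr]\bv{u},p\big\rangle_{\mathscr{Q}}$, and this vanishes by the first of Eq.~\eqref{eq: zero divergence in fluid compressible and condition on pressure in solid compressible}, i.e.\ by the complementarity of supports observed in the remark following Problem~\ref{prob: IFCS}. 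The additional operators $\delta\mathcal{P}_{\sqs\sq}(\bv{w})p$ and $\delta\mathcal{E}_{\sqs}(\bv{u},\bv{w},\bv{w})$ appear only in the mass equation and never in the momentum equation, so they contribute nothing to the energy balance. Reassembling the surviving terms yields Eq.~\eqref{eq: TPE rel continuous}.

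The main obstacle is not analytical but bookkeeping: one must verify that the extra pressure structure of the compressible formulation, and in particular the presence of $\delta\mathcal{B}_{\sqs\sv}(\bv{w})$, $\delta\mathcal{P}_{\sqs\sq}(\bv{w})$, and $\delta\mathcal{E}_{\sqs}$, does not perturb the energy identity obtained in the incompressible case. The complementarity-of-supports observation is the key technical ingredient: it is precisely because $\bigl[\mathcal{B}_{\sqs\sv}+\delta\mathcal{B}_{\sqs\sv}(\bv{w})\bigr]\bv{u}$ already vanishes on its own in $\mathscr{Q}^{*}$ that one does not need to test the mass equation with any clever combination of functions, and the proof collapses to a clean assembly of the three pre-existing identities.
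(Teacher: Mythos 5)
Your proof is correct and follows essentially the same route as the paper: the theorem is obtained exactly by pairing the momentum equation with $\bv{u}$ and assembling Eq.~\eqref{eq: kin energy estimate} for the inertial block, Eq.~\eqref{eq:total viscous dissipative} for the viscous block, and Lemma~\ref{th: IEO abstract variational formulation} for the elastic term, with the pressure contributions eliminated just as you do --- via Eq.~\eqref{eq: incompressibility Formal dual} in the incompressible case and the complementarity identities~\eqref{eq: zero divergence in fluid compressible and condition on pressure in solid compressible} in the compressible case, which is precisely the content of the paper's remark on the compressible case. Your explicit reassembly of the forcing terms $\mathcal{F}_{\svs}+\mathcal{G}_{\svs}(\bv{w})$ into $\int_{\Omega}\rho\,\bv{b}\cdot\bv{u}\d{v}$ using Eq.~\eqref{eq: BM solid Lagrangian} and Eq.~\eqref{eq:definition of overall rho} is a detail the paper leaves implicit, and you carry it out correctly.
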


\begin{remark}[Validity of Theorem~\ref{th: Continuous energy estimate} in the compressible case]
  When computing the duality between the balance of linear momentum in Eq.~(\ref{eq: BLM Formal dual compressible}) and the exact solution $\bv{u}$, one can use explicitly the properties expressed in Eqs.~\eqref{eq: zero divergence in fluid compressible and condition on pressure in solid compressible} to remove the terms involving the pressure from the estimate, obtaining formally the same result which is obtained in the incompressible case. 
\end{remark}

When the external forces and the boundary conditions are identically zero, we obtain the following result:
\begin{gather}
\label{eq:energy estimates cont prob}
\frac{\nsd{}}{\nsd{t}}\int_{\Omega} \kappa \d{V} + 
\int_{\partial\Omega_{N}} \kappa \bv{u} \cdot \bv{m} \, \d{a}
+ \frac{\nsd{}}{\nsd{t}} \int_{B} W_{\s}^{e} \d{V}
+ \int_{\Omega} \hat{\tensor{T}}^{v} \cdot \nabla_{\bv{x}}\bv{u}\d{V}
= 0.
\shortintertext{and}
\label{eq:energy estimates cont inequality}
\frac{\nsd{}}{\nsd{t}}\int_{\Omega} \kappa \d{V} + 
\int_{\partial\Omega_{N}} \kappa \bv{u} \cdot \bv{m} \, \d{a}
+ \frac{\nsd{}}{\nsd{t}} \int_{B} W_{\s}^{e} \d{V}
\leq 0.
\end{gather}
We observe that the presence of the kinetic energy flux ($\kappa \, \bv{u} \cdot \bv{m}$) is due to the fact that we have posed our problem over a control volume with mixed boundary conditions.  With this in mind, Eq.~\eqref{eq:energy estimates cont prob} states a classical result: the instantaneous variation of the total energy of the system, namely the sum of the kinetic energy, the potential energy, and the kinetic energy flux, is equal to the negative of the internal dissipation.  Furthermore, inequality~\eqref{eq:energy estimates cont inequality} implies that for any physically admissible set of constitutive equations for the solid and the fluid, the abstract variational formulation we propose is asymptotically  stable whenever $\partial\Omega_{N} = \emptyset$.

\section{Discrete Formulation}
\label{sec: Discretization}
\subsection{Spatial Discretization by finite elements}
\label{section: discretization by FEM}
To approximate the continuous problem, we introduce the decompositions $\Omega_{h}$ for $\Omega$ and $B_{h}$ for $B$ into (closed) cells $K$ (triangles or quadrilaterals in 2D, and tetrahedra or hexahedra in 3D) such that the usual regularity assumptions are satisfied:
\begin{enumerate}
\item
$\overline{\Omega} = \cup \{ K \in \Omega_{h} \}$, and $\overline{B} = \cup \{ K \in B_{h} \}$;

\item
Any two cells $K,K'$ only intersect in common faces, edges, or vertices;

\item
The decomposition $\Omega_{h}$ matches the decomposition $\partial \Omega = \partial\Omega_{D} \cup \partial\Omega_{N}$.
\end{enumerate}
On the decompositions $\Omega_{h}$ and $B_{h}$, we consider the finite dimensional subspaces $\mathscr{V}_{h} \subset \mathscr{V}$, $\mathscr{Q}_{h} \subset \mathscr{Q}$, and $\mathscr{Y}_{h} \subset \mathscr{Y}$ defined as
\begin{alignat}{5}
\label{eq: functional space u h}
\mathscr{V}_h &:= \Bigl\{ \bv{u}_h \in \mathscr{V} \,&&\big|\, \bv{u}_{h|K}  &&\in \mathcal{P}_V(K), \, K &&\in \Omega_h \Bigr\} &&\equiv \vssp\{ \bv{v}_{h}^{i} \}_{i=1}^{N_{V}}
\\
\label{eq: functional space p h}
\mathscr{Q}_h &:= \Bigl\{ p_h \in \mathscr{Q} \,&&\big|\, p_{h|K}  &&\in \mathcal{P}_Q(K), \, K &&\in \Omega_h \Bigr\} &&\equiv \vssp\{ q_{h}^{i} \}_{i=1}^{N_{Q}}\\
\label{eq: functional space w h}
\mathscr{Y}_{h} &:= \Bigl\{ \bv{w}_h \in \mathscr{Y} \,&&\big|\, \bv{w}_{h|K} &&\in \mathcal{P}_Y(K), \, K &&\in B_h \Bigr\} &&\equiv \vssp\{ \bv{y}_{h}^{i} \}_{i=1}^{N_{Y}},
\end{alignat}
where $\mathcal{P}_{V}(K)$, $\mathcal{P}_{Q}(K)$ and $\mathcal{P}_{Y}(K)$ are polynomial spaces of degree $r_{V}$, $r_{Q}$ and $r_{Y}$ respectively on the cells $K$, and $N_V$, $N_Q$ and $N_Y$ are the dimensions of each finite dimensional space.

Our choice of finite dimensional spaces $\mathscr{V}_h$ and $\mathscr{Y}_h$ are included in the pivot spaces $\mathscr{H}_V$ and $\mathscr{H}_Y$, respectively, which allow us to use only one discrete space for both $\bv{u}$ and $\bv{u}'$ and one for $\bv{w}$ and $\bv{w}'$.

In the examples, we chose the pair $\mathscr{V}_{h}$ and $\mathscr{Q}_{h}$ so as to satisfy the inf-sup condition for existence, uniqueness, and stability of the approximate solution pertaining to the Navier-Stokes component of the problem (see, e.g., \citealp{BrezziFortin-1991-a}).  Note that the definitions in Eqs.~\eqref{eq: functional space u h}--\eqref{eq: functional space w h} imply that the functions in $\mathscr{V}_{h}$ and $\mathscr{Y}_{h}$ are continuous over $\Omega_{h}$ and $B_{h}$, respectively.

To state the discrete versions of Problems~\ref{prob: IFIS} and~\ref{prob: IFCS}, we first introduce some additional notation.  Given a discrete functional space, say, $\mathscr{V}_{h}$, one of its elements $\bv{u}_{h}$ is identified by the column vector of time dependent coefficients $u_{h}^{j}(t)$, $j = 1,\ldots,N_V$, such that $\bv{u}_{h}(\bv{x},t) = \sum u_{h}^{j}(t) \bv{v}_{h}^{j}(\bv{x})$, where $\bv{v}_{h}^{j}$ is the $\nth{j}$ base element of $\mathscr{V}_{h}$.  With a slight abuse of notation, we will write  $M_{\svs\sv} \bv{u}_h$ to mean the multiplication of the column vector $\bv{u}_h$ by the matrix whose elements $M_{\svs\sv}^{ij}$ are given by
\begin{equation}
  \label{eq:eq:notation matrix element}
  M_{\svs\sv}^{ij} := 
  \prescript{}{\mathscr{V}^{*}}{\bigl\langle}
  \mathcal{M}_{\svs\sv}\bv{v}_h^j,\bv{v}_h^i
  \big\rangle_{\mathscr{V}},
\end{equation}
where the operator in angle brackets is the one defined earlier. The same is intended for all other previously defined operators.

\begin{remark}[Dimension of matrices]
The subscript convention that we adopted for the continuous operators allows one to determine the dimensions of the matrices and of the column vectors involved. For example, the discrete operator $M_{\svs\sv}$ is a matrix with dimensions $N_V\times N_V$, while the matrix $M_{\sys\sv}(\bv{w}_h)$ has dimensions $N_Y\times N_V$.
\end{remark}

\begin{remark}[Discrete duality product]
With the above notation and due to the linearity of the integral operator, we can express duality products in the discrete spaces by simple scalar products in $\mathbb{R}^N$, where $N$ depends on the dimension of the system at hand. For example, given the matrix $M_{\svs\sv}$, then
  \begin{equation}
    \label{eq:duality product in discrete spaces}
    \prescript{}{\mathscr{V}^{*}}{\bigl\langle}
    \mathcal{M}_{\svs\sv}\bv{u}_h,\bv{v}_h
    \big\rangle_{\mathscr{V}} = \bv{v}_h \cdot M_{\svs\sv} \bv{u}_h,
  \end{equation}
where the dot-product on the right hand side is the scalar product in $\mathbb{R}^{N_V}$.
\end{remark}

For a given choice of $\Omega_{h}$ and $B_{h}$, along with corresponding choices of the finite dimensional spaces $\mathscr{V}_{h}$, $\mathscr{Q}_{h}$, and $\mathscr{Y}_{h}$, we reformulate Problem~\ref{prob: IFIS} as follows:
\begin{problem}
\label{prob: IFIS - discrete}
Given $\bv{u}_{0} \in \mathscr{V}_{h}$, $\bv{w}_{0} \in \mathscr{Y}_{h}$, for all $t \in (0,T)$, find $\bv{u}_{h}(t) \in \mathscr{V}_{h}$, $p_{h}(t) \in \mathscr{Q}_{h}$, and $\bv{w}_{h}(t) \in \mathscr{Y}_{h}$ such that
\begin{align}
\label{eq: BLM Formal dual discrete}
\begin{multlined}[b][9cm]
{M}_{\svs\sv}\bv{u}_{h}' +  {N}_{\svs\sv}(\bv{u}_{h})\bv{u}_{h} + 
{D}_{\svs\sv}\bv{u}_{h} + \trans{({B}_{\sqs\sv})}p_{h}
\\
+ \delta{M}_{\svs\sv}(\bv{w}_{h})\bv{u}_{h}'
+ \delta{N}_{\svs\sv}(\bv{w}_{h},\bv{w}'_{h},\bv{u}_{h})\bv{u}_{h}
\\
+ \delta{D}_{\svs\sv}(\bv{w}_{h})\bv{u}_{h}
+ S_{\svs\sys}(\bv{w}_{h}){A}_{\sys}(\bv{w}_{h})
\end{multlined}
&= {F}_{\svs} + {G}_{\svs}(\bv{w}_{h}),
\\
\label{eq: incompressibility Formal dual discrete}
{B}_{\sqs\sv}\bv{u}_{h} &= 0,
\\
\label{eq: velocity coupling dual discrete}
{M}_{\sys\sy}\bv{w}_{h}' - {M}_{\sys\sv}(\bv{w}_{h})\bv{u}_{h} &= \bv{0},
\end{align}
where $\bv{u}'_{h}(\bv{x},t) = \sum [u_h^j(t)]' \bv{v}_h^j(\bv{x})$ and $\bv{w}'_{h}(\bv{s},t) = \sum [w_h^j(t)]' \bv{y}_h^j(\bv{s})$, and where the prime denotes ordinary differentiation with respect to time.
\end{problem}

\noindent
Similarly, we reformulate Problem~\ref{prob: IFCS} as follows:
\begin{problem}
\label{prob: IFCS - discrete}
Given constant coefficients $c_{1} > 0$ and $c_{2} = 0 \lor 1$, and given initial conditions $\bv{u}_{0} \in \mathscr{V}_{h}$ and $\bv{w}_{0} \in \mathscr{Y}_{h}$, for all $t \in (0,T)$ find $\bv{u}_{h}(\bv{x},t) \in \mathscr{V}_{h}$, $p_{h}(\bv{x},t) \in \mathscr{Q}_{h}$, and $\bv{w}_{h}(\bv{s},t) \in \mathscr{Y}_{h}$ such that
\begin{align}
\label{eq: BLM Formal dual compressible - discrete}
\begin{multlined}[b][9cm]
{M}_{\svs\sv}\bv{u}'_{h} +  {N}_{\svs\sv}(\bv{u}_{h})\bv{u}_{h} + 
{D}_{\svs\sv}\bv{u}_{h} + \trans{\bigl[{B}_{\sqs\sv} + \delta{B}_{\sqs\sv}(\bv{w}_{h})\bigr]}p_{h}
\\
+ \delta{M}_{\svs\sv}(\bv{w}_{h})\bv{u}'_{h}
+ \delta{N}_{\svs\sv}(\bv{w}_{h},\bv{w}'_{h},\bv{u}_{h})\bv{u}_{h}
\\
+ \delta{D}_{\svs\sv}(\bv{w}_{h})\bv{u}_{h}
+  S_{\svs\sys}(\bv{w}_{h}){A}_{\sys}(\bv{w}_{h})
\end{multlined}
&= {F}_{\svs} + {G}_{\svs}(\bv{w}_{h}),
\\
\label{eq: compressibility Formal dual compressible discrete}
\bigl[ {B}_{\sqs\sv} + \delta{B}_{\sqs\sv}(\bv{w}_{h}) \bigr]\bv{u}_{h} + c_{1} \bigl[\delta{P}_{\sqs\sq}(\bv{w}_{h}) p_{h} - c_{2} \delta{E}_{\sqs\sq}(\bv{u}_{h},\bv{w}_{h},\bv{w}_{h}) \bigr] &= 0,
\\
\label{eq: velocity coupling dual compressible discrete}
{M}_{\sys\sy}\bv{w}'_{h} - {M}_{\sys\sv}(\bv{w}_{h})\bv{u}_{h} &= \bv{0},
\end{align}
where $\bv{u}'_{h}(\bv{x},t) = \sum [u_h^j(t)]' \bv{v}_h^j(\bv{x})$ and $\bv{w}'_{h}(\bv{s},t) = \sum [w_h^j(t)]' \bv{y}_h^j(\bv{s})$, and where the prime denotes ordinary differentiation with respect to time.
\end{problem}

In compact notation, Problems~\ref{prob: IFIS - discrete} and~\ref{prob: IFCS - discrete} can be cast as semi-discrete problems in the space $\mathscr{Z} \supset \mathscr{Z}_h := \mathscr{V}_h \times \mathscr{Q}_h \times \mathscr{Y}_h$ as 
\begin{problem}
\label{prob: Combo prob discrete}
Given an initial condition $\xi_0 \in \mathscr{Z}_h$, for all $t \in (0,T)$ find $\xi_h(t) \in \mathscr{Z}_h$, such that
\begin{equation}
  \label{eq: dae formulation}
  F(t, \xi_h, \xi_h') = 0,
\end{equation}
where
\begin{equation}
  \label{eq: dae formulation bis}
  F^i(t, \xi_h, \xi_h') := \langle \mathcal{F}(t, \xi_h, \xi_h') , \psi^i_h \rangle, \quad i=0, \dots, N_V+N_Q+N_Y,
\end{equation}
and $\mathcal{F}$ has the same meaning as in Eq.~\eqref{eq:formal grouped dual}, with $\psi^i_h$ being the basis function for the spaces $\mathscr{V}_h$, $\mathscr{Q}_h$, or $\mathscr{Y}_h$ corresponding to the given value of $i$.
\end{problem}

\begin{theorem}[Semi-discrete strong consistency]
  \label{th: semi-discrete strong consistency}
  The discrete formulations presented in Problem~\ref{prob: IFIS - discrete} and Problem~\ref{prob: IFCS - discrete}, compactly represented in Problem~\ref{prob: Combo prob discrete}, are strongly consistent. 
\end{theorem}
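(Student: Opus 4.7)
The plan is to exploit the fact that the discrete spaces $\mathscr{V}_h$, $\mathscr{Q}_h$, $\mathscr{Y}_h$ are conforming subspaces of the continuous spaces $\mathscr{V}$, $\mathscr{Q}$, $\mathscr{H}_Y$, and that the discrete operators in Problems~\ref{prob: IFIS - discrete} and~\ref{prob: IFCS - discrete} are obtained by restricting the bilinear/multilinear forms defining the continuous operators in Eqs.~\eqref{eq: MOmega def}--\eqref{eq: CPextradet def} to these subspaces, with no quadrature or variational crime. Strong consistency then amounts to the statement that the residual produced when the exact continuous solution is substituted into the discrete system, and tested against any discrete basis function, is identically zero.

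More concretely, let $(\bv{u},p,\bv{w}) \in \mathscr{V}\times\mathscr{Q}\times\mathscr{Y}$ denote the exact solution of Problem~\ref{prob: IFIS} (respectively Problem~\ref{prob: IFCS}), so that Eq.~\eqref{eq:formal grouped dual} holds for every $\psi \in \mathscr{Z}_0 = \mathscr{V}_0 \times \mathscr{Q} \times \mathscr{H}_Y$. First, I would verify the inclusions $\mathscr{V}_h \subset \mathscr{V}_0$ (modulo the Dirichlet data, which is accounted for in the definition of the affine space for $\bv{u}_h$), $\mathscr{Q}_h \subset \mathscr{Q}$, and $\mathscr{Y}_h \subset \mathscr{H}_Y$; this last inclusion is already implicit in the discussion following Eq.~\eqref{eq: functional space w h}. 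Next, I would point out that, by construction (see the matrix-element definition in Eq.~\eqref{eq:eq:notation matrix element} and the scalar-product form in Eq.~\eqref{eq:duality product in discrete spaces}), the action of each discrete operator on an element of the discrete space, tested against a discrete basis function $\psi^i_h$, is exactly the duality pairing of the corresponding continuous operator evaluated on those same arguments. In particular, substituting the exact solution $(\bv{u},p,\bv{w})$ into the discrete momentum, incompressibility/compressibility and velocity-coupling equations and testing with $\psi^i_h$ reproduces the continuous Eq.~\eqref{eq:formal grouped dual} with $\psi = \psi^i_h$.

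The conclusion then follows immediately: since $\psi^i_h \in \mathscr{Z}_0$, the continuous equation gives $\langle \mathcal{F}(t,\xi,\xi'),\psi^i_h\rangle = 0$ for every $i=1,\ldots,N_V+N_Q+N_Y$, and hence the exact solution satisfies Eq.~\eqref{eq: dae formulation bis} identically, which is precisely the statement of strong consistency. I would close by emphasizing the distinction that is easily confused: strong consistency does not assert that the exact solution lies in $\mathscr{Z}_h$, but only that the discrete residual vanishes on it.

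The only step that requires any care, and which I would treat as the main technical point, is the verification that the Eulerian--Lagrangian coupling operators $\mathcal{S}_{\svs\sys}(\bv{w})\mathcal{A}_{\sys}(\bv{w})$, $\mathcal{M}_{\sys\sv}(\bv{w})$, $\delta\mathcal{M}_{\svs\sv}(\bv{w})$, $\delta\mathcal{N}_{\svs\sv}(\bv{w},\bv{w}',\bv{u})$, $\delta\mathcal{D}_{\svs\sv}(\bv{w})$, $\delta\mathcal{B}_{\sqs\sv}(\bv{w})$, $\delta\mathcal{P}_{\sqs\sq}(\bv{w})$ and $\delta\mathcal{E}_{\sqs}(\bv{u},\bv{w},\bv{w})$ are evaluated with the \emph{exact} deformation map $\bv{x}=\bv{s}+\bv{w}(\bv{s},t)$ and the \emph{exact} fields $\bv{u}$, $p$, without any discrete interpolation or approximation in the composition $\bv{u}\circ\bv{\zeta}$ or in $J[\bv{w}]$, $\tensor{F}[\bv{w}]$, $\hat{\tensor{P}}^e_\s[\bv{w}]$. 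Because the construction of Problems~\ref{prob: IFIS - discrete} and~\ref{prob: IFCS - discrete} does not alter these operators but only restricts the test space, no commutation of interpolation with nonlinear composition is needed, so the reduction goes through and strong consistency holds. (A separate matter, outside this statement, is how these operators are actually assembled by quadrature in a practical code.)
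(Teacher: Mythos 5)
Your proposal is correct and follows essentially the same route as the paper: since the discrete spaces are conforming subspaces of the continuous trial/test spaces and the discrete operators are plain restrictions of the continuous ones, testing the exact solution against any discrete basis function reproduces the continuous weak equations, so the discrete residual vanishes identically. The paper's proof states exactly this in one line; your additional remarks on the Eulerian--Lagrangian coupling operators and on the distinction between consistency and interpolation are sound but not needed beyond what the paper records.
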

\begin{proof}
The claim follows immediately from observing that, for the exact solution $\xi := \trans{[\bv{u}, p, \bv{w}]}$, the equalities 
\begin{equation}
  \label{eq: dae formulation consistency}
  F^i(t, \xi, \xi') := \langle \mathcal{F}(t, \xi, \xi') , \psi^i_h \rangle = 0, \quad i=0, \dots, N_V+N_Q+N_Y,
\end{equation}
are satisfied for any conforming approximation, that is,  whenever $\mathscr{V}_h \subseteq \mathscr{V}$, $\mathscr{Q}_h \subseteq \mathscr{Q}$, and $\mathscr{Y}_h\subseteq\mathscr{Y}$.
\end{proof}

\subsection{Variational velocity coupling}
\label{subsection: FEM implementation}
Earlier in the paper we argued that the use of Dirac-$\delta$ distributions is not a theoretical or practical necessity of immersed methods.  We now illustrate our implementation of the operators embodying the \acro{FSI} using the standard ``infrastructure'' of typical \acro{FEM} codes.

The operators $M_{\svs\sv}$, $N_{\svs\sv}(\bv{u}_{h})$, $D_{\svs\sv}$, $B_{\sqs\sv}$, and $F_{\svs}$ in Problems~\ref{prob: IFIS - discrete} and~\ref{prob: IFCS - discrete} are common in variational formulations of the Navier-Stokes problem.  We implemented them in a standard fashion.  The operator $M_{\sys\sy}$ is the mass matrix of the space $\mathscr{Y}_{h}$ and, again, its implementation is standard.  The non-standard operators in our formulation are those with a nonlinear parametric dependence on the field $\bv{w}$ (the motion of the solid).  We now discuss the construction of the matrix $M_{\sys\sv}(\bv{w})$ (corresponding to the operator defined in Eq.~\eqref{eq: MGamma def}) which is responsible for a successful coupling of velocities between the fluid and the solid domain. 

For convenience, we recall that the entries of the matrix ${M}_{\sys\sv}(\bv{w}_{h})$ are given by:
\begin{equation}
  \begin{aligned}[b]
  \label{eq: MGamma def bis}
  {M}_{\sys\sv}^{ij}(\bv{w}_{h}) & = \prescript{}{\mathscr{H}_{Y}^{*}}{\bigl\langle}
  \mathcal{M}_{\sys\sv}(\bv{w}_{h}) \bv{v}^j_{h}, \bv{y}^i_{h}
  \big\rangle_{\mathscr{H}_{Y}} \\
  & = \Phi_{B} \int_{B}  \bv{v}^j_{h}(\bv{x})\big|_{\bv{x} = \bv{s} + \bv{w}_{h}(\bv{s},t)} \cdot \bv{y}^i_{h}(\bv{s}) \d{V}.
 \end{aligned}
\end{equation}
The construction of $M_{\sys\sv}(\bv{w}_{h})$ requires that we compute the integral in Eq.~\eqref{eq: MGamma def bis}.  As is typical in \acro{FEM}, this is done by summing the contributions due to each cell $K$ of the triangulation $B_{h}$.  These contributions are computed  using quadrature rules with $N_Q$ points.  That is, the contribution of an individual cell is computed by summing the value of the products of the integrand at the quadrature points times the corresponding quadrature weight.  The integrand consists of the functions $\bv{y}^i_{h}(\bv{s})$, whose support is defined over the triangulation of $B_{h}$, and of the functions $\bv{v}^j_{h}(\bv{x})$ (with $\bv{x} = \bv{s} + \bv{w}_{h}(\bv{s},t)$) whose support is instead defined over the triangulation $\Omega_{h}$.
\begin{figure}[htb]
    \centering
    \includegraphics{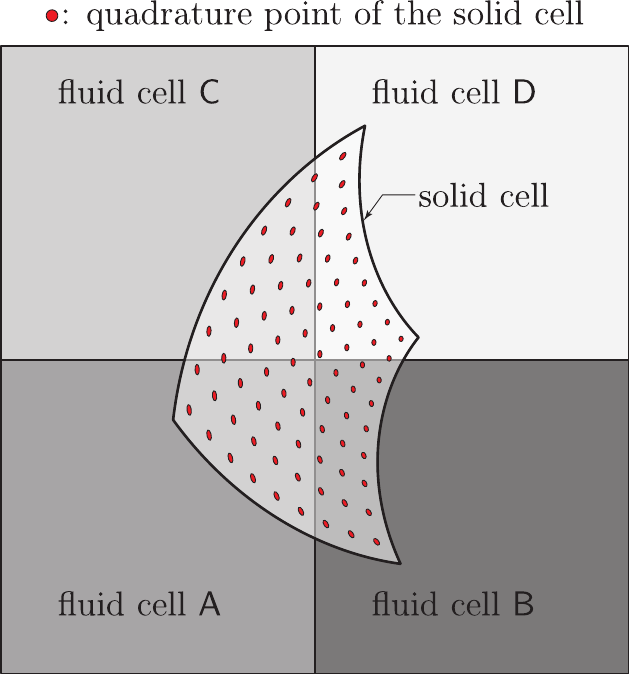}
    \caption{Cells denote as \textsf{A}--\textsf{D} represent a four-cell patch of the triangulation of the fluid domain.  The cell denoted as ``solid cell'' represents a cell of the triangulation of the immersed solid domain that is contained in the union of cells \textsf{A}--\textsf{D} of the fluid domain.  The filled dots represent the quadrature points of the quadrature rule adopted to carry out integration over the cells of the immersed domain.}
    \label{fig: integration}
\end{figure}
Operationally, we perform this calculation as follows.  First we determine the position of the quadrature points of the solid element, both relative to the reference unit element and relative to the global coordinate system adopted for the calculation, through the mappings:
\begin{alignat}{3}
  \label{eq:mapping Khat K solid}
  \bv{s}_K & : \hat{K} := [0,1]^d &&\mapsto K \in B_h, \\
  \label{eq:mapping K K solid}
  I+\bv{w}_h & : K && \mapsto \text{solid cell}.
\end{alignat}
Next, the global coordinates of the quadrature points (obtained through the mappings in Eqs.~\eqref{eq:mapping Khat K solid} and~\eqref{eq:mapping K K solid}) are passed to a search algorithm that identifies the fluid cells in $\Omega_{h}$ containing the points in question, at which the functions $\bv{v}_h^j$ are evaluated. The outcome of this operation is sketched in Fig.~\ref{fig: integration} where, as a way of example, we show the image (under the motion $\bv{s} + \bv{w}_{h}(\bv{s},t)$) of a cell of $B_{h}$ straddling four cells of $\Omega_{h}$ denoted fluid cells \textsf{A}--\textsf{D}.  The quadrature points over the solid cell are denoted by filled circles.  The contribution to the integral in Eq.~\eqref{eq: MGamma def bis} due to the solid cell is then computed by summing the partial contributions corresponding to each of the fluid cells intersecting the solid cell in question:
\begin{equation}
  \begin{aligned}[b]
  \label{eq: MGamma def tris}
  {M}_{\sys\sv}^{ij}(\bv{w}_{h}) & =
  \sum_{K\in B_h} \int_{K} \bv{v}^j_{h}(\bv{x})\big|_{\bv{x} = \bv{s} + \bv{w}_{h}(\bv{s},t)} \cdot \bv{y}^i_{h}(\bv{s}) \d{V},
  \\
  & \sim \sum_{K\in B_h} \sum_{q=1}^{N_{K,q}} \bv{v}^j_{h}(\bv{x})\big|_{\bv{x} = \bv{s_{K,q}} + \bv{w}_{h}(\bv{s_{K,q}},t)} \cdot \bv{y}^i_{h}(\bv{s}_{K,q}) \omega_{K,q},
 \end{aligned}
\end{equation}
where we denoted with $\bv{s}_{K,q}$ the transformation of the $q$-th quadrature point under the mapping $\bv{s}_K$, defined in Eq.~\eqref{eq:mapping Khat K solid}, and with $\omega_{K,q}$ the corresponding quadrature weight.   In general, the number of quadrature points corresponding to each partial contribution varies. The implementation of an efficient search algorithm responsible for identifying the fluid cells that define the partition of an individual solid cell is the only technically challenging part of the proposed immersed method.  However, several standard techniques are available to deal with this task (see, e.g., \citealp{GridGenHandbook_1998_0,BergCheong_2008_Computational_0}).  Once the fluid cells containing the quadrature points of a given solid cell are found, we determine the value of $\bv{v}^j_{h}$ at the quadrature points using the interpolation infrastructure inherent in the finite element representation of fields defined over $\Omega_{h}$. Our finite element code was developed using the finite element library \texttt{deal.II} \citep{BangerthHartmannKanschat-2007-a}, which provides built-in facilities to carry out precisely this type of calculation. 

\subsection{Variational force coupling}
\label{sec:force coupling}
The coupling between the Eulerian and Lagrangian frameworks is embodied by the operator $\mathcal{S}_{\svs\sys}(\bv{w})$. The discrete version of this operator is constructed using the discrete versions of the operators $\mathcal{M}_{\sys\sv}(\bv{w})$ and $\mathcal{M}_{\sys\sy}$, and Theorem~\ref{th: eulerian vs lagrangian stiffness}:
\begin{equation}
  \label{eq:defition S discrete}
  S_{\svs\sys}(\bv{w}_h) :=  \trans{M}_{\sys\sv}(\bv{w}_h) M^{-1}_{\sys\sy},
\end{equation}
where $M_{\sys\sy}$ is the usual mass matrix for the space $\mathscr{Y}_{h}$, and $\trans{M}_{\sys\sv}(\bv{w}_h)$ is the transpose of the coupling matrix discussed in Section~\ref{subsection: FEM implementation}.

As we observed in Remark~\ref{rem:eulerian vs lagrangian elasticity}, the operators $\mathcal{A}_{\svs}(\bv{h},\bv{w})$ and $\mathcal{S}_{\svs\sys}(\bv{h}) \mathcal{A}_{\sys}(\bv{w})$ are equivalent in the abstract variational formulation. At the discrete level, however, this is no longer the case. When approximating $\mathcal{A}_{\sys}(\bv{w})$, one needs to integrate terms that contain the \emph{gradient} of the basis functions $\bv{y}_h^i$ of the space $\mathscr{Y}_h$. By contrast, the approximation of $\mathcal{A}_{\svs}(\bv{h},\bv{w})$ requires the evaluation of the gradients of the basis functions $\bv{v}_h^i$, in the space $\mathscr{V}_h$, under the map $\bv{s}+\bv{w}_h(\bv{s},t)$. In general, we have that
\begin{equation}
  \label{eq:difference between Ah and the other}
   A_{\svs}(\bv{h}_h, \bv{w}_h)^j := \prescript{}{\mathscr{V}^{*}}{\bigl\langle}
   \mathcal{A}_{\svs}(\bv{h}_h, \bv{w}_h), \bv{v}^j_h
    \big\rangle_{\mathscr{V}} \neq 
    \bigl(\trans{M}_{\sys\sv}(\bv{h}_h) M^{-1}_{\sys\sy} A_{\sys}(\bv{w}_h)\bigr)^j =: \bigl(S_{\svs\sys}(\bv{h}_h) A_{\sys}(\bv{w}_h)\bigr)^j
\end{equation}
and no equivalence can be shown in the discrete space between the two discrete operators in Eq.~(\ref{eq:difference between Ah and the other}).  In principle one could use in the discretization the natural definition of the discrete operator $A_{\svs}(\bv{h}_h, \bv{w}_h)$.  However, only the right hand side of Eq.~\eqref{eq:difference between Ah and the other} can be shown to satisfy a discrete energy estimate equal to that in Eq.~\eqref{eq:elastic energy estimate}:

\begin{theorem}[Discrete energy estimate for immersed elastic operator]
  \label{th: DEE-IEO}
  Given an elasticity operator $\mathcal{A}_{\sys}(\bv{w})$ and its discrete counterpart $A_\sys(\bv{w}_h)$, then the discrete operator 
  \begin{equation}
    \label{eq:discrete immersed elastic operator}
    S_{\svs\sys}(\bv{h}_h) A_{\sys}(\bv{w}_h) := 
    \trans{M}_{\sys\sv}(\bv{h}_h) M^{-1}_{\sys\sy} A_{\sys}(\bv{w}_h)
  \end{equation}
  satisfies the following semi-discrete energy estimate, whenever Eq.~\eqref{eq: velocity coupling dual discrete} or Eq.~\eqref{eq: velocity coupling dual compressible discrete} are satisfied:
  \begin{equation}
    \label{eq:discrete coupled energy estimate}
    \bigl(S_{\svs\sys}(\bv{w}_h)  A_{\sys}(\bv{w}_h)\bigr)\cdot\bv{u}_{h} =  \frac{\d{}}{\d{t}} \int_B W^{e}_{\s}(\tensor{F}[\bv{w}_h]) \d{V}.
  \end{equation}
\end{theorem}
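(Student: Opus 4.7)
The approach is to mirror the argument of Lemma~\ref{th: IEO abstract variational formulation} at the discrete level, replacing duality manipulations by matrix transpose identities. Since the discretization is conforming, with $\mathscr{Y}_h \subset \mathscr{Y}$ and hence $\mathscr{Y}_h \subset \mathscr{H}_Y$, and since $\bv{w}_h$ and $\bv{w}'_h$ are both elements of $\mathscr{Y}_h$, the continuous Lagrangian elastic identity Eq.~\eqref{eq:lagrangian elastic energy estimate} can be applied verbatim with $\bv{w}$ and $\bv{w}'$ replaced by $\bv{w}_h$ and $\bv{w}'_h$. Expanding $\bv{w}'_h$ on the basis $\{\bv{y}^i_h\}$ of $\mathscr{Y}_h$ and using linearity of the duality product together with the definition of the column vector $A_{\sys}(\bv{w}_h)$, this first step would yield
\begin{equation*}
A_{\sys}(\bv{w}_h) \cdot \bv{w}'_h = \frac{\d{}}{\d{t}} \int_B W^{e}_{\s}(\tensor{F}[\bv{w}_h]) \d{V},
\end{equation*}
where the left-hand side is the standard scalar product in $\mathbb{R}^{N_Y}$.

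Next I would invert the discrete velocity coupling. The matrix $M_{\sys\sy}$ is the symmetric positive-definite mass matrix associated with the restriction of the $\mathscr{H}_Y$ inner product to $\mathscr{Y}_h$, so it is invertible and satisfies $\trans{(M^{-1}_{\sys\sy})} = M^{-1}_{\sys\sy}$. Equation~\eqref{eq: velocity coupling dual discrete} (or equivalently Eq.~\eqref{eq: velocity coupling dual compressible discrete}) then delivers $\bv{w}'_h = M^{-1}_{\sys\sy} M_{\sys\sv}(\bv{w}_h) \bv{u}_h$ explicitly. Substituting this expression into the previous display and applying the elementary adjoint rule $\bv{a} \cdot (C \bv{b}) = (\trans{C} \bv{a}) \cdot \bv{b}$ together with the symmetry of $M^{-1}_{\sys\sy}$ would give
\begin{equation*}
\bigl(\trans{M}_{\sys\sv}(\bv{w}_h)\, M^{-1}_{\sys\sy} A_{\sys}(\bv{w}_h)\bigr) \cdot \bv{u}_h = \frac{\d{}}{\d{t}} \int_B W^{e}_{\s}(\tensor{F}[\bv{w}_h]) \d{V},
\end{equation*}
and by the discrete definition of $S_{\svs\sys}(\bv{w}_h)$ in Eq.~\eqref{eq:defition S discrete} the factor in parentheses is exactly $S_{\svs\sys}(\bv{w}_h) A_{\sys}(\bv{w}_h)$, which closes the proof.

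The algebra is elementary, so the main conceptual point to highlight is \emph{why} this argument works for $S_{\svs\sys}(\bv{w}_h) A_{\sys}(\bv{w}_h)$ but not for the natural Eulerian discrete operator $A_{\svs}(\bv{w}_h,\bv{w}_h)$. The proof is driven entirely by the Lagrangian chain-rule identity that produces $\mathcal{A}_{\sys}(\bv{w})$ from the time derivative of $W^{e}_{\s}(\tensor{F}[\bv{w}])$; this identity survives any conforming discretization of $\mathscr{Y}$ because the elastic energy is a pointwise function of $\tensor{F}[\bv{w}_h]$. By contrast, the Eulerian version of the identity used in the strong-form proof (essentially Eq.~\eqref{eq:F grad u = Fdot}) holds only when $\bv{u}$ and $\bv{w}'$ are related pointwise through Eq.~\eqref{eq: w u rel}, a condition that is only enforced weakly in the discrete problem by the coupling equation. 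This is precisely the failure of equivalence recorded in Eq.~\eqref{eq:difference between Ah and the other}, and it is what forces the formulation to be built around the composite operator $S_{\svs\sys}(\bv{w}_h) A_{\sys}(\bv{w}_h)$ rather than the more direct $A_{\svs}(\bv{w}_h,\bv{w}_h)$.
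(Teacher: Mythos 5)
Your proposal is correct and follows essentially the same route as the paper's proof: both combine the discrete Lagrangian identity $A_{\sys}(\bv{w}_h)\cdot\bv{w}_h' = \tfrac{\d{}}{\d{t}}\int_B W^{e}_{\s}(\tensor{F}[\bv{w}_h])\d{V}$ with the discrete velocity coupling, the symmetry of the mass matrix $M_{\sys\sy}$, and transposition to move $\trans{M}_{\sys\sv}(\bv{w}_h)M^{-1}_{\sys\sy}$ onto $A_{\sys}(\bv{w}_h)$. The only cosmetic difference is that you solve the coupling equation explicitly for $\bv{w}_h'$ and substitute, whereas the paper takes the scalar product of the coupling equation with $M^{-1}_{\sys\sy}A_{\sys}(\bv{w}_h)$; the two manipulations are algebraically identical.
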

\begin{proof}
  The proof follows closely that of Lemma~\ref{th: IEO abstract variational formulation}.  If we take the scalar product of the semi-discrete version
  of the velocity coupling Eq.~\eqref{eq: velocity coupling dual
    discrete} with the term $\bigl(M^{-1}_{\sys\sy} A_{\sys}(\bv{w}_h)
  \bigr)$, we obtain
  \begin{multline}
    \label{eq:discrete velocity coupling}
    \bigl(M^{-1}_{\sys\sy} A_{\sys}(\bv{w}_h)
    \bigr)\cdot{M}_{\sys\sy}\bv{w}_{h}' - \bigl(M^{-1}_{\sys\sy}
    A_{\sys}(\bv{w}_h) \bigr)\cdot{M}_{\sys\sv}(\bv{w}_{h})\bv{u}_{h}
    \\
    =
    {A}_{\sys}(\bv{w}_h) \cdot\bv{w}_{h}' -
    \bigl(\trans{M}_{\sys\sv}(\bv{w}_{h})M^{-1}_{\sys\sy}
    A_{\sys}(\bv{w}_h) \bigr)\cdot\bv{u}_{h} = 0. \end{multline}
  The discrete estimate deriving from
  Eq.~\eqref{eq:lagrangian elastic energy estimate} then gives immediately the semi-discrete estimate of
  Eq.~\eqref{eq:discrete coupled energy estimate}.
 \end{proof}

\begin{remark}[Spread operator]
Using the approximation strategy described in Eq.~\eqref{eq:difference between Ah and the other}, the only operator that couples \emph{directly} the Eulerian and the Lagrangian framework is $\mathcal{M}_{\sys\sv}(\bv{w})$, whose implementation details have been discussed in Section~\ref{subsection: FEM implementation}. Notice that it is essential to use the same operator in the momentum conservation equation (specifically, its adjoint) to obtain the discrete stability estimate in Eq.~\eqref{eq:discrete coupled energy estimate}.  In the \acro{IBM} literature, the adjoint of $\mathcal{M}_{\sys\sv}(\bv{w})$ is also known as the \emph{spread} operator, because of its role in distributing the forces due to the elastic deformation of the immersed domain to the underlying fluid domain.
\end{remark}

\subsection{Semi discrete stability estimates}
\label{sec:semi discrete stability}
Repeating all passages from Eq.~(\ref{eq:missing term in seimidiscrete}) to Eq.~(\ref{eq: TPE rel continuous}) in the discrete space $\mathscr{V}_h$, we wish we could show semi-discrete stability estimates equivalent to those of the abstract variational formulation.  Unfortunately, contrary to what can be done in the continuous case, in the discrete problem we \emph{cannot} invoke Eq.~\eqref{eq: TTMB ID} in Lemma~\eqref{lemma: transport with mass densities} to say
\begin{equation}
\label{Eq: Navier-Stokes wish discrete}
\int_{\Omega\setminus B_{t}} \tfrac{1}{2} \rho_{\f} \, \dot{\overline{\bv{u}_{h}^{2}}} \d{v}
=
\frac{\nsd{}}{\nsd{t}} \int_{\Omega\setminus B_{t}} \tfrac{1}{2} \rho_{\f} \, \bv{u}_{h}^{2} \d{v} + \int_{\partial\Omega_N} \tfrac{1}{2} \rho_{\f} \, \bv{u}_{h}^{2} \,  \bv{u}_h \cdot \bv{m} \d{a}.
\end{equation}
This is a well known fact in the discretization of the Navier-Stokes equations.  That is, there are stability issues related with the non-linear transport term $\mathcal{N}_{\svs\sv}(\bv{u})$ defined in Eq.~\eqref{eq: NOmega def}. These stability issues originate from the fact that the approximation of Eq.~\eqref{eq: Balance of mass} in the numerical scheme is not satisfied pointwise.  And it is this fact that prevents us from a direct application of Theorems~\ref{th: immersed in control volume} and~\ref{th: TPE}.  With this in mind, we also know that stabilization techniques for the operator in question exist that lead to stable formulations when $\partial\Omega_{N} = \emptyset$ (see, e.g., 
\citealp{HeywoodRannacher_1982_Finite_0}).  Therefore, in the present paper we will limit ourselves to appealing to such stabilization techniques and assume that Eq.~\eqref{Eq: Navier-Stokes wish discrete} is satisfied also at the discrete level.

\begin{theorem}[Semi discrete energy estimate]
  Let $\bv{u}_h$, $p_h$ and $\bv{w}_h$, be the discrete solutions of either Problem~\ref{prob: IFIS - discrete} or Problem~\ref{prob: IFCS - discrete}. Assuming that a stabilized non-linear term $ N_{\svs\sv}(\bv{u}_h) $ is used, such that Eq.~\eqref{Eq: Navier-Stokes wish discrete} is satisfied, then the following semi discrete energy estimate is satisfied
\begin{multline}
\label{eq: TPE rel semi discrete}
  \int_{\Omega} \rho \bv{b} \cdot \bv{u}_h \d{v} +
  \int_{\partial\Omega_{N}} \bv{\tau}_{g} \cdot \bv{u}_h \d{a}
  = 
  \frac{\nsd{}}{\nsd{t}} \int_{\Omega} \kappa_{h} \d{v} + \int_{\partial\Omega_N}  \kappa \bv{u}_h \cdot \bv{m} \d{a}
  \\
  + \int_{\Omega} \hat{\tensor{T}}^{v}[\bv{u}_h] \cdot \nabla\bv{u}_h \d{v} + \frac{\nsd{}}{\nsd{t}} \int_B W^{e}_{\s}(\tensor{F}[\bv{w}_h]) \d{V},
\end{multline}
where $\kappa_{h} = \tfrac{1}{2} \rho \bv{u}_{h}^{2}$.
\end{theorem}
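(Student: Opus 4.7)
The strategy mimics the proof of Theorem~\ref{th: Continuous energy estimate}, executed in the semi-discrete setting: I would test the discrete momentum balance~\eqref{eq: BLM Formal dual discrete} (or its compressible counterpart~\eqref{eq: BLM Formal dual compressible - discrete}) against the admissible test function $\bv{u}_h \in \mathscr{V}_h \subseteq \mathscr{V}_0$ (up to a lift for inhomogeneous Dirichlet data), and then reduce each operator term to one of the four groups appearing in the target identity~\eqref{eq: TPE rel semi discrete}.

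For the four inertial terms involving $M_{\svs\sv}$, $N_{\svs\sv}$, $\delta M_{\svs\sv}$ and $\delta N_{\svs\sv}$, the algebraic derivations of Eqs.~\eqref{eq:missing term in seimidiscrete} and~\eqref{eq:approx delta k} carry over verbatim with $\bv{u}$ and $\bv{w}$ replaced by $\bv{u}_h$ and $\bv{w}_h$, since they use only the pointwise chain rule on the reference domain $B$ (which remains fixed, so Theorem~\ref{th: GTT} applies as is) and the definitions of the operators. Combining these with the standing hypothesis~\eqref{Eq: Navier-Stokes wish discrete} produces the discrete analog of Eq.~\eqref{eq: kin energy estimate}, yielding $\frac{\nsd{}}{\nsd{t}}\int_\Omega \kappa_h\d{v} + \int_{\partial\Omega_N}\kappa_h\bv{u}_h\cdot\bv{m}\d{a}$.

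The viscous group $\bv{u}_h \cdot D_{\svs\sv}\bv{u}_h + \bv{u}_h \cdot \delta D_{\svs\sv}(\bv{w}_h)\bv{u}_h$ reduces to $\int_\Omega \hat{\tensor{T}}^{v}[\bv{u}_h]\cdot\nabla\bv{u}_h \d{v}$ by exactly the algebraic manipulation behind Eq.~\eqref{eq:total viscous dissipative}, which is definitional and transfers unchanged. The elastic term is handled directly by Theorem~\ref{th: DEE-IEO}, which gives $\frac{\nsd{}}{\nsd{t}}\int_B W^{e}_{\s}(\tensor{F}[\bv{w}_h])\d{V}$; this is precisely why the discrete problem is formulated with $S_{\svs\sys}(\bv{w}_h)A_{\sys}(\bv{w}_h)$ rather than with a direct discretization of $\mathcal{A}_{\svs}(\bv{w},\bv{w})$ (cf.\ Remark~\ref{rem:eulerian vs lagrangian elasticity}). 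The pressure contribution vanishes: in the incompressible case it reduces to $p_h \cdot B_{\sqs\sv}\bv{u}_h$, which is zero by~\eqref{eq: incompressibility Formal dual discrete}; in the compressible case, the complementarity of supports noted in the Remark after Problem~\ref{prob: IFCS} makes each block of~\eqref{eq: compressibility Formal dual compressible discrete} vanish individually, so $\trans{[B_{\sqs\sv}+\delta B_{\sqs\sv}(\bv{w}_h)]}p_h \cdot \bv{u}_h = 0$. Finally, the forcing sum $\mathcal{F}_{\svs}\cdot\bv{u}_h + \mathcal{G}_{\svs}(\bv{w}_h)\cdot\bv{u}_h$ reassembles, after pulling the integral over $B$ back to $B_t$ via $\bv{x} = \bv{s}+\bv{w}_h(\bv{s},t)$, into $\int_\Omega\rho\bv{b}\cdot\bv{u}_h\d{v} + \int_{\partial\Omega_N}\bv{\tau}_g\cdot\bv{u}_h\d{a}$, matching the left-hand side of~\eqref{eq: TPE rel semi discrete}.

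The main obstacle is entirely encapsulated in the hypothesis of the theorem: the discrete velocity $\bv{u}_h$ does not satisfy mass conservation pointwise, so the Eulerian transport identity~\eqref{Eq: Navier-Stokes wish}, on which the continuous proof relied implicitly, fails for $\bv{u}_h$. This is the only place where the semi-discrete proof cannot simply echo the continuous one, and it is precisely why the statement is conditional on~\eqref{Eq: Navier-Stokes wish discrete}, to be restored by a standard stabilization of the nonlinear convection operator; once this is granted, all remaining passages are a faithful discrete replay of the continuous argument in Theorem~\ref{th: Continuous energy estimate}.
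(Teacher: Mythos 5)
Your overall strategy coincides with the paper's own argument: the proof given in the paper is essentially the discussion preceding the theorem, namely repeat the passages from Eq.~\eqref{eq:missing term in seimidiscrete} through Eq.~\eqref{eq: TPE rel continuous} verbatim in the discrete spaces, use Theorem~\ref{th: DEE-IEO} for the elastic term (this is exactly why the discrete problem carries $S_{\svs\sys}(\bv{w}_h)A_{\sys}(\bv{w}_h)$), and assume Eq.~\eqref{Eq: Navier-Stokes wish discrete} because Lemma~\ref{lemma: transport with mass densities} is unavailable when mass conservation holds only weakly. Your treatment of the inertial, viscous, elastic and forcing terms, and of the pressure term in the incompressible case via Eq.~\eqref{eq: incompressibility Formal dual discrete}, is a faithful rendering of that plan.

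The one step that is not justified is your disposal of the pressure in the compressible branch. The complementarity argument in the remark following Problem~\ref{prob: IFCS} is intrinsically a continuous-level argument: it works because $q$ ranges over all of $\mathscr{Q}=L^2(\Omega)$ and can therefore be localized to $\Omega\setminus B_t$ and to $B_t$ separately. At the discrete level, Eq.~\eqref{eq: compressibility Formal dual compressible discrete} only says that the \emph{sum} of the two blocks annihilates every $q_h\in\mathscr{Q}_h$, and discrete pressure basis functions generically straddle $\partial B_t$, so you cannot conclude that each block vanishes individually. Taking $q_h=p_h$ gives instead
\begin{equation*}
p_h\cdot\bigl[B_{\sqs\sv}+\delta B_{\sqs\sv}(\bv{w}_h)\bigr]\bv{u}_h
=-c_1\bigl[p_h\cdot\delta P_{\sqs\sq}(\bv{w}_h)\,p_h-c_2\,p_h\cdot\delta E_{\sqs\sq}(\bv{u}_h,\bv{w}_h,\bv{w}_h)\bigr],
\end{equation*}
which is not zero in general, so in the compressible case the semi-discrete identity acquires extra pressure terms unless a discrete analogue of the complementarity is assumed (or the meshes resolve $\partial B_t$). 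To be fair, the paper glosses over exactly the same point (its remark on removing the pressure terms is stated only for the continuous estimate), so your proposal matches the paper's level of rigor everywhere else; but the sentence claiming that each block of Eq.~\eqref{eq: compressibility Formal dual compressible discrete} vanishes individually is an overclaim, not a consequence of the discrete equations.
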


\subsection{Time discretization}
\label{sec:time-discretization}
Equation~\eqref{eq: dae formulation} represents a system of nonlinear differential algebraic equations (\acro{DAE}), which we solve using the \acro{IDA} package of the \acro{SUNDIALS} OpenSource library~\citep{HindmarshBrownGrant-2005-a}.  As stated in the package's documentation (see p.~374 and~375 in \citealp{HindmarshBrownGrant-2005-a})\footnote{We quoted directly from the \acro{SUNDIALS} documentation. However, we adjusted the notation so as to be consistent with ours and we numbered equations according to their order in this paper.}
\begin{quote}
The integration method in \acro{IDA} is variable-order,
  variable-coefficient \acro{BDF} [backward difference formula], in fixed-leading-coefficient form. The method order ranges from 1 to 5, with the \acro{BDF} of order $q$ given by the multistep formula
  \begin{equation}
    \label{eq: bdf or order q}
    \sum_{i=0}^{q} \alpha_{n,i} \xi_{n-i} = h_n \xi_n',
  \end{equation}
where $\xi_{n}$ and $\xi_{n}'$ are the computed approximations to $\xi(t_{n})$ and $\xi'(t_{n})$, respectively, and the step size is $h_{n} = t_{n} - t_{n-1}$. The coefficients $\alpha_{n,i}$ are uniquely determined by the order $q$, and the history of the step sizes.  The application of the \acro{BDF} [in Eq.]~\eqref{eq: bdf or order q} to the \acro{DAE} system [in Eq.]~\eqref{eq: dae formulation} results in a nonlinear algebraic system to be solved at each step:
  \begin{equation}
    \label{eq:dae algebraic system}
    G(\xi_{n}) \equiv F\biggl(t_{n}, \xi_{n},  h_{n}^{-1} \sum_{i=0}^{q} \alpha_{n,i} \xi_{n-i}\biggr) = 0.
  \end{equation}
Regardless of the method options, the solution of the nonlinear system [in Eq.]~\eqref{eq:dae algebraic system} is accomplished with some form of Newton iteration. This leads to a linear system for each Newton correction, of the form
  \begin{equation}
    \label{eq:dae newton correction}
    J[\xi_{n,m+1}-\xi_{n,m}] = -G(\xi_{n,m}),
  \end{equation}
where $\xi_{n,m}$ is the $m$th approximation to $\xi_{m}$. Here $J$ is  some approximation to the system Jacobian
\begin{equation}
  \label{eq:dae Jacobian}
  J = \frac{\partial G}{\partial \xi} = \frac{\partial F}{\partial \xi} +\alpha \frac{\partial F}{\partial \xi'},
\end{equation}
where $\alpha = \alpha_{n,0}/h_{n}$. The scalar $\alpha$ changes whenever the step size or method order changes. 
\end{quote}
In our finite element implementation, we assemble the residual $G(\xi_{n,m})$ at each Newton correction, and let the Sacado package of the Trilinos library \citep{BartlettGayPhipps-2006-a,Gay-1991-a,HerouxBartlettHoekstra-2003-a} compute the Jacobian in Eq.~\eqref{eq:dae Jacobian}. The detailed procedure used in our code to compute the Jacobian through Sacado was taken almost verbatim from the tutorial program \texttt{step-33} of the deal.II library~(\citealp{BangerthHartmann-deal.II-Differential--0}). The final system is solved using a preconditioned GMRES iterative method (see, e.g., \citealp{GolubVan-Loan-1996-a}).

\section{Numerics}
\label{sec: Numerics}

We present a numerical experiment designed to test the main characteristics of the proposed immersed method and those elements that distinguish it from other methods in the literature.  Specifically, we consider the case of a solid with mass density different from that of the fluid.  The solid is assumed to be compressible and viscoelastic.  The dynamic viscosity of the solid is taken to be twice that of the fluid and the elastic part of the behavior was chosen to be a compressible neo-Hookean material. The fluid is modeled as truly incompressible, as opposed to nearly incompressible.

\subsection{Discretization}

The approximation spaces we used in our simulations are the piecewise bi-quadratic spaces of continuous vector functions over $\Omega$ and over $B$ for the approximations of the velocity field $\bv{u}_h$ and of the displacement field $\bv{w}_h$ (usually referred to as the continuous $\mathcal{Q}^2$ space), and the piecewise discontinuous linear space  $\mathcal{P}^1$ over $\Omega$ for the approximation of the pressure field $p$.

The $\mathcal{Q}^2-\mathcal{P}^1$ pair of spaces is known to satisfy the inf-sup condition for the approximation of the Navier-Stokes part of our equations (see, e.g., \citealp{BrezziFortin-1991-a}), while the choice of the space $\mathcal{Q}^2$ for the displacement variable $\bv{w}_h$ is a natural choice, given the underlying velocity field $\bv{u}_h$. With this choice of spaces, Eqs.~\eqref{eq: velocity coupling dual compressible discrete} and~\eqref{eq:  velocity coupling dual discrete} can be satisfied exactly when the solid and the fluid meshes are matching. 

One of the advantages of immersed methods is the possibility to select the meshes over the fluid and the solid domains independently.  However, accuracy issues may arise if the mesh over the solid domain is not sufficiently refined relative to that for the fluid domain. It has been observed (see, e.g., \citealp{Peskin_2002_The-immersed_0}) that a reasonable choice is to take the mapped solid mesh size $h_{\s}$ to be at least one half of the fluid mesh size $h_{\f}$. This choice finds its justification in the approximation properties of both the velocity and the force coupling schemes presented in Section~\ref{subsection: FEM implementation} and Section~\ref{sec:force coupling}. It is essential for the success of immersed methods that the integrals presented in Eq.~\eqref{eq: MGamma def tris} be approximated as accurately as possible. Independently on the choice of approximating spaces, there will be errors in the approximation of these integrals due to the non-matching nature of the fluid and solid meshes. If one uses a fixed number of quadrature points (as in our case), reducing $h_{\s}$ while maintaing $h_{\f}$ constant increases the accuracy of those integrals up to the point in which one element of the solid mesh is entirely contained in an element of the fluid mesh. Further reduction of the solid mesh size beyond this point is not useful, since it only increases the computational cost, without adding accuracy to the method, which is bounded anyway by the fluid mesh size $h_{\f}$. 

The choice $h_{\s} \approx \tfrac{1}{2} h_{\f}$ is a reasonable compromise, for which most of the solid elements are fully contained in a fluid element, and each solid element spans \emph{at most} four elements of the fluid mesh. At run time, whenever a solid mesh element is distorted to span more than four fluid mesh elements, the element in question should be refined to increase the accuracy of the method. Currently, such tests are not implemented in our code, and we select a slightly finer solid mesh to prevent distortion from causing a drift in the accuracy of the method.

An alternative solution is to use adaptive quadrature rules in the approximation of the integrals in Eq.~\eqref{eq: MGamma def tris}, as done, for example, in~\cite{GriffithLuo-2012-a}. This approach allows one to choose $h_{\s}$ independently from $h_{\f}$, and it works effectively even in the case where the solid cell spans several fluid cells. Conservation of mass in this case may be an issue, since the details at which the fluid evolves in the background may not be captured accurately enough by the solid mesh (see~\cite{Griffith-2012-a} for detailed discussion on volume conservation in Immersed Boundary Methods).

\subsection{Constitutive settings}
\label{sec:setting}

We present a simple numerical example concerning a two-dimensional rubber disk, modeled by a viscoelastic compressible material, where the elastic part of the behavior is that of a compressible neo-Hookean material.  The disk is pre-deformed with a uniform compression that changes its diameter to a fraction of its diameter in the reference configuration, and then it is released from rest in a two-dimensional box containing a fluid, also at rest. The dynamic viscosity and mass density of the fluid are $\mu_{\f} = \np[kg/(m^2\ucdot s)]{e-2}$ and density $\rho_{\f} = \np[kg/m^{2}]{1}$, respectively. On the top side of the box we impose homogeneous Neumann boundary conditions, to allow the fluid to enter and exit the box, while on the other three sides we impose homogeneous Dirichlet boundary conditions. 

The reference configuration of the solid is a disk of diameter $\phi = \np[m]{0.125}$, centered at the origin. Its initial displacement field is given by
\begin{equation}
  \label{eq:W0 test 1}
   \bv{w}_0 :=
     \begin{pmatrix}
       -{0.3} s_{1} + \np[m]{0.6} \\
       -{0.3} s_{2} + \np[m]{0.4}
     \end{pmatrix},
\end{equation}
where $s_{1}$ $s_{2}$, expressed in meters, denote the coordinates of points in $B$ relative to the chosen Cartesian coordinate system.  Referring to Eq.~\eqref{eq: Cauchy Response Function}, the constitutive response function of the solid is $\hat{\tensor{T}}_{\s} = \hat{\tensor{T}}^{e}_{\s} + \hat{\tensor{T}}^{v}_{\s}$ with
\begin{equation}
\label{eq: Cauchy Response Function example}
\hat{\tensor{T}}^{e}_{\s} = J^{-1} \hat{\tensor{P}}_{\s}^{e} \trans{\tensor{F}},
\quad
 \hat{\tensor{P}}_{\s}^{e} := G \Bigl[\tensor{F} - J^{-2\nu/(1 - 2 \nu)}\tensor{F}^{-T}\Bigr],
\quad
\hat{\tensor{T}}^{v}_{\s} = 2 \mu_{\s} \tensor{D},
\end{equation}
where $G = \np[Pa\ucdot m]{20}$, $\nu = 0.3$, $\mu_{\s} = \np[kg/(m^2\ucdot s)]{2e-2}$. The mass density of the solid in the reference configuration is $\rho_{\s_{0}} = 0.8 \rho_{\f}$. We add a constant external body force density (gravity) directed downwards:
\begin{equation}
  \label{eq:g test 1}
   \bv{b} :=
     \begin{pmatrix}
         0 \\
       \np[m^{2}/s^2]{-10}
     \end{pmatrix}.
\end{equation}
The initial deformation of the disk is such that its density (in the deformed state) exceeds that of the surrounding fluid.  Under these conditions, the disk would sink.  However, as soon as the disk is released, the disk will expand rapidly and regain a size such that the disk will start floating almost from the start of the motion.  

\subsection{Results}
In Figs.~\ref{fig:pressure test 1} and~\ref{fig:velocity test 1}%
\begin{figure}
  \centering
  \subfigure[$t=0$]{
    \includegraphics[width=.35\textwidth]{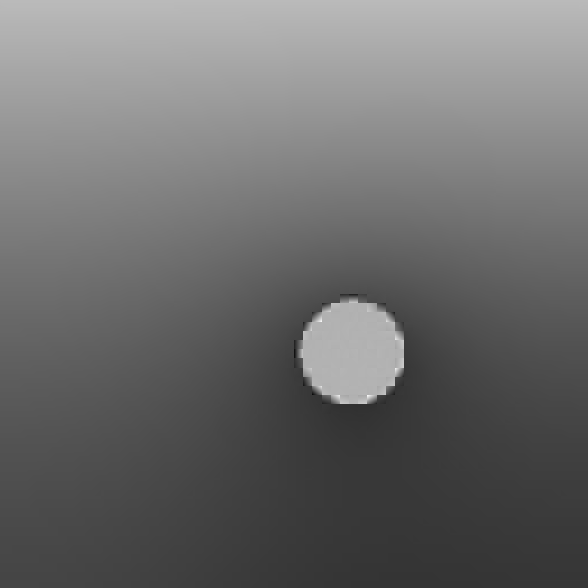}
  }
  \subfigure[$t=.5\,\mathrm{s}$]{
    \includegraphics[width=.35\textwidth]{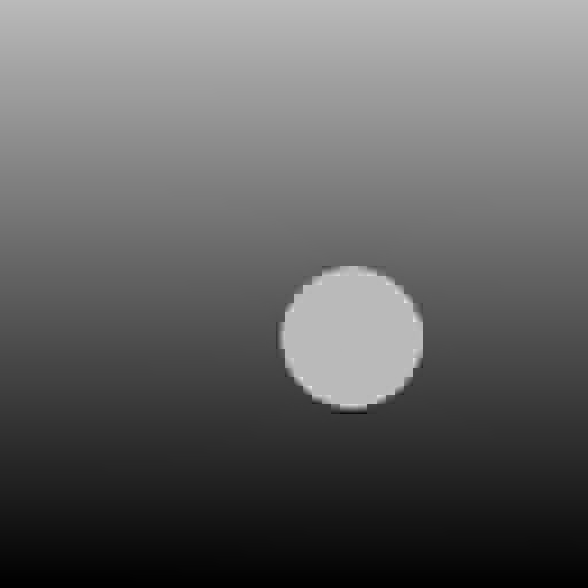}
  }
  \subfigure[$t=1\,\mathrm{s}$]{
    \includegraphics[width=.35\textwidth]{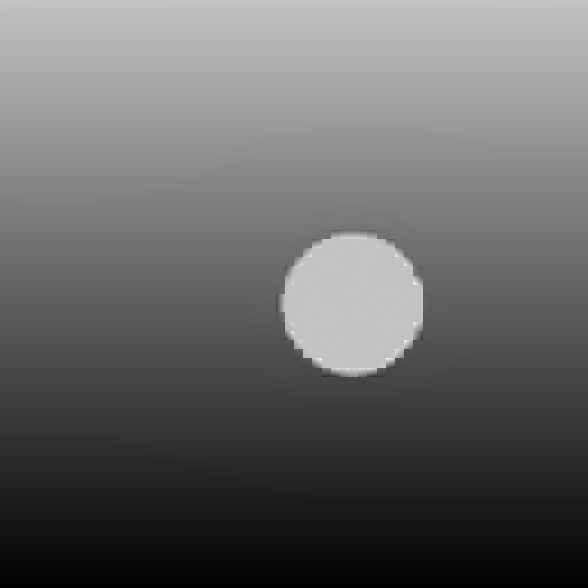}
  }
  \subfigure[$t=1.5\,\mathrm{s}$]{
    \includegraphics[width=.35\textwidth]{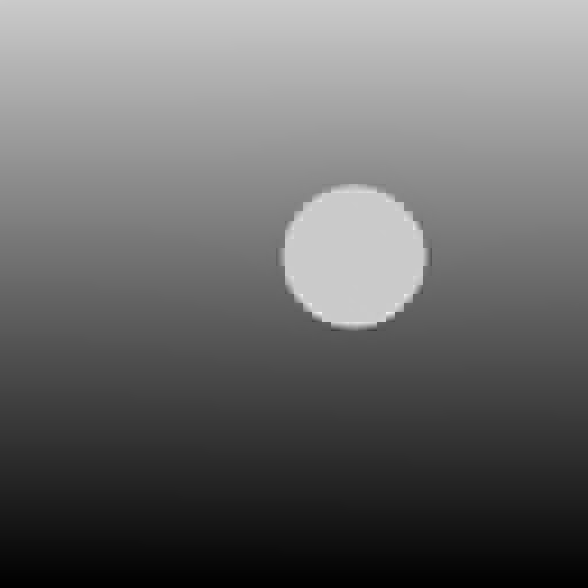}
  }
  \subfigure[$t=2\,\mathrm{s}$]{
    \includegraphics[width=.35\textwidth]{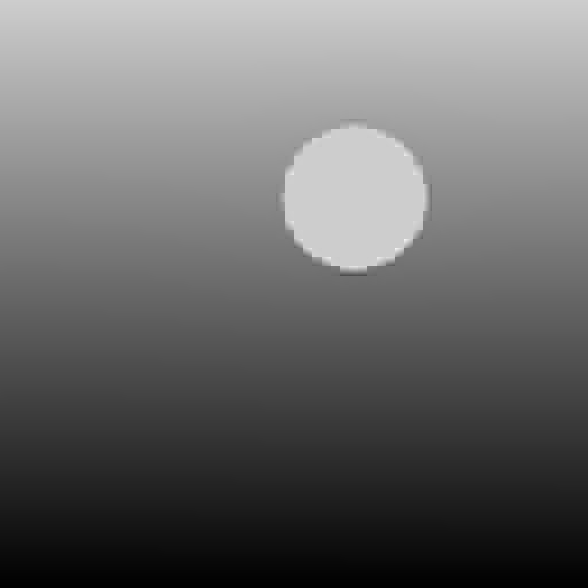}
  }
  \subfigure[$t=2.5\,\mathrm{s}$]{
    \includegraphics[width=.35\textwidth]{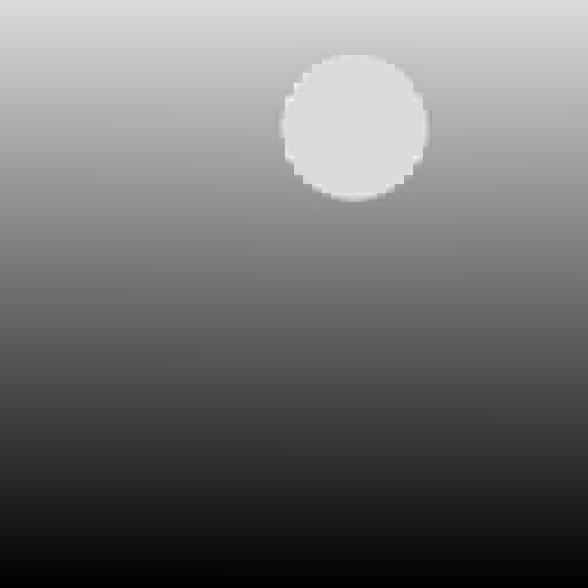}
  }
  \caption{\label{fig:pressure test 1}Pressure evolution.}
\end{figure}
\begin{figure}
  \centering
  \subfigure[$t=0$\label{fig:test 1 expansion state}]{
    \includegraphics[width=.35\textwidth]{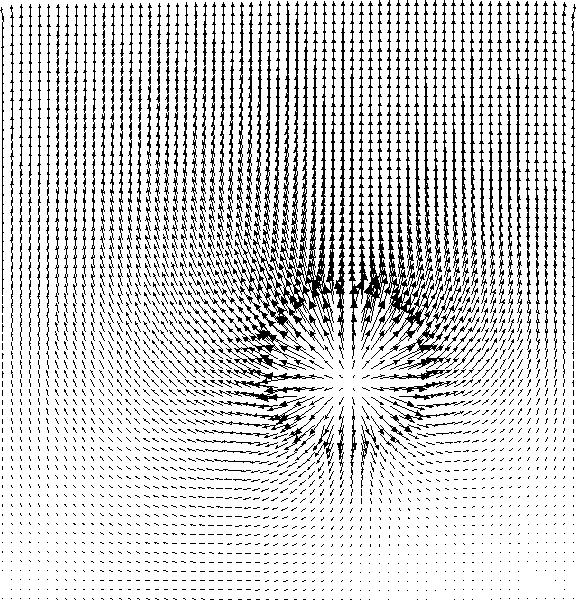}
 }
  \subfigure[$t=.5\,\mathrm{s}$]{
    \includegraphics[width=.35\textwidth]{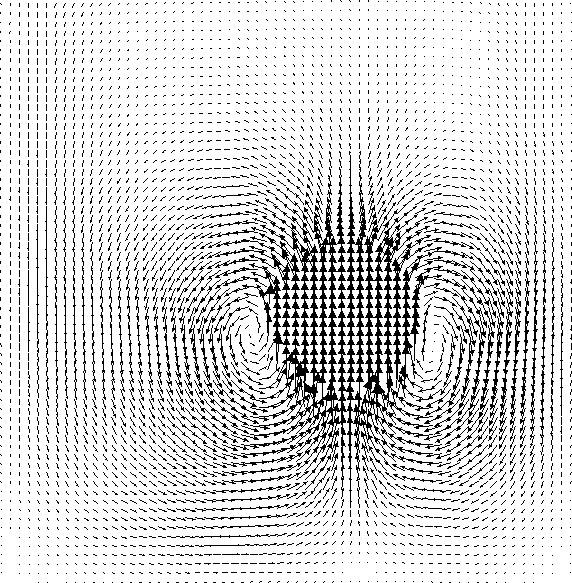}
  }
  \subfigure[$t=1\,\mathrm{s}$]{
    \includegraphics[width=.35\textwidth]{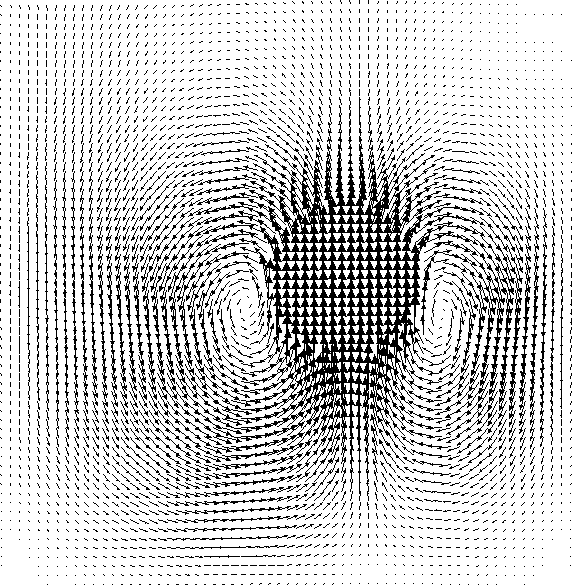}
  }
  \subfigure[$t=1.5\,\mathrm{s}$]{
    \includegraphics[width=.35\textwidth]{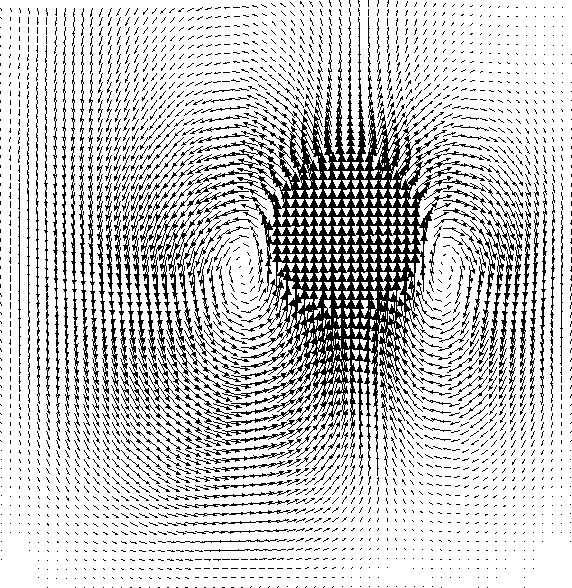}
  }
  \subfigure[$t=2\,\mathrm{s}$]{
    \includegraphics[width=.35\textwidth]{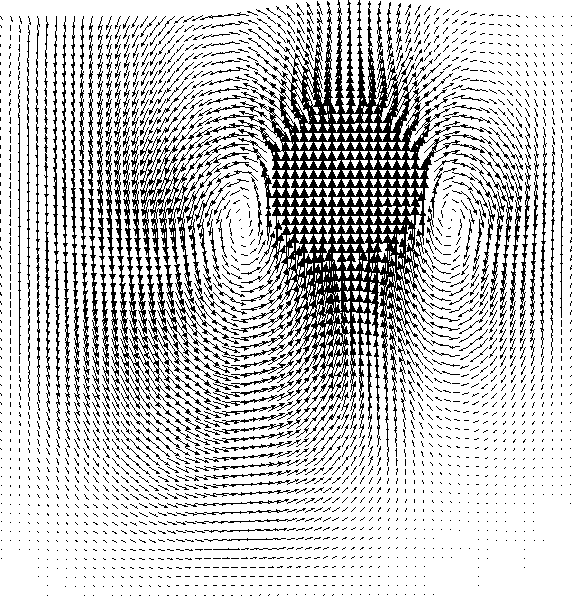}
  }
  \subfigure[$t=2.5\,\mathrm{s}$]{
    \includegraphics[width=.35\textwidth]{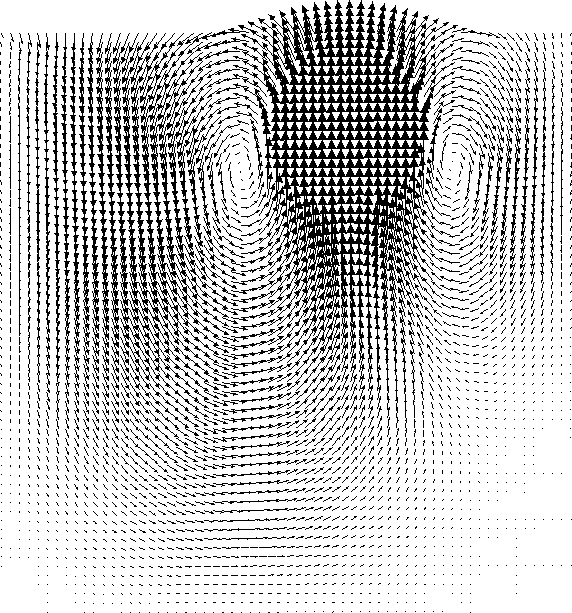}
  }
  \caption{\label{fig:velocity test 1}Velocity evolution.}
\end{figure}
we show the evolution of the pressure and of the velocity fields, respectively. The plots in Fig.~\ref{fig:pressure test 1} show the mean normal stress in both the fluid and the solid, as defined in Eq.~\eqref{eq: Mean normal stress def}. 

In the figures, three different phases can be recognized:
\begin{enumerate}
\item
expansion, for $0 < t < \np[s]{0.3}$; 

\item
contraction and ascension, for $\np[s]{0.3} < t < \np[s]{2}$; 

\item
expansion and rising, for $\np[s]{2} < t < \np[s]{3}$. 
\end{enumerate}
In phase one, the disk tries to reach an equilibrium state of deformation by quickly expanding and pushing the surrounding fluid. Fig.~\ref{fig:test 1 expansion state} shows a snapshot of the velocity field in this phase: an outflow is present at the top of the box, and the radial velocity in the solid shows the role of compressibility in the solid constitutive behavior. Quantitative measurements can be inferred from Fig.~\ref{fig:test 1 flux} displaying the plot of the total instantaneous flux through the Neumann part of the boundary:
\begin{equation}
  \label{eq:total flux}
  F := \int_{\Gamma_N} \bv{u}\cdot\bv{n}\d s.
\end{equation}

In phase two, the solid has reached a positive buoyancy status, and starts moving towards the top of the box. In this phase, the vertical position of the center of mass of the disk (Fig.~\ref{fig:test 1 center of mass}), increases in an approximately quadratic manner with time. The area of the disk bounces back due to inertial effects (lighter line in Fig.~\ref{fig:test 1 area comparison}), and in phase three, it grows again, both for a bouncing effect and because of the reduced pressure applied on the surface of the disk itself.

By tracking the vertical location of the center of mass of the disk (Fig.~\ref{fig:test 1 center of mass}), we observe that the dynamics of the expansion phase are rather fast, and the disk expands to a positive buoyancy state while remaining substantially still. We monitor the consistency of the method by computing the integral in time of the total flux of fluid through the top side, which should equate the area change of the disk (Fig.~\ref{fig:test 1 area comparison}):
\begin{equation}
  \label{eq:area comparison}
  \delta A_{\f} :=
  \int_{0}^{t} F(\tau) \d \tau 
  =
  \int_{0}^{t} \int_{\Gamma_N} \bv{u}(\tau) \cdot \bv{n} \d s \d \tau \approx \int_B J[\bv{w}(t)] \d v - \int_B J[\bv{w}_0] \d v := \delta A_{\s}.
\end{equation}
While the consistency of the method in phase one is quite good, the fluid flux does not seem to compensate accurately for the small changes in area due to the bouncing of the disk in the remaining two phases. This lack of accuracy is likely due to the combination of the errors in the approximation of the divergence free constraint in the fluid, and to the errors in the computation of the velocity coupling between the fluid and the solid. 
\afterpage{\clearpage}
\begin{figure}[htb]
  \subfigure[Fluid flux through the top of the box.]{
    \includegraphics[width=.48\textwidth]{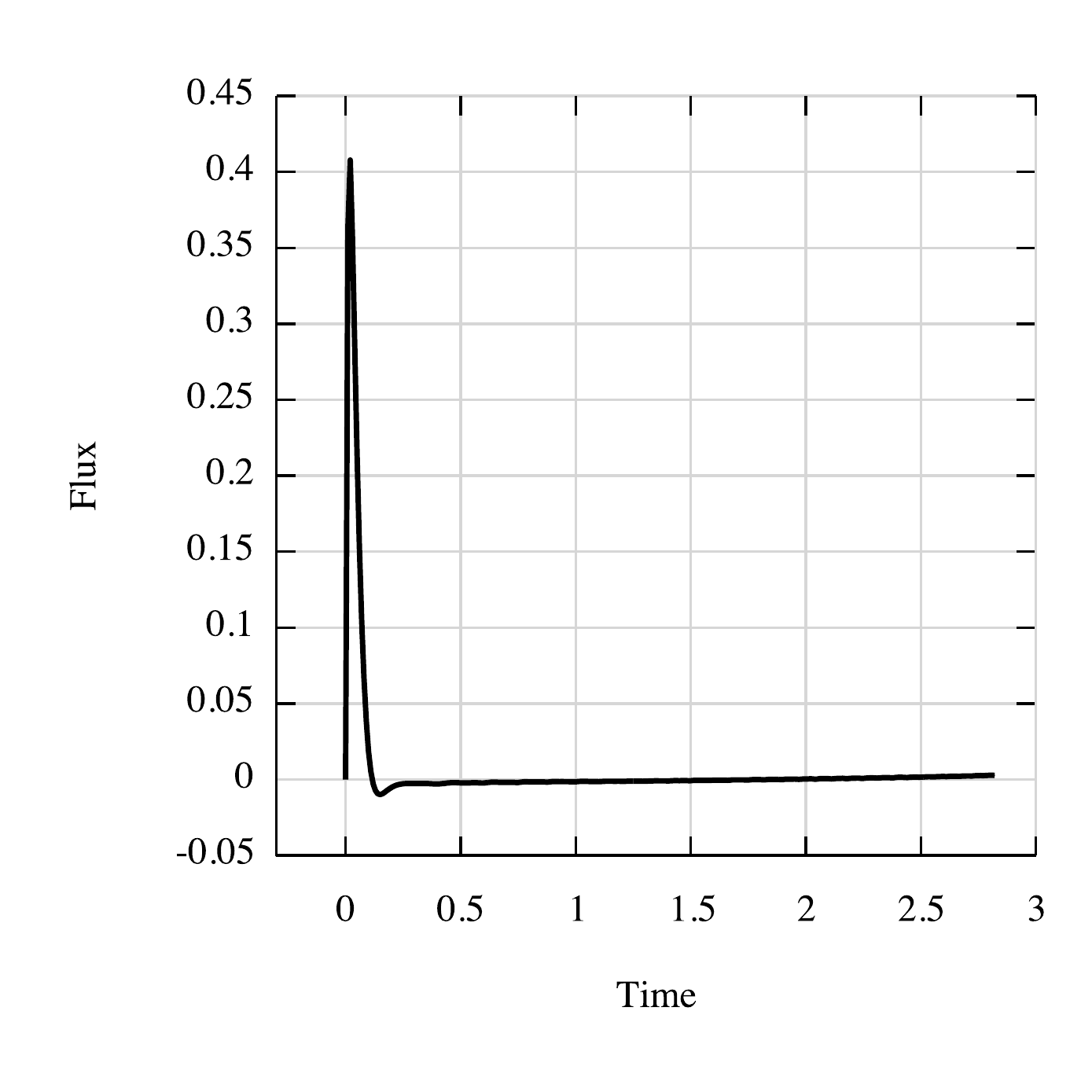}
    \label{fig:test 1 flux}
  }
  \subfigure[Vertical position of the center of mass of the ball.]{
    \includegraphics[width=.48\textwidth]{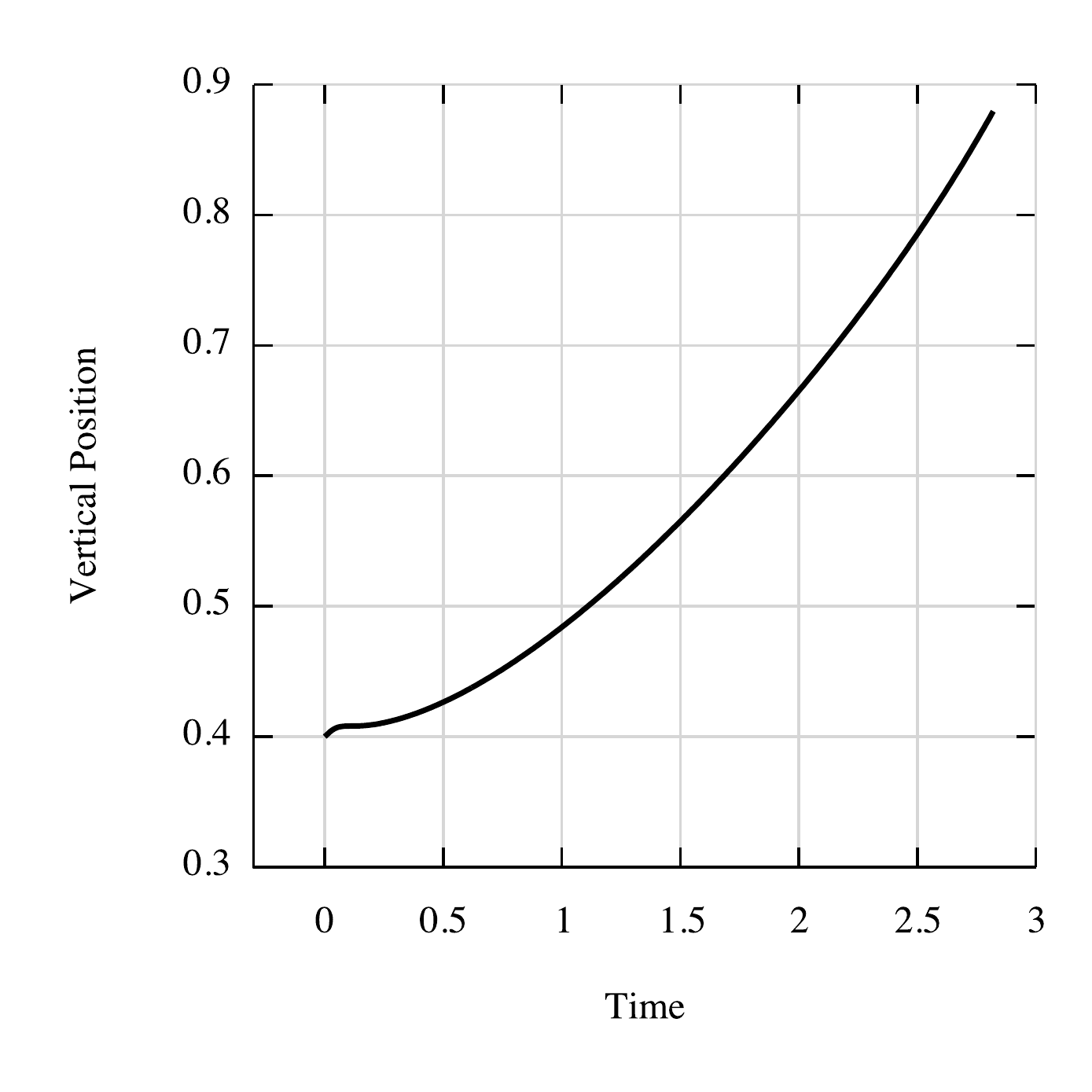}
    \label{fig:test 1 center of mass}
  }
  \subfigure[Area exchange comparison.]{
    \includegraphics[width=.48\textwidth]{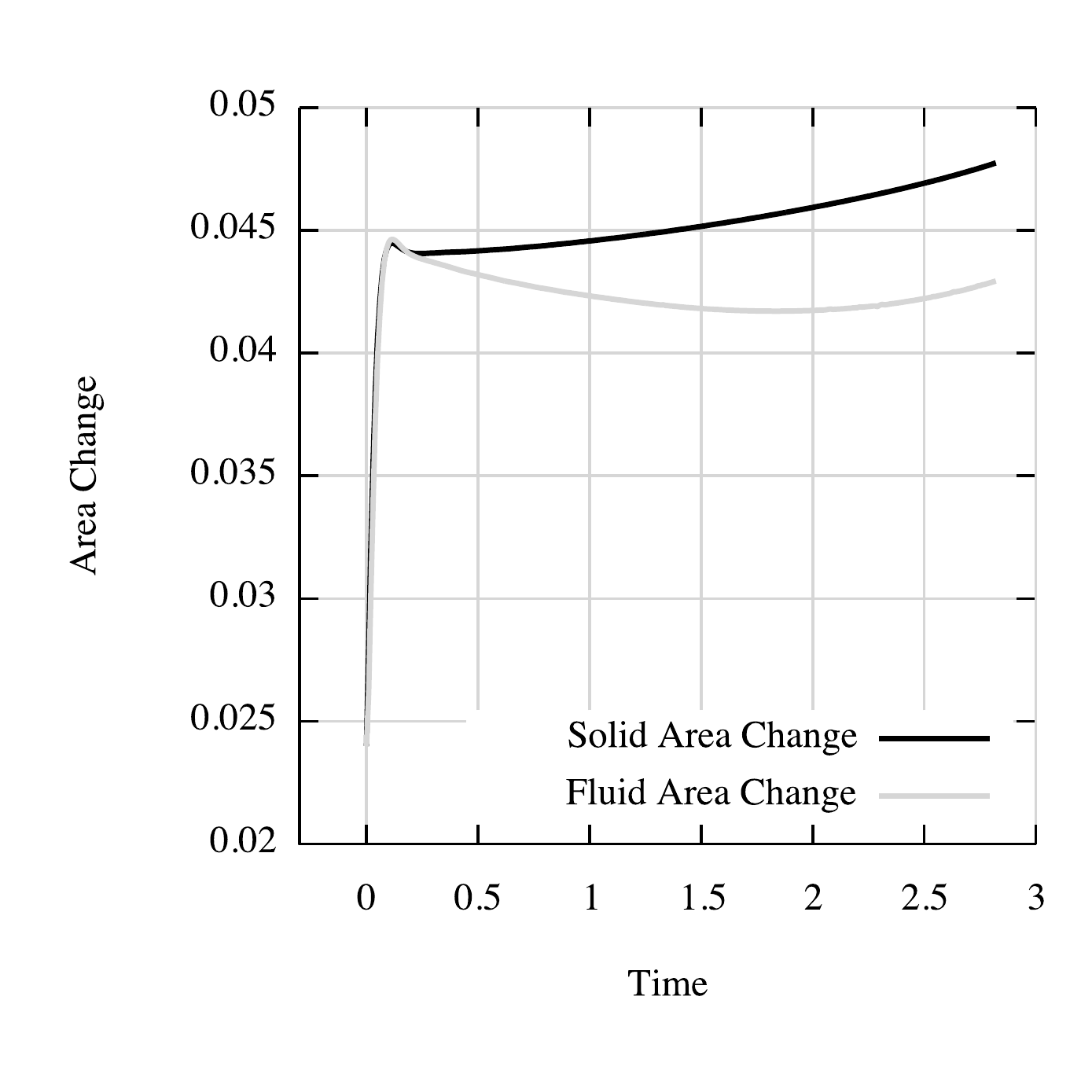}
    \label{fig:test 1 area comparison}
  }
  \subfigure[Area exchange error.]{
    \includegraphics[width=.48\textwidth]{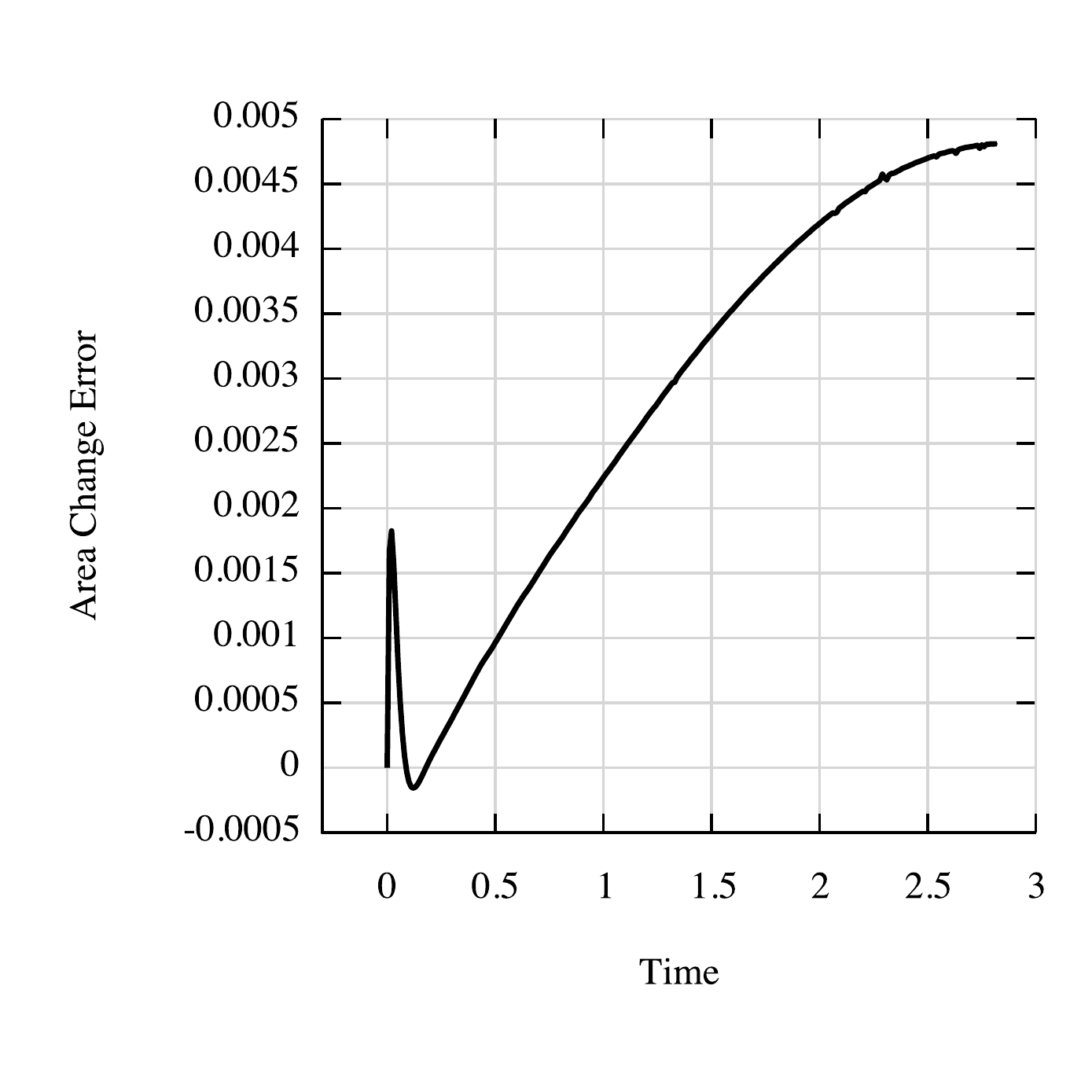}
    \label{fig:test 1 area comparison error}
  }
  \caption{Instantaneous flux, position, and area exchange.}
\end{figure}

\section{Summary and Conclusions}
We presented a fully variational formulation of an immersed method for the solution of \acro{FSI} problems. Like other immersed methods, ours is based on the idea of keeping independent discretizations for the fluid and for the solid domains. The fluid is treated in its natural Eulerian framework, while the solid is modeled using a Lagrangian strategy. Most of the implementations of immersed methods refer to the pioneering work of \cite{Peskin_1977_Numerical_0}, in which a clever reformulation of the continuous coupling between the fluid and the solid domain allows one to construct projection operators between the Lagrangian and the Eulerian framework based on approximated Dirac-$\delta$ distributions.  While the necessity to introduce approximated Dirac-$\delta$ distributions is strongly connected to the particular approximation strategy chosen to discretize the continuous problem (\acro{FD} in the \acro{IBM}), its use has propagated also in the Finite Element community (see, e.g., \citealp{ZhangGerstenberger_2004_Immersed_0}). A variational approach that removed the necessity to approximate the Dirac-$\delta$ distribution has been proposed in \cite{BoffiGastaldi_2003_A-Finite_0}, and later extended in \cite{Heltai_2006_The-Finite_0,BoffiGastaldiHeltai-2007-a,BoffiGastaldiHeltaiPeskin-2008-a,Heltai-2008-a}. 

The formulation we presented extends that of \cite{BoffiGastaldiHeltaiPeskin-2008-a} to general elasticity problems in which the solid and the fluid can have different mass densities.  In addition, the constitutive response function for the solid can be either compressible or incompressible, and either viscoelastic of differential type or purely elastic. The abstract variational formulation we proposed is shown to yield energy estimates that are formally identical to those in the classical context of continuum mechanics.  The numerical approximation we proposed is strongly consistent and stable, with semi-discrete energy estimate that are formally identical to those of the abstract variational formulation and therefore to those in the classical context of continuum mechanics.

We discussed in details the algorithmic strategies for an efficient implementation of the proposed method, and we showed how standard implementations of the finite element method along with some appropriate search algorithm for the determination of the element containing a given point are enough to implement the proposed formulation.

A simple numerical experiment was used to test the novel characteristics of our method. While the results are promising, some work is still necessary to ensure better conservation properties of the method.

\section*{Acknowledgements}
\label{sec:acknowledgements}
The research leading to these results has received specific funding under the ``Young SISSA Scientists' Research Projects'' scheme 2011-2012, promoted by the International School for Advanced Studies (SISSA), Trieste, Italy.

The final version of this paper was much improved thanks to suggestions made by the anonymous reviewers.

\section*{Appendix: Proof of Theorem~\ref{th: immersed in control volume}}
The proof of Theorem~\ref{th: immersed in control volume} is presented at the end of this appendix and is preceded by some useful intermediate results.

The application of Theorem~\ref{th: GTT} to the domains $\Omega$ and $B_{t}$ defined in Section~\ref{subsec: Basic notation and governing equations} yields the following results:
\begin{lemma}[Transport theorem for fixed control volumes and for physical bodies]
\label{lemma: transport th for control volumes and bodies}
Let $\Omega$ and $B_{t}$, with outward unit normals $\bv{m}$ and $\bv{n}$, respectively, be the domains defined in Section~\ref{subsec: Basic notation and governing equations}.  Let $\phi(\bv{x},t)$ and $\xi(\bv{x},t)$ be a smooth field defined over $\Omega$ and $B_{t}$, respectively.  Then we have
\begin{gather}
\label{eq: General transport theorem classic control volume}
\frac{\nsd{}}{\nsd{t}} \int_{\Omega} \phi(\bv{x},t) \d{v} = \int_{\Omega} \frac{\partial \phi(\bv{x},t)}{\partial t} \d{v}
\shortintertext{and}
\label{eq: General transport theorem body}
\frac{\nsd{}}{\nsd{t}} \int_{B_{t}} \xi(\bv{x},t) \d{v} = \int_{B_{t}} \frac{\partial \xi(\bv{x},t)}{\partial t} \d{v} + \int_{\partial B_{t}} \xi(\bv{x},t) \, \bv{u} \cdot \bv{n} \d{a}.
\end{gather}
\end{lemma}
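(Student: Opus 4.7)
The plan is to derive both identities as direct specializations of Theorem~\ref{th: GTT}, the generic transport theorem, by choosing the moving domain $\tilde{\Omega}(t)$ and its boundary velocity $\bv{\nu}$ appropriately for each case. Since Theorem~\ref{th: GTT} is assumed, the work is entirely in identifying $\bv{\nu}$ on the respective boundaries and recognizing which boundary integral is forced to vanish.

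For Eq.~\eqref{eq: General transport theorem classic control volume}, first I would set $\tilde{\Omega}(t) = \Omega$. Since $\Omega$ was introduced in Section~\ref{subsec: Basic notation and governing equations} as a \emph{fixed} control volume, any convenient time parametrization of $\partial\Omega$ has zero velocity, i.e., $\bv{\nu} = \bv{0}$ on $\partial\Omega$. Substituting into Eq.~\eqref{eq: General transport theorem} makes the surface contribution $\int_{\partial\tilde{\Omega}(t)} \phi \, \bv{\nu}\cdot\bv{m} \d{a}$ vanish identically, leaving exactly Eq.~\eqref{eq: General transport theorem classic control volume}.

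For Eq.~\eqref{eq: General transport theorem body}, I would take $\tilde{\Omega}(t) = B_{t}$. Here the key observation is that $B_{t}$ is the current configuration of the immersed body $\mathscr{B}$, so $\partial B_{t}$ is advected by the motion $\bv{\zeta}$: material points on $\partial B$ map to points on $\partial B_{t}$ under the diffeomorphism $\bv{\zeta}(\cdot,t)$. Consequently, the natural time parametrization of $\partial B_{t}$ induced by $\bv{\zeta}$ yields a boundary velocity $\bv{\nu} = \partial \bv{\zeta}/\partial t \big|_{\bv{s}=\bv{\zeta}^{-1}(\bv{x},t)} = \bv{u}(\bv{x},t)$ for $\bv{x}\in\partial B_{t}$, where $\bv{u}$ is the material velocity field defined in Section~\ref{subsec: Basic notation and governing equations}. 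With this choice, and recalling that the outward unit normal to $\partial B_{t}$ is $\bv{n}$, Eq.~\eqref{eq: General transport theorem} specializes directly to Eq.~\eqref{eq: General transport theorem body}.

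Neither specialization presents a real obstacle; the only subtlety worth stating explicitly in the write-up is that the parametrization used in Theorem~\ref{th: GTT} is essentially arbitrary, and the two cases simply correspond to the two most natural choices (the trivial one for the fixed control volume and the material one for the body). Smoothness assumptions on $\phi$ and $\xi$, together with the smoothness of $\bv{\zeta}$ implicit in its being a diffeomorphism, ensure the hypotheses of Theorem~\ref{th: GTT} are satisfied in each case.
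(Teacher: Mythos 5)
Your proposal is correct and follows essentially the same route as the paper: both identities are obtained by specializing Theorem~\ref{th: GTT}, taking $\bv{\nu} = \bv{0}$ on the fixed boundary $\partial\Omega$ and $\bv{\nu} = \bv{u}$ on the materially advected boundary $\partial B_{t}$. Your added remark on the parametrization of $\partial B_{t}$ via the motion $\bv{\zeta}$ simply makes explicit what the paper states more briefly.
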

\begin{proof}[Proof of Lemma~\ref{lemma: transport th for control volumes and bodies}]
The results in Eqs.~\eqref{eq: General transport theorem classic control volume} and~\eqref{eq: General transport theorem body} are well known.  The proof is presented simply to facilitate the discussion of subsequent results.  Since $\Omega$ is a fixed control volume, its boundary is time independent.  Hence, Eq.~\eqref{eq: General transport theorem classic control volume} follows directly from Eq.~\eqref{eq: General transport theorem} when we let $\bv{\nu} = \bv{0}$.  Next, we observe that $B_{t}$ is a time-dependent domain such that the velocity field on the boundary of $B_{t}$ coincides with the material velocity field $\bv{u}$.  Hence, Eq.~\eqref{eq: General transport theorem body} follows from Eq.~\eqref{eq: General transport theorem} when we set $\bv{\nu} = \bv{u}$.
\end{proof}
\begin{lemma}[Transport theorem for $\Omega\setminus B_{t}$]
\label{lemma: TT Omega minus Bt}
Let $\Omega$ and $B_{t}$, with outward unit normals $\bv{m}$ and $\bv{n}$, respectively, be the domains defined in Section~\ref{subsec: Basic notation and governing equations}.  Let $\phi(\bv{x},t)$ be a smooth field defined over the domain $\Omega\setminus B_{t}$. Then we have
\begin{equation}
\label{eq: General transport theorem immersed context}
\frac{\nsd{}}{\nsd{t}} \int_{\Omega\setminus B_{t}} \phi(\bv{x},t) \d{v} = \int_{\Omega\setminus B_{t}} \frac{\partial \phi(\bv{x},t)}{\partial t} \d{v} - \int_{\partial B_{t}} \phi(\bv{x},t) \, \bv{u} \cdot \bv{n} \d{a}.
\end{equation}
\end{lemma}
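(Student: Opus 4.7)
My plan is to derive the result as a direct application of Theorem~\ref{th: GTT} to the time-dependent domain $\tilde\Omega(t) := \Omega\setminus B_{t}$, which is a regular, possibly multiply connected, time-dependent domain since $B_{t}$ is compactly contained in $\Omega$ and $\Omega$ is a fixed control volume. The entire proof reduces to correctly identifying the boundary of $\tilde\Omega(t)$, its outward unit normal, and the relevant parametrizing velocity, and then substituting these into Eq.~\eqref{eq: General transport theorem}.

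The boundary of $\tilde\Omega(t)$ decomposes as the disjoint union $\partial\tilde\Omega(t) = \partial\Omega \cup \partial B_{t}$. The first step is to observe that on $\partial\Omega$ the outward unit normal to $\tilde\Omega(t)$ is $\bv{m}$ itself, and the boundary is time-independent so one may take the parametrizing velocity $\bv{\nu}$ on this portion to be $\bv{0}$. The second and crucial step is to recognize that on $\partial B_{t}$ the outward unit normal with respect to $\tilde\Omega(t)$ is $-\bv{n}$ (because $\bv{n}$ points out of $B_{t}$ and therefore into $\tilde\Omega(t)$), while the natural parametrizing velocity for $\partial B_{t}$ is the material velocity $\bv{u}$, since $\partial B_{t}$ is the image at time $t$ of a material surface transported by the motion $\bv{\zeta}$.

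Substituting these choices into Eq.~\eqref{eq: General transport theorem}, the boundary integral splits as
\begin{equation*}
\int_{\partial\tilde\Omega(t)} \phi \, \bv{\nu} \cdot \bv{m}_{\mathrm{ext}} \d{a}
= \int_{\partial\Omega} \phi \, \bv{0} \cdot \bv{m} \d{a} + \int_{\partial B_{t}} \phi \, \bv{u} \cdot (-\bv{n}) \d{a}
= -\int_{\partial B_{t}} \phi \, \bv{u} \cdot \bv{n} \d{a},
\end{equation*}
which, together with the bulk term $\int_{\tilde\Omega(t)} \partial\phi/\partial t \d{v}$ supplied by Theorem~\ref{th: GTT}, is precisely Eq.~\eqref{eq: General transport theorem immersed context}. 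A cross-check by additivity is instructive: writing $\int_{\Omega}\phi \d v = \int_{\Omega\setminus B_{t}}\phi \d v + \int_{B_{t}}\phi \d v$ (after extending $\phi$ smoothly across $\partial B_{t}$, which does not affect the claim), differentiating, and applying Eqs.~\eqref{eq: General transport theorem classic control volume} and~\eqref{eq: General transport theorem body} from Lemma~\ref{lemma: transport th for control volumes and bodies} yields the same identity.

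The only subtlety, and therefore the main (though modest) obstacle, is the bookkeeping of the sign of the normal on $\partial B_{t}$; once this orientation is correctly accounted for, the result is immediate. No smoothness issues arise beyond those already assumed in Theorem~\ref{th: GTT}, because $\partial B_{t}$ is regular by hypothesis and the restrictions of $\bv{u}$ and $\phi$ to $\partial B_{t}$ inherit the regularity assumed in Section~\ref{subsec: Basic notation and governing equations}.
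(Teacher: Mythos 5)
Your proof is correct and follows essentially the same route as the paper's: both apply Theorem~\ref{th: GTT} to $\Omega\setminus B_{t}$, noting that $\partial(\Omega\setminus B_{t}) = \partial\Omega \cup \partial B_{t}$, taking $\bv{\nu}=\bv{0}$ with outward normal $\bv{m}$ on $\partial\Omega$ and $\bv{\nu}=\bv{u}$ with outward normal $-\bv{n}$ on $\partial B_{t}$. The additivity cross-check you include is a harmless extra not present in the paper.
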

\begin{proof}[Proof of Lemma~\ref{lemma: TT Omega minus Bt}]
We observe that
\begin{equation}
\label{eq: boundary of Omega minus Bt}
\partial(\Omega\setminus B_{t}) = \partial\Omega \cup \partial B_{t}.
\end{equation}
The unit normals outward relative to $\Omega\setminus B_{t}$ on $\partial\Omega$ and $\partial B_{t}$ are $\bv{m}$ and $-\bv{n}$, respectively.  Finally, the velocity field of $\partial\Omega$ is null whereas the velocity field on $\partial B_{t}$ is equal to the material velocity field $\bv{u}$ (on $\partial B_{t}$).  Then the result in Eq.~\eqref{eq: General transport theorem immersed context} follows directly from Eq.~\eqref{eq: General transport theorem} by setting $\bv{\nu} = \bv{0}$ on $\partial\Omega$ and $\bv{\nu} = \bv{u}$ on $\partial B_{t}$.
\end{proof}

The coordinated application of the transport theorems with the balance of mass and the concept of material time derivative yields results that are useful in the derivation of energy estimates.  If $\phi(\bv{x},t)$ is the Eulerian description of a scalar-valued physical quantity, then the material time derivative of $\phi$ is
\begin{equation}
\label{eq: material time derivative}
\dot{\phi} = \frac{\partial \phi}{\partial t} + \grad\phi \cdot \bv{u},
\end{equation}
where we recall that $\bv{u}(\bv{x},t)$ is the Eulerian description of the material velocity field.  We now consider the case in which $\phi$ is a density per unit volume with corresponding density per unit mass $\psi$, so that $\phi(\bv{x},t) = \rho(\bv{x},t) \psi(\bv{x},t)$, where $\rho(\bv{x},t)$ is Eulerian description of the mass density distribution. Then,  Eq.~\eqref{eq: material time derivative} gives
\begin{equation}
\label{eq: rho psi ppt}
\dot{\rho} \psi + \rho \dot{\psi} = \frac{\partial (\rho \psi)}{\partial t} + \grad(\rho \psi) \cdot \bv{u}
\quad \Rightarrow \quad
\frac{\partial (\rho \psi)}{\partial t} = \dot{\rho} \psi + \rho \dot{\psi} - \grad(\rho \psi) \cdot \bv{u}.
\end{equation}
Using Eq.~\eqref{eq: Balance of mass} and recalling that $\ldiv(\phi \bv{u}) = \grad\phi \cdot \bv{u} + \phi \ldiv \bv{u}$, the last of Eqs.~\eqref{eq: rho psi ppt} becomes
\begin{equation}
\label{eq: D/DT pd/pdt BM relation}
\frac{\partial (\rho \psi)}{\partial t} = \rho \dot{\psi} - \ldiv(\rho \psi \bv{u}).
\end{equation}
\begin{lemma}[Transport theorems for densities per unit mass]
\label{lemma: transport with mass densities}
Let $\Omega$ and $B_{t}$ be the domains defined in Section~\ref{subsec: Basic notation and governing equations}.  Let $\psi_{B_{t}}(\bv{x},t)$ and $\psi_{\Omega\setminus B_{t}}(\bv{x},t)$ be the Eulerian descriptions of sufficiently smooth scalar-valued physical density per unit mass defined over $B_{t}$, and $\Omega\setminus B_{t}$, respectively.  Then, Theorem~\ref{th: GTT} and the principle of balance of mass in Eq.~\eqref{eq: Balance of mass} imply
\begin{gather}
\label{eq: TTMB B}
\frac{\nsd{}}{\nsd{t}} \int_{B_{t}} \rho \psi_{B_{t}} \d{v}
=
\int_{B_{t}} \rho \dot{\psi}_{B_{t}} \d{v},
\shortintertext{and}
\label{eq: TTMB ID}
\frac{\nsd{}}{\nsd{t}} \int_{\Omega\setminus B_{t}} \rho \psi_{\Omega\setminus B_{t}} \d{v} + \int_{\partial\Omega} \rho \psi_{\Omega\setminus B_{t}} \bv{u} \cdot \bv{m} \d{a}
=
\int_{\Omega\setminus B_{t}} \rho \dot{\psi}_{\Omega\setminus B_{t}} \d{v}.
\end{gather}
\end{lemma}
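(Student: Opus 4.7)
The plan is to derive both identities by applying the transport theorems already established in Lemma~\ref{lemma: transport th for control volumes and bodies} (and Lemma~\ref{lemma: TT Omega minus Bt}) to the composite field $\rho\psi$, then using Eq.~\eqref{eq: D/DT pd/pdt BM relation} to convert the resulting partial time derivative into a material time derivative plus a divergence, and finally absorbing the divergence via the divergence theorem so that it cancels against a surface flux term.

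First I would prove Eq.~\eqref{eq: TTMB B}. Setting $\xi = \rho\psi_{B_{t}}$ in Eq.~\eqref{eq: General transport theorem body} gives
\begin{equation*}
\frac{\nsd{}}{\nsd{t}} \int_{B_{t}} \rho \psi_{B_{t}} \d{v} = \int_{B_{t}} \frac{\partial (\rho\psi_{B_{t}})}{\partial t} \d{v} + \int_{\partial B_{t}} \rho\psi_{B_{t}} \, \bv{u}\cdot\bv{n} \d{a}.
\end{equation*}
Invoking Eq.~\eqref{eq: D/DT pd/pdt BM relation}, the integrand of the first term on the right becomes $\rho\dot{\psi}_{B_{t}} - \ldiv(\rho\psi_{B_{t}}\bv{u})$. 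The divergence theorem applied on $B_{t}$ (whose outward normal is $\bv{n}$) turns $\int_{B_{t}}\ldiv(\rho\psi_{B_{t}}\bv{u})\d{v}$ into $\int_{\partial B_{t}}\rho\psi_{B_{t}}\bv{u}\cdot\bv{n}\d{a}$, which exactly cancels the surface contribution above, leaving Eq.~\eqref{eq: TTMB B}.

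The proof of Eq.~\eqref{eq: TTMB ID} proceeds along identical lines, with two small bookkeeping adjustments. Choosing $\phi = \rho\psi_{\Omega\setminus B_{t}}$ in Eq.~\eqref{eq: General transport theorem immersed context} yields
\begin{equation*}
\frac{\nsd{}}{\nsd{t}}\int_{\Omega\setminus B_{t}} \rho\psi_{\Omega\setminus B_{t}} \d{v} = \int_{\Omega\setminus B_{t}} \frac{\partial(\rho\psi_{\Omega\setminus B_{t}})}{\partial t}\d{v} - \int_{\partial B_{t}} \rho\psi_{\Omega\setminus B_{t}} \, \bv{u}\cdot\bv{n}\d{a}.
\end{equation*}
Applying Eq.~\eqref{eq: D/DT pd/pdt BM relation} again, and then the divergence theorem on $\Omega\setminus B_{t}$, whose oriented boundary is $\partial\Omega$ (outward normal $\bv{m}$) together with $\partial B_{t}$ (outward normal $-\bv{n}$), I obtain
\begin{equation*}
\int_{\Omega\setminus B_{t}}\ldiv(\rho\psi_{\Omega\setminus B_{t}}\bv{u})\d{v} = \int_{\partial\Omega}\rho\psi_{\Omega\setminus B_{t}}\bv{u}\cdot\bv{m}\d{a} - \int_{\partial B_{t}}\rho\psi_{\Omega\setminus B_{t}}\bv{u}\cdot\bv{n}\d{a}.
\end{equation*}
Substituting back, the two $\partial B_{t}$ contributions cancel and the $\partial\Omega$ contribution moves to the left-hand side, producing Eq.~\eqref{eq: TTMB ID}.

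There is no real obstacle here; the argument is a bookkeeping exercise in which the only point requiring care is the orientation of $\partial B_{t}$ as part of $\partial(\Omega\setminus B_{t})$. Getting that sign right is what guarantees that the $\partial B_{t}$ flux terms cancel rather than double up, and it is also what explains why no $\partial B_{t}$ flux survives in the final identities even though $\rho\psi$ may well be discontinuous across $\partial B_{t}$. Notice that the smoothness assumption on $\psi_{B_{t}}$ and $\psi_{\Omega\setminus B_{t}}$ is needed precisely to legitimize the divergence-theorem step on each subdomain separately.
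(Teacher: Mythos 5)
Your proof is correct and follows exactly the paper's argument: set $\xi = \rho\psi_{B_{t}}$ (resp.\ $\phi = \rho\psi_{\Omega\setminus B_{t}}$) in Eq.~\eqref{eq: General transport theorem body} (resp.\ Eq.~\eqref{eq: General transport theorem immersed context}), apply Eq.~\eqref{eq: D/DT pd/pdt BM relation}, and use the divergence theorem so the $\partial B_{t}$ flux terms cancel. You merely spell out the sign bookkeeping on $\partial(\Omega\setminus B_{t})$ that the paper leaves implicit, and you do so correctly.
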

\begin{proof}[Proof of Lemma~\ref{lemma: transport with mass densities}]
Equation~\eqref{eq: TTMB B} is a well known result that can be found in many textbooks (see, e.g., \citealp{GurtinFried_2010_The-Mechanics_0}).  It is obtained by setting $\xi = \rho \psi_{B_{t}}$ in Eq.~\eqref{eq: General transport theorem body} and then using Eq.~\eqref{eq: D/DT pd/pdt BM relation} along with the divergence theorem.  This same strategy can be used to obtain Eq.~\eqref{eq: TTMB ID}, that is, substituting $\rho\psi_{\Omega\setminus B_{t}}$ in place of $\phi$ in Eq.~\eqref{eq: General transport theorem immersed context} and then Eq.~\eqref{eq: D/DT pd/pdt BM relation} along with the divergence theorem.
\end{proof}

\begin{proof}[Proof of Theorem~\ref{th: immersed in control volume}]
Equation~\eqref{eq: TT BM Omega and Bt} is obtained by summing Eqs.~\eqref{eq: TTMB B} and~\eqref{eq: TTMB ID}, where $\psi_{B_{t}}$ and $\psi_{\Omega\setminus B_{t}}$ are taken as the restrictions of $\psi$ to $B_{t}$ and $\Omega\setminus B_{t}$, respectively.
\end{proof}

}


\end{document}